\DeclareFontFamily{OT1}{rsfs}{}
\DeclareFontShape{OT1}{rsfs}{n}{it}{<-> rsfs10}{}
\DeclareMathAlphabet{\mathscr}{OT1}{rsfs}{n}{it}
\DeclareMathOperator{\Ext}{Ext}
\DeclareMathOperator{\Hom}{Hom}
\DeclareMathOperator{\Alg}{Alg}
\DeclareMathOperator{\GL}{GL}
\DeclareMathOperator{\SL}{SL}
\DeclareMathOperator{\G}{G}
\DeclareMathOperator{\M}{M}
\DeclareMathOperator{\V}{V}
\DeclareMathOperator{\St}{St}
\DeclareMathOperator{\Wh}{Wh}
\DeclareMathOperator{\cind}{c-ind}
\DeclareMathOperator{\Ind}{Ind}
\DeclareMathOperator{\csupp}{csupp}
\DeclareMathOperator{\Sym}{Sym}
\DeclareMathOperator{\Irr}{Irr}
\DeclareMathOperator{\lev}{lev}
\DeclareMathOperator{\mat}{Mat}
\newcommand{\btimes}{{\bar{\times}}}
\newcommand{\ropp}{{\bar{r}}}
\newcommand{\csuppline}{{\csupp_{\mathbb{Z}}}}
\newcommand{\R}{{\mathfrak{R}}}
\newcommand{\A}{{\mathcal{A}}}
\newtheorem{theorem}{Theorem}[section]
\newtheorem{proposition}{Proposition}[section]
\newtheorem{corollary}{Corollary}[section]
\theoremstyle{definition}
\newtheorem{remark}{Remark}[section]
\theoremstyle{definition}
\newtheorem{lemma}{Lemma}[section]
\newtheorem{definition}{Definition}[section]
\theoremstyle{definition}
\newtheorem{conjecture}[theorem]{Conjecture}
\theoremstyle{definition}
\numberwithin{equation}{section}
\begin{document}

\title{Non-tempered Ext branching laws for the $p$-adic general linear group}
\author{Mohammed Saad Qadri}
\address{Department of Mathematics \\ Indian Institute of Technology Bombay \\ Mumbai \\ India}
\email{185090012@iitb.ac.in}
\date{}

\maketitle

\begin{abstract}
Let $F$ be a non-archimedean local field. Let $\pi_1$ and $\pi_2$ be irreducible Arthur type representations of $\GL_n(F)$ and $\GL_{n-1}(F)$ respectively. We study Ext branching laws when $\pi_1$ and $\pi_2$ are products of discrete series representations and their Aubert-Zelevinsky duals. We obtain an Ext analogue of the local non-tempered Gan-Gross-Prasad conjecture in this case.
\end{abstract}

\section{Introduction}

Let $F$ be a non-archimedean local field. Following the work \cite{pr18} of Dipendra Prasad, there has been much interest in the study of Ext branching laws about the restriction of an irreducible representation of $\GL_{n}(F)$ to $\GL_{n-1}(F)$. 

Let $\pi_1$ and $\pi_2$ be smooth irreducible representations of $\GL_{n}(F)$ and $\GL_{n-1}(F)$ respectively. It is a fundamental result due to Aizenbud-Gourevitch-Rallis-Schiffman \cite{agrs10} that the space $\Hom_{\GL_{n-1}(F)}(\pi_1,\pi_2)$ is at most one dimensional. It is also known that for all integers $i\geq 1$, the spaces $\Ext^i_{\GL_{n-1}(F)}(\pi_1,\pi_2)$ are finite dimensional (see \cite{as20}, \cite{pr18}). By the study of $\Ext$ branching laws we mean understanding when the spaces $\Ext^i_{\GL_{n-1}(F)}(\pi_1,\pi_2)$ are non-zero. If $\pi_1$ and $\pi_2$ are generic then it is a classical fact (see \cite[Theorem 3]{pr93}, \cite{jpss83}) that $\Hom_{\GL_{n-1}(F)}(\pi_1,\pi_2)=\mathbb{C}$. For the corresponding $\Ext$ branching law, it was conjectured by Dipendra Prasad that when $\pi_1$ and $\pi_2$ are generic then the higher $\Ext$ spaces vanish, that is, $\Ext^i_{\GL_{n-1}(F)}(\pi_1,\pi_2)=0$ for all integers $i\geq 1$. This has been recently proved by K.Y. Chan and G. Savin (see \cite{cs21}). 

The local Gan-Gross-Prasad conjectures \cite{ggp11} were formulated by Wee Teck Gan, Benedict Gross, and Dipendra Prasad giving rules for branching laws of certain pairs $(G,H)$ of classical groups. These conjectures predict when a generic irreducible representation of $H$ occurs in the quotient of a generic irreducible representation of $G$ upon restriction. In a recent paper \cite{ggp20}, they extended these conjectures beyond the generic case to the class of representations of \textit{Arthur type}. For the $p$-adic general linear group, one direction of the local non-generic Gan-Gross-Prasad conjecture was proved by Maxim Gurevich \cite{gu21} and the conjecture was completely settled by K.Y. Chan in the work \cite{ch22}. In a recent work of K.Y. Chan (see \cite{ch23generalbranchinglaw}) the general smooth branching problem for the $p$-adic general linear group is resolved. It is now natural to consider the problem of formulating an $\Ext$ branching law, that is, classifying irreducible representations $\pi_1$ of $\GL_n(F)$ and $\pi_2$ of $\GL_{n-1}(F)$ such that $\Ext^i_{\GL_{n-1}(F)}(\pi_1,\pi_2)\neq 0$ for some integer $i\geq 0$. As pointed out earlier, the $\Ext$ branching for the general linear group is known when both the representations $\pi_1$ and $\pi_2$ are generic, in which case $\Ext$ branching reduces to just $\Hom$ branching.

In Remark 5.8 of the work \cite{ggp20}, the authors mention that it would be an interesting question to give a precise condition predicting exactly when $\Ext^i_{\GL_{n-1}(F)}(\pi_1,\pi_2)$ is non-zero for some $i\geq 0$, when $\pi_1$ and $\pi_2$ are representations of Arthur type. This seems to be a difficult problem in general. In this paper, we study Ext branching laws when $\pi_1$ and $\pi_2$ are products of discrete series representations and the Aubert-Zelevinsky duals of discrete series representations, as a first step towards answering this question. We obtain an Ext analogue of the local non-tempered Gan-Gross-Prasad conjecture in this case. Note that when either $\pi_1$ or $\pi_2$ is generic then some results on $\Ext$ branching law have been obtained in the work \cite{ch22} of K.Y. Chan and this paper builds on the work \cite{ch22}.

We introduce the notion of \textit{strong} $\Ext$ relevance for pairs of Arthur parameters. In order to motivate our definition we recall the original notion of relevance for Arthur type representations of $\GL_n(F)$. We first recall the definition of an Arthur parameter for $\GL_n(F)$ introduced by Gan-Gross-Prasad in \cite{ggp20}. Let $W_F$ denote the Weil group of the field $F$. An Arthur parameter of $\GL_n(F)$ is an admissible homomorphism, \[ \psi: W_F \times \SL_2(\mathbb{C}) \times \SL_2(\mathbb{C}) \rightarrow \GL_n(\mathbb{C}) \] such that the restriction of $\psi$ to $W_F$ has bounded image and the restriction to the two $\SL_2(\mathbb{C})$ factors are algebraic. The first $\SL_2(\mathbb{C})$ is called the Deligne $\SL_2$ and the second $\SL_2(\mathbb{C})$ is called the Arthur $\SL_2$. We can obtain an L-parameter from an Arthur parameter by sending the Weil-Deligne group $W_F \times \SL_2(\mathbb{C})$ to $W_F \times \SL_2(\mathbb{C}) \times \SL_2(\mathbb{C})$ via the map, \[ \alpha: (w,g) \rightarrow \left(w,g,\begin{pmatrix}
    |w|^{1/2} & 0 \\
    0 & |w|^{-1/2}
\end{pmatrix}\right) \] where, $w\in W_F$ and $g\in \SL_2(\mathbb{C})$.
Given an Arthur parameter $\psi$, we obtain the corresponding L-parameter $\psi\circ \alpha$. Thus by the local Langlands correspondence, we can associate an irreducible representation of $\GL_n(F)$ to any Arthur parameter, which turns out to be unitary. An irreducible representation of $\GL_n(F)$ obtained in this manner is called a representation of Arthur type. 

Let $m, n \in \mathbb{Z}_{\geq 0}$ and let $\pi_1$ and $\pi_2$ be Arthur type representations of $\GL_m(F)$ and $\GL_n(F)$ respectively. Let $\A(\pi_1)$ and $\A(\pi_2)$ denote their respective Arthur parameters. Let $V_d$ denote the unique irreducible representation of $\SL_2(\mathbb{C})$ of dimension $d$, that is, $V_d = \Sym^{d-1}(\mathbb{C}^2)$; we define $V_0 = \{0\}$. In the work \cite{ggp20}, the authors define a pair ($\pi_1$,$\pi_2$) to be relevant if there exist admissible homomorphisms $\{\phi_i\}_{i=1}^{i=r+s}$ of $W_F$ with bounded image and positive integers $a_1,a_2,\cdots a_r, b_{r+1},b_2,$ $\cdots b_{r+s},$ $ c_1,c_2,\cdots c_{r+s}$ such that,
\[ \A(\pi_1) = \sum_{i=1}^{r} \phi_i \otimes V_{c_i} \otimes V_{a_i} \oplus \sum_{i=r+1}^{r+s} \phi_i \otimes V_{c_i} \otimes V_{b_i - 1} \] and, \[ \A(\pi_2) = \sum_{i=1}^{r} \phi_i \otimes V_{c_i} \otimes V_{a_i-1} \oplus \sum_{i=r+1}^{r+s} \phi_i \otimes V_{c_i} \otimes V_{b_i}. \]

It is now known by the work of K.Y. Chan (see \cite[Theorem 1.3]{ch22}) that the above relevance condition controls the branching law for representations of Arthur type.

\begin{theorem} \cite[Theorem 1.3]{ch22}
Suppose that $\pi_1$ and $\pi_2$ are irreducible Arthur type representations of $\GL_n(F)$ and $\GL_{n-1}(F)$ respectively. Then $\Hom_{\GL_{n-1}(F)}(\pi_1,\pi_2)\neq 0$ if and only if the Arthur parameters of the representations $\pi_1$ and $\pi_2$ are relevant.    
\end{theorem}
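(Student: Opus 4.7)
The plan is to prove the biconditional by analyzing Bernstein-Zelevinsky derivatives and reducing to a combinatorial statement about Arthur parameters. First, recall that under the Zelevinsky classification every irreducible Arthur type representation of $\GL_n(F)$ corresponds to a multisegment built from Speh representations $\mathrm{Sp}(\Delta,a)$, where the segment $\Delta$ and the integer $a$ encode the data of an irreducible summand $\phi_i \otimes V_{c_i} \otimes V_{a_i}$ of the Arthur parameter $\A(\pi_1)$. Aubert-Zelevinsky duality exchanges the two $\SL_2(\mathbb{C})$ factors, so the class of Arthur type representations is stable under this involution, and the relevance condition in the statement is visibly symmetric under swapping the two families of summands combined with this duality.

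For the ``if'' direction (Gurevich's direction in \cite{gu21}), given relevant parameters as in the statement, I would construct an explicit nonzero element of $\Hom_{\GL_{n-1}(F)}(\pi_1,\pi_2)$. Each pair $\phi_i \otimes V_{c_i} \otimes V_{a_i}$ and $\phi_i \otimes V_{c_i} \otimes V_{a_i-1}$ encodes a Speh representation together with its principal Arthur derivative, with one $\SL_2$-dimension removed. Using the Leibniz rule for Bernstein-Zelevinsky derivatives applied to a parabolically induced presentation of $\pi_1$ as a product of Speh and shifted-Speh constituents, one peels off these boxes successively and realizes $\pi_2$ as a $\GL_{n-1}(F)$-quotient of $\pi_1|_{P_n}$ through the Bernstein-Zelevinsky filtration. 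The last $s$ summands, which involve Aubert-Zelevinsky duals, are handled by the dual argument after applying the involution on both sides.

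For the ``only if'' direction, which is the main step, I would argue by induction on $n$ using the Hom-derivative framework of \cite{ch22}. Given $\Hom_{\GL_{n-1}(F)}(\pi_1,\pi_2)\neq 0$, choose a cuspidal $\rho$ such that a suitable right Bernstein-Zelevinsky derivative $D_\rho^{\mathrm{right}}(\pi_2)$ is nonzero; the key matching lemma (the ``left-right Hom-derivative compatibility'') produces a compatible left derivative $D_\rho^{\mathrm{left}}(\pi_1)$ for which
\[ \Hom_{\GL_{n-2}(F)}\bigl(D_\rho^{\mathrm{left}}(\pi_1),\, D_\rho^{\mathrm{right}}(\pi_2)\bigr) \neq 0. \]
The essential combinatorial claim is then that both derivatives remain irreducible of Arthur type and that their parameters are obtained from $\A(\pi_1)$ and $\A(\pi_2)$ by removing matched corners of the same Speh diagram, so that the relevance structure is strictly simpler but preserved. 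By the inductive hypothesis the new pair is relevant, and reversing the sequence of corner removals reconstructs the decomposition witnessing relevance for $(\pi_1,\pi_2)$.

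The principal obstacle is controlling how derivatives interact with products of Speh representations and their Aubert-Zelevinsky duals: the Leibniz sum a priori has many terms, and one must single out a unique term that survives and is still irreducible of Arthur type. This requires the commutativity and $\square$-irreducibility properties of Speh representations together with a Moeglin-style analysis of multisegments to track precisely how boxes are removed. Once this stability result is in place, both directions reduce to essentially the same combinatorial recursion on Arthur parameter data, and the ``if'' direction can even be recovered as the reverse pass of the inductive construction used for ``only if''.
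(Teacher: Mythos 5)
This theorem is not proved in the paper at all: it is quoted verbatim from K.\,Y.\ Chan's work (\cite[Theorem 1.3]{ch22}, with one direction due to Gurevich \cite{gu21}) and is used only as background. So there is no in-paper argument to compare yours against; the relevant comparison is with the actual proofs in the literature, and measured against those your proposal is an outline whose hard steps are asserted rather than proved.

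Two of those assertions are genuinely problematic as stated. First, in the ``only if'' direction you claim that the matched derivatives $D_\rho^{\mathrm{left}}(\pi_1)$ and $D_\rho^{\mathrm{right}}(\pi_2)$ ``remain irreducible of Arthur type'' and that their parameters are obtained by removing matched corners. For a general $\rho$-derivative of a product of Speh representations this fails: derivatives of a Speh representation below the highest level are not of this shape and carry extra cuspidal support (cf.\ the paper's Lemma \ref{derivative of speh representations}, where $\pi^{(k)}$ for $k<\lev(\pi)$ contains $\nu^{(a+b-2)/2}\rho$ and only the highest derivative, after the $\nu^{1/2}$ twist, is again Speh), and the Leibniz filtration of a product has many layers whose $\Hom$ contributions must be killed by a cuspidal-support or filtration analysis. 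Isolating the one surviving layer and proving your ``left-right Hom-derivative compatibility'' is precisely the substance of Chan's argument (his coarse filtration, Theorem \ref{filtration for parabolically induced modules}, reduction and transfer lemmas), not a lemma one may cite informally. Second, in the ``if'' direction, disposing of the Aubert--Zelevinsky-dual summands ``by applying the involution on both sides'' is not available: $D$ is an involution on irreducibles (or on the Grothendieck group), not an exact functor commuting with restriction to $\GL_{n-1}(F)$, so $\Hom_{\GL_{n-1}(F)}(\pi_1,\pi_2)$ and $\Hom_{\GL_{n-1}(F)}(D(\pi_1),D(\pi_2))$ are not interchangeable --- indeed the asymmetry of branching under $D$ (e.g.\ $\Hom_{\GL_2(F)}(\mathbbm{1}_3,\St_2)=0$ while $\Ext^1_{\GL_2(F)}(\mathbbm{1}_3,\St_2)\neq 0$) is the motivation for the present paper. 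Finally, the claim that reversing the corner removals reconstructs a relevance matching is a nontrivial combinatorial statement (note the paper's Section \ref{section nine} on non-uniqueness of matchings in the related strong Ext relevance setting). In short, the skeleton follows the known derivative-based strategy, but the proposal leaves unproved exactly the steps that make the theorem hard.
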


It is known that the above relevance condition does not control the $\Ext$ branching law for Arthur type representations. This means that there exist Arthur type irreducible representations $\pi_1$ and $\pi_2$ of $\GL_n(F)$ and $\GL_{n-1}(F)$ respectively such that $\pi_1$ and $\pi_2$ are not relevant but $\Ext^i_{\GL_{n-1}(F)}(\pi_1,\pi_2)\neq 0$ for some $i\geq 0$. 

For instance, let $\pi_1= \mathbbm{1}_3$ be the trivial representation of $\GL_3(F)$ and let $\pi_2=\St_2$ be the Steinberg of $\GL_2(F)$. Then we have that \[ \Ext^1_{\GL_{2}(F)}(\mathbbm{1}_3,\St_2) =  \mathbb{C}, \] although in this case, $\mathbbm{1}_3$ and $\St_2$ do not have relevant Arthur parameters. Note that the restriction of the trivial representation of $\GL_3(F)$ to $\GL_2(F)$ is the trivial representation of $\GL_2(F)$.

We now introduce the notion of \textit{strong} $\Ext$ relevance which controls the $\Ext$ branching law in the cases we study in this paper. 

\begin{definition} \label{definition of strong ext relevance}
 Let $m, n \in \mathbb{Z}_{\geq 0}$ and let $\pi_1$ and $\pi_2$ be Arthur type representations of $\GL_m(F)$ and $\GL_n(F)$ respectively. Let $\A(\pi_1)$ and $\A(\pi_2)$ denote their respective Arthur parameters. We say that $\pi_1$ and $\pi_2$ are \textit{strong} $\Ext$ relevant if there exist admissible homomorphisms $\{\phi_i\}_{i=1}^{i=r+s}$ and $\{\psi_i\}_{i=1}^{i=t+u}$ of $W_F$ with bounded image and positive integers $a_1,a_2,\cdots a_r,$ $ b_{r+1},b_{r+2},\cdots b_{r+s},$ $ c_1,c_2,\cdots c_{r+s},$ $d_1,d_2,\cdots d_t, e_{t+1},e_{t+2},$ $\cdots e_{t+u},$ $f_1,f_2,\cdots f_{t+u}$ such that, \[ \A(\pi_1) = \sum_{i=1}^{r} \phi_i \otimes V_{c_i} \otimes V_{a_i} \oplus \sum_{i=r+1}^{r+s} \phi_i \otimes V_{c_i} \otimes V_{b_i - 1} \oplus \sum_{i=1}^{t} \psi_i \otimes V_{f_i} \otimes V_{d_i}  \oplus  \sum_{i=t+1}^{t+u} \psi_i \otimes V_{e_i-1} \otimes V_{f_i} \] and, \[ \A(\pi_2) = \sum_{i=1}^{r} \phi_i \otimes V_{c_i} \otimes V_{a_i-1}  \oplus\sum_{i=r+1}^{r+s} \phi_i \otimes V_{c_i} \otimes V_{b_i} \oplus \sum_{i=1}^{t} \psi_i \otimes V_{d_i-1} \otimes V_{f_i} \oplus  \sum_{i=t+1}^{t+u} \psi_i \otimes V_{f_i} \otimes V_{e_i}. \]   
\end{definition}

\begin{remark}
The definition stated above makes sense for arbitrary integers $m$ and $n$. Later we shall specialise to the case $m=n+1$.     
\end{remark}

\begin{remark} 
A priori there are eight possible terms that might occur in the definition of strong $\Ext$ relevance where $\Ext$ might be non-zero based on the GGP relevance together with application of the Aubert-Zelevinsky involution. We enumerate these possibilities below where the first term is for $\GL_m(F)$ and the second term is for $\GL_n(F)$.
\begin{enumerate}
    \item $\phi \otimes V_a \otimes V_b$ and $\phi \otimes V_a \otimes V_{b-1}$. 
    \item $\phi \otimes V_a \otimes V_{b-1}$ and $\phi \otimes V_a \otimes V_{b}$.
    \item $\phi \otimes V_a \otimes V_{b}$ and $\phi \otimes V_{b-1} \otimes V_{a}$.
    \item $\phi \otimes V_{b-1} \otimes V_{a}$ and $\phi \otimes V_a \otimes V_{b}$.
    \item $\phi \otimes V_{a} \otimes V_{b}$ and $\phi \otimes V_b \otimes V_{a-1}$.
    \item $\phi \otimes V_{b} \otimes V_{a-1}$ and $\phi \otimes V_a \otimes V_{b}$.
    \item $\phi \otimes V_a \otimes V_b$ and $\phi \otimes V_{a-1} \otimes V_{b}$. 
    \item $\phi \otimes V_{a-1} \otimes V_{b}$ and $\phi \otimes V_a \otimes V_{b}$.
\end{enumerate} 

But only the sum of the first four is expected to give us a sufficient condition to obtain non-zero $\Ext$ modules. An example which suggests us to rule out some of the eight possibilities is the following. We know from the work of K.Y Chan and G. Savin \cite{cs21} that the Steinberg $\St_n$ of $\GL_{n}(F)$ is projective upon restriction to $\GL_{n-1}(F)$ (and in fact isomorphic to the Gelfand-Graev representation upon restriction to $\GL_{n-1}(F)$) so that, \[ \Ext^i_{\GL_{n-1}(F)}(\St_n,\mathbbm{1}_{n-1}) = 0 \] for all integers $i\geq 1$. Here, $\mathbbm{1}_{n-1}$ denotes the trivial representation of $\GL_{n-1}(F)$. Now by the Euler-Poincare pairing formula of Dipendra Prasad (see \cite{pr18}) we have that, \[ \dim \Wh(\St_n).\dim \Wh(\mathbbm{1}_{n-1}) = \sum_{i\geq 0} (-1)^
{i} \dim \Ext^{i}_{\G_{n-1}}(\St_n,\mathbbm{1}_{n-1}), \] where $\Wh(\St_n)$ and $\Wh(\mathbbm{1}_{n-1})$ are the spaces of Whittaker models for $\St_n$ and $\mathbbm{1}_{n-1}$ respectively. Hence for $n\geq 3$, using the fact that $\Ext^i_{\GL_{n-1}(F)}(\St_n,\mathbbm{1}_{n-1}) = 0$ for all integers $i\geq 1$, along with the Euler-Poincare pairing implies that $\Hom_{\G_{n-1}}(\St_n,\mathbbm{1}_{n-1})=0$. Hence when $n\geq 3$ we have that, \[ \Ext^*_{\GL_{n-1}(F)}(\St_n,\mathbbm{1}_{n-1}) = 0. \] Notice that for $n\geq 3$ the Arthur parameters of $\St_n$ and $\mathbbm{1}_{n-1}$, \[ \A(\St_n) = \mathbbm{1} \otimes V_n \otimes V_1, \]  \[ \A(\mathbbm{1}_{n-1}) = \mathbbm{1} \otimes V_1 \otimes V_{n-1}, \] are not strong $\Ext$ relevant. \textit{Although we do not give an explicit example here, even these eight possibilities do not give a necessary condition for non-vanishing of Ext groups}.

\end{remark}

We prove that strong $\Ext$ relevance gives a necessary and sufficient condition for predicting $\Ext$ non-vanishing when $\pi_1$ and $\pi_2$ are products of discrete series representations and the Aubert-Zelevinsky duals of discrete series representations. Let us state what this condition on $\pi_1$ and $\pi_2$ means, in terms of their Arthur parameters. In this case, the Arthur parameters of $\pi_1$ and $\pi_2$ are a sum of irreducible representations such that for each irreducible direct summand either the Deligne $\SL_2$ or the Arthur $\SL_2$ acts trivially. This means that the Arthur parameters of $\pi_1$ and $\pi_2$ are a direct sum of irreducible representations of the form $\phi \otimes V_a \otimes V_b$ such that either $a=1$ or $b=1$. Here, $\phi$ denotes an irreducible representation of $W_F$ and $V_d$ denotes the unique $d$-dimensional irreducible representation of $\SL_2(\mathbb{C})$. The main result of this paper is the following $\Ext$ branching law. 

\begin{theorem} \label{main theorem for segment type representations}
Let $\pi_1$ and $\pi_2$ be Arthur type representations of $\GL_n(F)$ and $\GL_{n-1}(F)$ respectively. Suppose that the Arthur parameters of $\pi_1$ and $\pi_2$ are of the form, \[ \A(\pi_1) = \sum_{i=1}^{r} \phi_i \otimes V_{a_i} \otimes V_{1} \oplus \sum_{i=r+1}^{r+s} \phi_i \otimes V_1 \otimes V_{b_i} \] and, \[ \A(\pi_2) =  \sum_{i=1}^{t} \psi_i \otimes V_{c_i} \otimes V_{1} \oplus \sum_{i=t+1}^{t+u} \psi_i \otimes V_{1} \otimes V_{d_i}. \] Then, 
\[ \Ext^i_{\GL_{n-1}(F)}(\pi_1,\pi_2)\neq 0 \] for some integer $i\geq 0$, if and only if $\pi_1$ and $\pi_2$ are strong $\Ext$ relevant. 

\end{theorem}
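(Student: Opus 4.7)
The plan is to reduce both directions of the theorem to Chan's $\Hom$ branching law \cite{ch22} via the Aubert-Zelevinsky involution $\pi \mapsto \widehat{\pi}$ and Bernstein-Zelevinsky filtrations. Since every irreducible direct summand of $\A(\pi_1)$ or $\A(\pi_2)$ has either trivial Arthur $\SL_2$ or trivial Deligne $\SL_2$, both $\pi_1$ and $\pi_2$ factor as parabolic products $\sigma_j \times \widehat{\tau}_j$, where $\sigma_j$ and $\tau_j$ are products of discrete series. The four families of matched summand pairs in Definition \ref{definition of strong ext relevance} then correspond to the four possible matchings between discrete series pieces and dualized pieces across the pair $(\pi_1, \pi_2)$.

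For sufficiency, given a strong $\Ext$ relevance decomposition, I would construct a non-zero class in some $\Ext^i$ by induction on the number of matched summand pairs. Pairs of Types 1 and 2 (both pieces on the same ``dualized'' side) reduce, after applying the Aubert-Zelevinsky involution to both $\pi_1$ and $\pi_2$ and invoking a compatibility of the involution with $\Ext$ branching up to a degree shift, to a pair of products of discrete series satisfying the GGP relevance of Chan's theorem; this yields non-zero $\Hom$ in the dualized picture and hence non-zero $\Ext^i$ for the original pair. Pairs of Types 3 and 4 (cross-type) are handled via explicit formulas for Bernstein-Zelevinsky derivatives of discrete series and their Aubert-Zelevinsky duals, together with Frobenius reciprocity producing the required non-vanishing at a specific homological degree. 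For necessity, I would apply the geometric lemma to $\pi_1|_{\GL_{n-1}(F)}$ to obtain a Mackey-type filtration, compute the associated graded $\Ext$ spaces using known formulas for BZ derivatives of $\pi_1$ and $\pi_2$, and show that any surviving contribution forces a matching of summands in one of the four allowed types; an induction on the total rank then completes the argument.

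The main obstacle is ruling out forbidden cross-type contributions on the necessity side. For instance, a discrete series piece of $\pi_1$ with Arthur $\SL_2$ dimension $a$ should not match a discrete series piece of $\pi_2$ with dimension $a-1$ (since this lies outside the four allowed types); the required vanishing of $\Ext^*$ in the simplest such case is guaranteed by the projectivity of the Steinberg established by Chan-Savin, but extending this to configurations with additional summands without accidental cancellations in the induced spectral sequence requires careful bookkeeping. Prasad's Euler-Poincar\'e pairing should prove useful for tracking alternating contributions, and the problem reduces ultimately to a combinatorial analysis of which parameter pairings can survive through the geometric filtration.
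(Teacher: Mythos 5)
Your sufficiency argument rests on a step that is not available: you propose to handle the ``dualized'' summand pairs by applying the Aubert--Zelevinsky involution to both $\pi_1$ and $\pi_2$ and invoking ``a compatibility of the involution with $\Ext$ branching up to a degree shift,'' thereby reducing to Chan's $\Hom$ branching theorem. No such compatibility holds across restriction to $\GL_{n-1}(F)$, and the paper's own introductory example refutes it: $\Ext^1_{\GL_2(F)}(\mathbbm{1}_3,\St_2)=\mathbb{C}$, while for the dual pair one has $\Ext^*_{\GL_2(F)}(\St_3,\mathbbm{1}_2)=0$ (projectivity of $\St_3|_{\GL_2}$ plus the Euler--Poincar\'e formula). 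The duality that does exist (Nori--Prasad, Theorem \ref{duality theorem of nori and prasad}) relates $\Ext$-groups of representations of the \emph{same} group, and that is precisely how the paper uses it --- only to show $\Ext^*_{\G_n}(\pi,D(\pi))\neq 0$ inside Lemma \ref{lemma three about extensions between segment type representations}, after the branching problem has already been converted into an extension problem on a single group. So the central mechanism of your sufficiency direction collapses, and the cross-type cases you defer to ``explicit formulas for BZ derivatives and Frobenius reciprocity'' are not worked out either.

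On the necessity side your outline reproduces exactly the difficulty the paper is organized to avoid. Filtering $\pi_1|_{\GL_{n-1}(F)}$ by the full Bernstein--Zelevinsky/Mackey filtration and ``computing the associated graded $\Ext$ spaces'' does not work naively, because several pieces can a priori contribute and their higher $\Ext$-groups need not vanish; your proposal acknowledges the resulting spectral-sequence bookkeeping but does not resolve it, and the Euler--Poincar\'e pairing cannot help, since it only sees the alternating sum and the problem is precisely possible cancellation in individual degrees. The paper's route is different and is what makes the induction close: it uses Chan's coarser filtration (Theorem \ref{filtration for parabolically induced modules}) applied to the factor $u_{\rho_1}(a_1,b_1)$ with $a_1+b_1$ maximal, so that every piece except the bottom one dies by a cuspidal-support comparison at $\nu^{1/2+(a_1+b_1-2)/2}\rho_1$ (Lemma \ref{derivative of speh representations} and Lemma \ref{lemma for the key reduction step}), then the replacement lemma and the transfer lemma (to swap the roles of $\pi_1$ and $\pi_2$ when the maximal factor sits on the $\GL_{n-1}$ side), reducing both directions to the same-group extension lemmas of Section \ref{section five}. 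Those same-group lemmas (Lemmas \ref{lemma one about extensions between segment type representations}--\ref{lemma three about extensions between segment type representations}), which do the real work of forcing the matching $\pi_{2,m}=\pi_{1,1}^-$ or $D(\pi_{1,1}^-)$ and of producing nonvanishing classes via submodule occurrences in the opposite Jacquet module, have no counterpart in your proposal; without them (or a substitute), both directions of your argument remain open.
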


The above result can be thought as an $\Ext$ analogue of the local non-tempered Gan-Gross-Prasad conjecture in this case. 

\begin{remark}
The hypothesis on Arthur parameters in the above theorem amounts to saying that $\pi_1$ and $\pi_2$ are products of unitary representations of segment type (see Section \ref{subsection on definition of speh representations of segment type}). This means that $\pi_1$ and $\pi_2$ are products of unitary representations of the form $Z(\Delta)$ and $Q(\Delta)$ (see Section \ref{section two} for notation).
\end{remark}

We now restate the definition of strong $\Ext$ relevance in representation theoretic terms. We refer to Sections \ref{section two} and \ref{section three} for unexplained notations. Given an irreducible representation $\pi$ of $\GL_n(F)$, we let $\pi^{(h)}$ denote its highest derivative. For the sake of notational convenience, we set $\pi^{-}= \nu^{1/2}\pi^{(h)}$. We let $D(\pi)$ denote the Aubert-Zelevinsky involution of $\pi$.

Let $u_{\rho}(a,b)$ be a Speh representation (see Section \ref{subsection on definition of speh and arthur type representations} for notation) whose corresponding Arthur parameter is given as \[ \phi \otimes V_a \otimes V_b. \] Here $\phi$ is an irreducible representation of $W_F$ corresponding to the cuspidal representation $\rho$. Then by Theorem \ref{Aubert-Zelevinsky Dual of a Speh Representation} (due to Tadic), the Aubert-Zelevinsky dual of $u_{\rho}(a,b)$ is equal to $u_{\rho}(b,a)$ and the corresponding Arthur parameter is equal to \[ \phi \otimes V_b \otimes V_a. \]  Also, by Lemma \ref{derivative of speh representations}, the highest derivative of $\nu^{1/2}u_{\rho}(a,b)$ is equal to $u_{\rho}(a,b-1)$, that is, \[ u_{\rho}(a,b)^- = u_{\rho}(a,b-1). \] This leads us to the following equivalent definition of strong $\Ext$ relevance in representation theoretic terms.

\begin{definition} 
 Let $m, n \in \mathbb{Z}_{\geq 0}$ and let $\pi_1$ and $\pi_2$ be Arthur type representations of $\GL_m(F)$ and $\GL_n(F)$ respectively. Then $\pi_1$ and $\pi_2$ are strong $\Ext$ relevant if there exist Speh representations, 
 \[ \pi_{m,1},\pi_{m,2},\cdots,\pi_{m,r},\pi_{n,1},\pi_{n,2},\cdots,\pi_{n,s}\] and, \[ \pi_{p,1},\pi_{p,2},\cdots,\pi_{p,t},\pi_{q,1},\pi_{q,2},\cdots,\pi_{q,u}\] such that, 
 \[ \pi_1 = \pi_{m,1}\times \cdots \pi_{m,r} \times \pi_{p,1}^- \times \cdots\times \pi_{p,t}^- \times \pi_{n,1}\times \cdots\times \pi_{n,s}  \times  D(\pi_{q,1}^-)\times \cdots\times D(\pi_{q,u}^-) \] and, \[ \pi_2 = \pi_{m,1}^-\times \cdots \pi_{m,r}^- \times \pi_{p,1} \times \cdots\times \pi_{p,t}  \times  D(\pi_{n,1}^-)\times \cdots\times D(\pi_{n,s}^-) \times \pi_{q,1} \times \cdots\times \pi_{q,u}. \]
\end{definition}

Let us now say a few words about what is expected in the general situation when $\pi_1$ and $\pi_2$ are representations of Arthur type. In general, the above notion of strong $\Ext$ relevance is expected to give a sufficient (but not necessary) condition in order to have non-trivial $\Ext$ branching, which we formulate in the following conjecture.

\begin{conjecture} \label{main conjecture for ext non vanishing}
 Let $\pi_1$ and $\pi_2$ be Arthur type representations of $\GL_n(F)$ and $\GL_{n-1}(F)$ respectively. If $\pi_1$ and $\pi_2$ are strong $\Ext$ relevant then, 
\[ \Ext^i_{\GL_{n-1}(F)}(\pi_1,\pi_2)\neq 0 \] for some integer $i\geq 0$.   
\end{conjecture}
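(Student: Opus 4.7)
The plan is to prove the conjecture by reduction to the segment-type case of Theorem \ref{main theorem for segment type representations} combined with a gluing argument for Ext groups under parabolic induction. Strong Ext relevance delivers a bijective pairing between the Arthur summands of $\A(\pi_1)$ and those of $\A(\pi_2)$, sorted into four types. Translating to representations via the Speh correspondence, this amounts to product factorisations
\[ \pi_1 \;=\; \alpha_1 \times \cdots \times \alpha_N, \qquad \pi_2 \;=\; \beta_1 \times \cdots \times \beta_N, \]
in which each pair $(\alpha_i,\beta_i)$ is one of $(\sigma,\sigma^-)$, $(\sigma^-,\sigma)$, $(\sigma,D(\sigma^-))$, or $(D(\sigma^-),\sigma)$ for some Speh representation $\sigma$. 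The first two types are GGP-relevant in the original sense and are already handled (at the Hom level) by Chan in \cite{ch22}; the last two bring in the Aubert-Zelevinsky dual and are the genuinely non-tempered AZ-dual pieces.

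The first step is to produce, for each paired block, a non-zero Ext class of computable degree. For types $1$ and $2$ the class sits in degree zero: the highest derivative provides a non-zero map $\sigma \twoheadrightarrow \sigma^-$ via second adjunction for the Bernstein-Zelevinsky functor. For types $3$ and $4$ one expects non-vanishing in some positive degree; the natural tool is the interaction between $D$ and $\Ext$. Combining Chan's Hom-non-vanishing for the GGP-relevant pair $(\sigma,\sigma^-)$ with a formula of the shape $\Ext^i_{\GL_{n-1}(F)}(\pi, D(\tau)) \cong \Ext^{i+d}_{\GL_{n-1}(F)}(D(\pi),\tau)$, valid up to a degree shift $d$ depending on the Speh parameters, should convert the known Hom class into an Ext class in the AZ-dual blocks. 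The shift $d$ may be predicted, and cross-checked, via Prasad's Euler-Poincare pairing formula recalled in Remark $3$ of the excerpt, since the Whittaker dimensions of Speh representations and their AZ duals are explicit.

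The second step is to glue the block classes into a non-zero Ext class for the full pair $(\pi_1,\pi_2)$. Arrange the two products so that corresponding $\alpha_i$ and $\beta_i$ sit in parallel Levi subgroups of compatible parabolics, and apply the Bernstein-Zelevinsky geometric lemma to obtain a spectral sequence whose $E_1$-term is a direct sum, indexed by Weyl double cosets, converging to $\Ext^*_{\GL_{n-1}(F)}(\pi_1,\pi_2)$. A 'diagonal' double coset contributes the tensor product of the block Ext classes produced in the first step; the remaining task is to argue that this diagonal contribution is neither a boundary nor cancelled by contributions from other double cosets. One may proceed by induction on $N$, peeling off one block at a time and using the known non-vanishing in the segment case as the base.

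The main obstacle is precisely this surviving-to-$E_\infty$ step. In the segment-type proof one exploits that highest derivatives of products of segment-type Speh representations remain segment-type and are explicitly computable, which allows the combinatorial cancellation in the geometric lemma to be controlled. For general Arthur-type $\pi_1$ and $\pi_2$ the higher derivatives of Speh representations are far more intricate (governed by Tadic's formula in full generality) and the spectral sequence has many more potentially cancelling terms. A secondary difficulty lies in making the Aubert-Zelevinsky degree-shift of the second step precise: $D$ is an involution only on the level of irreducibles, and its derived behaviour on the $\Ext$ bifunctor needs to be set up with enough care to track the exact degree in which non-vanishing occurs across several Speh factors simultaneously.
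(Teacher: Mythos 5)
The statement you are addressing is left open in the paper: it appears only as Conjecture \ref{main conjecture for ext non vanishing}, and the paper proves it (together with a converse) only under the additional hypothesis that $\pi_1$ and $\pi_2$ are products of unitary representations of segment type, i.e.\ of discrete series and their Aubert--Zelevinsky duals (Theorem \ref{main theorem for segment type representations}, proved in Sections \ref{section 7 on proof of one direction} and \ref{section 8 on proof of other direction}). So there is no proof in the paper to compare yours against, and your sketch does not close the gap; the two places where it is incomplete are exactly the ones you flag, and they are genuine. First, the gluing step: you need the ``diagonal'' double-coset contribution of your geometric-lemma spectral sequence to survive to $E_\infty$, and no mechanism is offered for this. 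Note that even in the proved segment-type case the paper does not argue this way: it uses Chan's coarser variant of the Bernstein--Zelevinsky filtration (Theorem \ref{filtration for parabolically induced modules}) together with cuspidal-support comparisons (Lemma \ref{lemma for the key reduction step}) to show that every piece but one contributes zero, then the replacement and transfer lemmas (Lemmas \ref{replacement lemma} and \ref{transfer lemma}) and an induction on the number of non-cuspidal factors, and these steps exploit that the highest derivative of a segment-type Speh representation is again of segment type and that extensions on the same group between products of segment-type representations are controlled (Lemmas \ref{lemma one about extensions between segment type representations}--\ref{lemma three about extensions between segment type representations}). None of this is available for general Speh factors, which is precisely why the general statement remains a conjecture.

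Second, the degree-shift step for the AZ-dual blocks is not justified. The duality you would need is presumably that of Nori--Prasad (Theorem \ref{duality theorem of nori and prasad}), but that theorem works on a single group, requires the first argument to be irreducible, swaps the two arguments, and sends degree $i$ to $d(\pi_1)-i$, not $i+d$; in the branching situation the restriction $\pi_1|_{\G_{n-1}}$ is neither irreducible nor of finite length, so a formula of the shape $\Ext^i_{\G_{n-1}}(\pi,D(\tau))\cong\Ext^{i+d}_{\G_{n-1}}(D(\pi),\tau)$ is itself an unproved assertion of roughly the same depth as the conjecture. Moreover, the proposed cross-check via the Euler--Poincar\'e pairing is vacuous in exactly the cases you need it: whenever one of the two representations is non-generic (any $u_\rho(a,b)$ with $a,b\geq 2$, or the AZ dual of a discrete series such as a trivial representation), its Whittaker space is zero, so the pairing equals $0$ and cannot detect in which degree, or indeed whether, a non-zero Ext class exists. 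As it stands the proposal is a plausible programme rather than a proof, and its two hardest steps are precisely the points where the paper's segment-type argument does not generalize.
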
 

We point out that in general, strong $\Ext$ relevance does not give us a necessary condition for $\Ext$ non-vanishing. This means that there exist Arthur type irreducible representations $\pi_1$ and $\pi_2$ of $\GL_n(F)$ and $\GL_{n-1}(F)$ respectively such that $\Ext^i_{\GL_{n-1}(F)}(\pi_1,\pi_2)\neq 0$ for some $i\geq 0$, but $\pi_1$ and $\pi_2$ are not strong $\Ext$ relevant. 

Consider the following example. Let $\rho$ denote the trivial representation of $\GL_1$. Consider the representations $\pi_1=u_{\rho}(2,3)$ and $\pi_2=u_{\rho}(3,1)\times \rho \times \rho$ of $\GL_6(F)$ and $\GL_5(F)$ respectively. Then by Theorem 7.4 of \cite{ch22} it follows that, \[ \Ext^*_{\GL_5(F)}(\pi_1,\pi_2) \neq 0. \] Note that in this example $\pi_1$ and $\pi_2$ are not strong $\Ext$ relevant.

\subsection{Remarks on the Proof} When dealing with the branching problem for the $p$-adic general linear group it is natural to invoke the Bernstein-Zelevinsky filtration. A very pleasant situation that might occur is the following. Suppose that one is able to show no piece other than exactly one in the Bernstein-Zelevinsky filtration contributes to the branching law. Moreover, suppose that we are also able to show that higher $\Ext$ modules for the other pieces vanish. Then we are left with exactly one piece to analyse which is quite easy to handle. Unfortunately this pleasant situation does not always occur. 

Therefore rather than using the Bernstein-Zelevinsky filtration we use a variant of it introduced by K.Y. Chan in the work \cite{ch22}. This variant (see Theorem \ref{filtration for parabolically induced modules}) clumps together some pieces of the Bernstein-Zelevinsky filtration and hence is coarser than the Bernstein-Zelevinsky filtration. Due to some considerations involving cuspidal support one can show that no piece except one in this coarser filtration can contribute to the $\Ext$ branching.  Thus we are left with an easier situation to analyse. As a consequence one is able to replace some factors in the given representation by a suitable cuspidal representation. In the course of the proof we use the transfer lemma (Lemma \ref{transfer lemma} due to K.Y. Chan) that allows us to transfer the problem from the pair $(\GL_n,\GL_{n-1})$ to the pair $(\GL_{n+1},\GL_{n})$. The variant of the Bernstein-Zelevinsky filtration introduced by K.Y. Chan and other ideas from \cite{ch22} are key to our proofs.

\subsection{Organization of the Paper}
The organization of the paper is as follows. We introduce some preliminary notions and set up notations in Section \ref{section two}. We then introduce various classes of representations in Section \ref{section three}. In Section \ref{section four}, we recall the coarse variant of the Bernstein-Zelevinsky filtration due to K.Y. Chan and collect some important lemmas from \cite{ch22}. We prove some lemmas about extensions on the same group in Section \ref{section five}. We prove a preparatory reduction lemma in Section \ref{section six}. We prove Theorem \ref{main theorem for segment type representations} in Sections \ref{section 7 on proof of one direction} and \ref{section 8 on proof of other direction}. In Section \ref{section nine} we make a remark about the non-uniqueness of matching of factors in Definition \ref{definition of strong ext relevance} of strong Ext relevance.

\section{Preliminaries} \label{section two}

\subsection{Basic Notations}

Let $F$ be a non-archimedean local field. For a $p$-adic group $G$, let $\Alg(G)$ denote the category of smooth representations of $G$. In this paper we are interested in the study of smooth representations of the $p$-adic general linear group and all representations considered in this paper are smooth. For $n\in \mathbb{Z}_{\geq 0}$, we let $\G_n = \GL_n(F)$. Let $\Irr(\G_n)$ denote the collection of irreducible representations of $\G_n$ and set $\Irr = \cup_{n\geq 0} \Irr(\G_n)$. For $\pi \in \Alg(\G_n)$ we set $n(\pi)=n$ and let $\pi^{\vee}$ denote the smooth dual of $\pi$. 

Given a $p$-adic group $G$ and a closed subgroup $H$ and a representation $\rho$ of $H$, we let $\Ind_H^G(\rho)$ (resp. $\cind_H^G(\rho)$) denote the representation of $G$ obtained by normalized induction (resp. normalized compact induction).

We let $\nu_n$ denote the character of $\G_n$ defined as $\nu_n(g) = |\det(g)|_F$, where $g \in \G_n$. We shall usually suppress the subscript and simply denote the character as $\nu$. 

Given an irreducible representation $\pi$, we let $\chi_{\pi}$ denote its central character. Suppose that $|\chi_{\pi}|=\nu^s$. We set $e(\pi)=s$. 

Let $\R_n$ denote the Grothendieck group of the category of smooth representations of finite length of $\G_n$. Set $\R = \oplus_{n\geq 0} \R_n$. The group $\R$ has the structure of a graded bialgebra where the product (denoted by $\times$) is given by normalized parabolic induction and the co-product (denoted by $\partial$) is defined using the Jacquet functor. 

Suppose $\pi_1\in \Alg(\G_{n_1})$ and $\pi_2\in \Alg(\G_{n_2})$ for some $n_1,n_2 \in \mathbb{Z}_{\geq 0}$. Then $\pi_1\times \pi_2$ denotes the representation of $\G_{n_1+n_2}$ obtained by parabolic induction and is defined as, \[ \pi_1\times \pi_2 = \Ind_{P_{n_1,n_2}}^{\G_{n_1+n_2}} (\pi_1 \otimes \pi_2).\] Here $P_{n_1,n_2}$ denotes the standard parabolic in $\G_{n_1+n_2}$ with Levi $\G_{n_1}\times \G_{n_2}$ corresponding to the partition $(n_1,n_2)$. We consider $\pi_1 \otimes \pi_2$ as a representation of $P_{n_1,n_2}$ by extending it trivially across the unipotent subgroup.

Given an irreducible representation $\pi$ of $\G_n$ there exists a unique multiset $\{\rho_1,\rho_2,\cdots,\rho_r\}$ consisting of cuspidal representations such that $\pi$ is a subquotient of $\rho_1\times \rho_2\times \cdots\times \rho_r$. We call this multiset the cuspidal support of $\pi$ and denote it as $\csupp(\pi)$. We define $\csupp_{\mathbb{Z}}(\pi)$, a set of cuspidal representations of $\G_n$ as, \[ \csupp_{\mathbb{Z}}(\pi)=\{ \nu^m\rho | \rho \in \csupp(\pi), m \in \mathbb{Z} \}\] and we say that elements of $\csupp_{\mathbb{Z}}(\pi)$ lie in the cuspidal lines of $\pi$.

Consider $n_1,n_2,\cdots,n_r, n \in \mathbb{Z}_{\geq 0}$ such that $n_1+n_2+\cdots +n_r=n$. Given $\pi \in \Alg(\G_n)$, we let $r_{(n_1,\cdots,n_r)}(\pi)$ denote the Jacquet module of $\pi$, which is a representation of $\G_{n_1}\times \G_{n_2}\times  \cdots \times \G_{n_r}$. Now, given $\pi \in \Alg(\G_n)$, the co-product $\partial: \R \rightarrow \R \times \R$ is defined as, \[ \partial(\pi) = \sum_{i=0}^{n} r_{(i,n-i)}(\pi). \]

Given $\pi \in \Alg(\G_n)$, we shall denote the opposite Jacquet functor corresponding to the partition $(n_1,n_2)$ of $n$ as $\ropp_{(n_1,n_2)}(\pi)$ which as a consequence of the second adjointness theorem is, \[ \ropp_{(n_1,n_2)}(\pi) \simeq (r_{(n_1,n_2)}(\pi^{\vee}))^{\vee}. \]

\subsection{Zelevinsky and Langlands Classification}

Both the Zelevinsky and Langlands classification parameterize any irreducible representation of $\G_n$ in terms of a collection of segments as explained below.

Consider $a, b \in \mathbb{C}$ such that $b-a\in \mathbb{Z}_{\geq 0}$. Given a cuspidal representation $\rho$ of $\G_n$, a segment $\Delta$ associated to the datum ($\rho$,$a$,$b$) is an ordered set of irreducible representations of $\G_n$ of the form, 
\[ \Delta = [a,b]_{\rho} = \{\nu^a\rho,\nu^{a+1}\rho,\cdots,\nu^b\rho\}. \] 
 We set $a(\Delta)=a$ and let $b(\Delta)=b$. If $\rho$ is the trivial representation of $\G_1$ then we may omit it from our notation and simply write the the segment as $\Delta = [a,b]$.

Given the segment $\Delta=[a,b]_{\rho}$, the associated principal series, \[ \nu^a\rho\times \nu^{a+1}\rho\times \cdots \times \nu^b\rho\] has a unique irreducible submodule and unique irreducible quotient which we denote as $Z(\Delta)$ and $Q(\Delta)$ respectively.

We say that two segments $\Delta_1$ and $\Delta_2$ are linked if $\Delta_1 \not \subset \Delta_2$, $\Delta_2 \not \subset \Delta_1$ and $\Delta_1 \cup \Delta_2$ is a segment. We say that $\Delta_1$ precedes $\Delta_2$ if $\Delta_1$ and $\Delta_2$ are linked and $b(\Delta_2)= \nu^c b(\Delta_1)$ for some $c\in \mathbb{Z}_{>0}$. 

Suppose we are given a multiset of segments (which we call a multisegment) $\mathfrak{m} = \{ \Delta_1,\Delta_2,\cdots,\Delta_r \}$ such that $\Delta_i$ does not precede $\Delta_j$ for any $i<j$. The representation \[ Z(\Delta_1)\times Z(\Delta_2) \times \cdots \times Z(\Delta_r)\] has a unique irreducible submodule which we denote as $Z(\Delta_1,\Delta_2,\cdots,\Delta_r)$. By the Zelevinsky classification any irreducible representation of $\G_n$ can be uniquely realized in this manner. Similarly by the Langlands classification, the representation \[ Q(\Delta_1)\times Q(\Delta_2) \times \cdots \times Q(\Delta_r)\] has a unique irreducible quotient which we denote as $Q(\Delta_1,\Delta_2,\cdots,\Delta_r)$.

\subsection{The Aubert-Zelevinsky Involution} 

Given a irreducible representation $\pi$ of $\GL_n(F)$ the Aubert-Zelevinsky Involution (denoted by the symbol $D$) takes $\pi$ to another irreducible representation of $\GL_n(F)$. This involution can be also defined in the more general context of any reductive $p$-adic group. Since we are only concerned with the general linear group in this paper we only recall the definition for $\GL_n$. 

Recall that $\R_n$ denotes the Grothendieck group of the category of smooth representations of finite length of $\G_n$. The ring $\R_n$ is a polynomial ring in the indeterminates of the form $Z(\Delta)$ (see \cite[Corollary 7.5]{ze80}). Therefore it is enough to define the involution $D$ on elements of the form $Z(\Delta)$. We define, \[ D: \R_n \rightarrow \R_n \] by sending $Z(\Delta)$ to $Q(\Delta)$. It is known that the involution $D$ takes irreducible representations to irreducible representations.

We have the following cohomological duality theorem (see \cite[Theorem 2]{np20}) due to Nori and Prasad.

\begin{theorem} \label{duality theorem of nori and prasad}
Let $\pi_1$ be a smooth irreducible representations of $\G_n$ and $\pi_2$ be any smooth representation of $\G_n$. Then for any integer $i\geq 0$, \[ \Ext^i_{\G_n}(\pi_1,\pi_2)^{\vee} \simeq \Ext^{d(\pi_1)-i}_{\G_n}(\pi_2,D(\pi_1)).\] Here $d(\pi_1)$ is the split rank of the Levi subgroup of $\G_n$ which carries the cuspidal support of $\pi_1$. Here, $^{\vee}$ denotes the dual as a linear vector space.
\end{theorem}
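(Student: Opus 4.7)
The plan is to realize both sides of the asserted duality as cohomology groups of the same finite complex (up to linear dualization), using a resolution of $\pi_1$ of length $d(\pi_1)$ whose Euler characteristic recovers the Aubert-Zelevinsky involution.

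The starting point is Aubert's formula, expressing $D(\pi_1)$ in the Grothendieck group of $\G_n$ as the alternating sum
\[
D(\pi_1) \;=\; \sum_{\Theta} (-1)^{|\Theta|}\, \Ind_{P_\Theta}^{\G_n}\bigl(r_{P_\Theta}(\pi_1)\bigr),
\]
where $\Theta$ runs over subsets of the simple roots of the minimal Levi $\M$ of $\G_n$ carrying $\csupp(\pi_1)$. The key input is that this identity lifts to an honest complex $C^\bullet(\pi_1)$ of length $d(\pi_1)$ whose $k$-th term is $\bigoplus_{|\Theta|=k}\Ind_{P_\Theta}^{\G_n}(r_{P_\Theta}(\pi_1))$, quasi-isomorphic to $\pi_1$ placed in degree zero, and whose linear dual is quasi-isomorphic to $D(\pi_1)$ placed in cohomological degree $d(\pi_1)$.

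To compute the left-hand side, one applies $\Hom_{\G_n}(-,\pi_2)$ to $C^\bullet(\pi_1)$: by Bernstein's second adjointness each term becomes $\Hom_{\M_\Theta}\bigl(r_{P_\Theta}(\pi_1),\, \ropp_{P_\Theta}(\pi_2)\bigr)$, and since parabolic induction is exact with exact left adjoint, the resulting complex computes $\Ext^\ast_{\G_n}(\pi_1,\pi_2)$ (the hyper-Ext spectral sequence collapses). Taking the linear dual reverses the complex, replaces every $\Hom$-term by its vector-space dual (finite-dimensional by admissibility of $\pi_1$), and relabels cohomological degree $i$ as $d(\pi_1)-i$. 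A symmetric computation on the right-hand side, applying $\Hom_{\G_n}(\pi_2,-)$ to the dualized complex $C^\bullet(\pi_1)^{\vee}$ whose top cohomology is $D(\pi_1)$, matches termwise with the dualized computation of $\Ext^{i}_{\G_n}(\pi_1,\pi_2)$; since second adjointness is symmetric in the roles of $r_{P_\Theta}$ and $\ropp_{P_\Theta}$, the two complexes are naturally isomorphic term by term.

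The main obstacle is the honest derived-category enhancement of Aubert's Grothendieck-group identity: one must construct an explicit complex with correctly signed differentials whose cohomology produces $\pi_1$ on one side and $D(\pi_1)$ concentrated in degree $d(\pi_1)$ on the other. This requires working inside the Bernstein block of $\pi_1$, which by Bernstein's theorem has cohomological dimension exactly $d(\pi_1)$, together with careful sign and shift bookkeeping in the Aubert/Schneider-Stuhler complex and verification of $\Hom$-acyclicity (or replacement by a termwise projective resolution inside the block). Once these are in place, the duality is obtained by simply transposing the complex, so the entire content of the theorem is concentrated in producing the resolution with the right degree shift $d(\pi_1)$.
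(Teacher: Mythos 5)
First, a point of comparison: the paper does not prove this statement at all --- it is quoted from Nori--Prasad \cite{np20} --- so your proposal is competing with that external proof rather than with an argument in the paper. Judged on its own terms, it has a genuine gap, concentrated exactly where you locate ``the entire content of the theorem.'' Your key input is a complex $C^\bullet(\pi_1)$ with $k$-th term $\bigoplus_{|\Theta|=k}\Ind_{P_\Theta}^{\G_n}(r_{P_\Theta}(\pi_1))$ that is simultaneously quasi-isomorphic to $\pi_1$ in degree $0$ and whose linear dual is quasi-isomorphic to $D(\pi_1)$ in degree $d(\pi_1)$. These two demands are incompatible: linear (and smooth) dualization over $\mathbb{C}$ is exact, so if $C^\bullet\simeq \pi_1[0]$ then its dual is quasi-isomorphic to the dual of $\pi_1$ concentrated in degree $0$, never to anything sitting in degree $d(\pi_1)>0$. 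Moreover, no such exact lift of Aubert's alternating sum exists in general --- the failure of the complex $\pi_1\to\bigoplus_{|\Theta|=1} \Ind_{P_\Theta}r_{P_\Theta}(\pi_1)\to\cdots$ to be a resolution is precisely why $D$ is only defined on the Grothendieck group; the classical case where it does work is the trivial representation (the Steinberg resolution), which is misleadingly special. A decisive test is the cuspidal case: if $\pi_1$ is cuspidal there are no proper parabolic contributions, your complex degenerates to $\pi_1$ in degree $0$, and transposition produces no shift, yet the theorem asserts $\Ext^i_{\G_n}(\pi_1,\pi_2)^{\vee}\simeq\Ext^{d(\pi_1)-i}_{\G_n}(\pi_2,\pi_1)$ with $d(\pi_1)\geq 1$ (already $\Ext^1_{\G_n}(\pi_1,\pi_1)\neq 0$ for cuspidal $\pi_1$, coming from unramified twists). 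So the degree shift cannot be created by an Aubert-type complex of parabolically induced modules; its true source is the cohomological dimension of the Bernstein block, i.e.\ Koszul/local duality for the torus of unramified characters of the Levi carrying $\csupp(\pi_1)$, which is where Schneider--Stuhler and Nori--Prasad actually do the work (second adjointness then accounts for the appearance of $D(\pi_1)$).

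Two further steps would also need repair even if a resolution existed. Applying $\Hom_{\G_n}(-,\pi_2)$ termwise to $C^\bullet(\pi_1)$ computes $\Ext^*_{\G_n}(\pi_1,\pi_2)$ only if the terms are $\Hom(-,\pi_2)$-acyclic; $\Ind_{P_\Theta}r_{P_\Theta}(\pi_1)$ is not projective, and the exactness of parabolic induction and restriction does not make the hyper-Ext spectral sequence collapse. And finite-dimensionality of the individual $\Hom$-terms (needed to dualize termwise and still compute the cohomology of the dual complex) is not automatic when $\pi_2$ is an arbitrary smooth representation, as the theorem allows. If you want an honest proof along algebraic lines, the productive route is the one of \cite{np20}: reduce via second adjointness to the block of the cuspidal support, use that this block is governed by a Laurent-polynomial-type algebra in $d(\pi_1)$ variables, and obtain the shift from the self-dual Koszul resolution there.
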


\subsection{On Derivatives and the Bernstein-Zelevinsky Filtration} \label{subsection on derivatives}

Given $\pi\in \Alg(\G_n)$, we want to define the $i$-th Bernstein-Zelevinsky derivative $\pi^{(i)}$, which is a smooth representation of $\G_{n-i}$. We first recall the definitions of the Bernstein-Zelevinsky derivative functors that we need. Let $\M_n\subset \G_n$ denote the mirabolic subgroup inside $\G_n$. We have the decomposition, \[ \M_n = \G_{n-1}\cdot \V_n \] where, $\V_n \cong F^{n-1}$ denotes the unipotent radical of $\M_n$. We define functors, \[ \Phi^- : \Alg(\M_n) \rightarrow \Alg(\M_{n-1}) \] and, \[ \Psi^- : \Alg(\M_n) \rightarrow \Alg(\G_{n-1}) \] as follows.

Given $\tau\in \Alg(\M_n)$ we define, \[  \Phi^-(\tau) = \delta^{-1/2} \tau / \langle \tau(u).x - \psi_n^{\prime}(u).x : x \in \tau, u\in V_n
 \rangle. \] Here, $\delta^{-1/2}$ is the modulus character of $\M_n$ and $\psi^{\prime}_n$ is a non-degenerate character of $\V_n$ defined as, $\psi^{\prime}_n(a_1,a_2,\cdots,a_{n-1})=\psi(a_{n-1})$. Here, $\psi$ is a non-trivial additive character of the field $F$. Note that $\M_{n-1}$ normalizes $\V_n$ and $\psi^{\prime}_n$ and therefore we may consider $\Phi^-(\tau)$ to be a smooth representation of $\M_{n-1}$.

 Similarly we define, \[  \Psi^-(\tau) = \delta^{-1/2} \tau / \langle \tau(u).x - x : x \in \tau, u\in V_n
 \rangle. \] Once again, $\delta^{-1/2}$ denotes the modulus character of $\M_n$. Since $\G_{n-1}$ normalizes $\V_n$ we have that $\Psi^-(\tau)\in \Alg(\G_{n-1})$.

 Now given $\pi\in \Alg(\G_n)$, we define the $i$-th Bernstein-Zelevinsky derivative $\pi^{(i)}\in \Alg(\G_{n-i})$ as, \[ \pi^{(i)} = \Psi^-(\Phi^-)^{i-1}(\pi|_{\M_n}). \]
 
Given, $\pi\in \Alg(\G_n)$, let $k$ be the largest integer such that $\pi^{(k)}\neq 0$. Then we say that the level of $\pi$ is $k$
and write $\lev(\pi)=k$.

We also set the following notation. Let $U_n \subset \G_n$ be the subgroup of upper triangular unipotent matrices. Fix a non-degenerate character $\psi_n$ of $U_n$. Then we denote by $\Pi_n$ the representation of $\M_n$ defined as, \[ \Pi_n = \cind_{U_n}^{\M_n}\psi_n. \]

\subsubsection{Product Rule for Derivatives}

We shall require the following product rule for derivatives (see \cite[Corollary 4.14(c)]{bz77}).
\begin{lemma}
Let $\pi_j \in \Alg(\G_{n_j})$ for $j=1,2,\cdots,k$. Then the representation $(\pi_1 \times \pi_2 \times \cdots \times \pi_k)^{(i)}$ has a filtration whose successive quotients are $\pi_1^{(i_1)} \times \pi_2^{(i_2)} \times \cdots \times \pi_k^{(i_k)}$, where, $0\leq i_t \leq n_t$ for $t=1,2,\cdots,k$ and $i_1+i_2+\cdots+i_k = i$.   
\end{lemma}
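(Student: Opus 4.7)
The plan is to proceed by induction on $k$. The case $k=1$ is vacuous, and the inductive step reduces the general claim to $k=2$: given a filtration of $(\pi_1 \times \tau)^{(i)}$ with subquotients $\pi_1^{(i_1)} \times \tau^{(j)}$ (for $\tau = \pi_2 \times \cdots \times \pi_k$ and $i_1 + j = i$), I would refine each factor $\tau^{(j)}$ via the inductive hypothesis applied to the $k-1$ remaining factors, and use exactness of the functor $\pi_1^{(i_1)} \times (-)$ to lift the refinement back to a filtration of $(\pi_1 \times \cdots \times \pi_k)^{(i)}$ whose successive quotients are $\pi_1^{(i_1)} \times \cdots \times \pi_k^{(i_k)}$ with $i_1+\cdots+i_k=i$ and $0\le i_t \le n_t$.

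For the base case $k=2$, set $n = n_1+n_2$. First I would analyze the restriction $(\pi_1 \times \pi_2)|_{\M_n}$ via the geometric lemma applied to the double coset space $P_{n_1,n_2} \backslash \G_n / \M_n$, which has exactly two double cosets distinguished by whether the last row of a representative lies in the first or the second block. This yields a short exact sequence of smooth $\M_n$-modules realising $(\pi_1\times\pi_2)|_{\M_n}$ as an extension of two pieces, each obtained from $\pi_1$, $\pi_2$, and their mirabolic restrictions by parabolic induction inside $\M_n$. I would then iterate: the functor $\Phi^-$ is exact and intertwines in a controlled way with $\M_n$-parabolic induction, producing a further two-step filtration on each piece at every step; after $i-1$ successive applications of $\Phi^-$ followed by a final application of $\Psi^-$, the original two-step sequence expands into a filtration of length $i+1$ on $(\pi_1\times\pi_2)^{(i)}$ with successive quotients exactly $\pi_1^{(i_1)} \times \pi_2^{(i-i_1)}$ for $0\le i_1 \le i$.

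The main obstacle is the careful bookkeeping of modulus characters. The geometric lemma naturally produces \emph{unnormalized} induction and restriction, whereas the statement demands the normalized products $\pi_1^{(i_1)}\times \pi_2^{(i_2)}$; one must therefore verify that the twists $\delta^{-1/2}$ built into the definitions of $\Psi^-$ and $\Phi^-$ combine correctly with the normalization in $\Ind_{P_{n_1,n_2}}^{\G_n}$ so that each subquotient emerges in normalized form. Since the assertion is precisely Corollary 4.14(c) of \cite{bz77}, in practice I would simply invoke Proposition 4.13 of loc.\ cit., which encapsulates the two-step mirabolic filtration together with the correct normalizations, and then derive the general product rule by induction on $i$ along the lines sketched above.
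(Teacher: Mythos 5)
Your proposal is correct and essentially coincides with the paper's treatment: the paper offers no independent argument but simply cites \cite[Corollary 4.14(c)]{bz77}, and your sketch (the two-orbit mirabolic filtration of $(\pi_1\times\pi_2)|_{\M_n}$ from Proposition 4.13, iterated application of $\Phi^-$ and $\Psi^-$ with the modulus-character normalizations, then induction on $k$ using exactness of parabolic induction) is precisely the Bernstein--Zelevinsky proof being invoked. Nothing further is needed.
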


\subsubsection{Bernstein-Zelevinsky Filtration}

Let $\pi\in \Alg(\G_n)$. By the Bernstein-Zelevinsky filtration (see \cite{bz76}), we have a filtration of submodules $ 0=V_n\subsetneq V_{n-1}\subsetneq \cdots \subsetneq V_0 = \pi|_{\G_{n-1}}$ such that,
\[ V_i/V_{i+1} \cong \nu^{1/2}\pi^{(i+1)}\times \cind_{U_i}^{\G_i}\psi_i. \]
Here $U_i\subset \G_i$ is the subgroup of upper triangular unipotent matrices and $\psi_i$ is a fixed non-degenerate character of $U_i$. 

\subsection{Restriction of a Principal Series to the Mirabolic Subgroup} \label{subsection on restriction of a principal series to the mirabolic subgroup} In this subsection, we want to recall a key lemma (see \cite[Proposition 4.13 (a)]{bz77}) about the restriction of a principal series representation of $\GL_n$ to the mirabolic subgroup. We first recall the definitions of two types of mirabolic induction that we shall need from \cite[Section 4.12]{bz77}). 

Let $\M_m$ denote the mirabolic subgroup sitting inside $\G_m$. Let $\mat_{m,p}$ denote the set of $m \times p$ matrices. Let $0_{m,p}$ denote the zero matrix of order $m \times p$.

Suppose $r$ and $s$ be integers and let $n=r+s$. Let $\rho \in \Alg(\G_r)$ and $\tau \in \Alg(\M_s)$. We define $\rho \btimes \tau \in \Alg(\M_n)$ and $\tau \btimes \rho \in \Alg(\M_n)$ as follows.

\subsubsection{Definition of $\rho \btimes \tau$} Consider the subgroup of $\M_n$ defined as follows. \[ H = \left\{ \begin{pmatrix}
g & u \\
  & m  \\
\end{pmatrix} : g\in \G_{r},m\in \M_{s}, u\in \mat_{r,n-r} \right \}. \] We extend the representation $\rho \otimes \tau$ of $\G_r \times \M_s$ trivially to $H$. We define, \[ \rho \btimes \tau = \Ind_H^{\M_n} (\rho \otimes \tau). \]

\subsubsection{Definition of $\tau \btimes \rho$} Consider the subgroup of $\M_n$ defined as follows. \[ H^{\prime} = \left\{ \begin{pmatrix}
g^{\prime} & u & m^{\prime} \\
  & g & 0_{r,1} \\ 
  &   & 1 \\  
\end{pmatrix} : g\in \G_{r},g^{\prime}\in \G_{s-1}, m^{\prime} \in \mat_{s-1,1}, u\in \mat_{s-1,r} \right \}. \]

Set $\Tilde{\rho}= \nu^{-1/2}\rho$. We define a representation $\pi$ of $H^{\prime}$ acting on the underlying space of $\tau \otimes \rho$ as, 
\[ \pi \begin{pmatrix}
g^{\prime} & u & m^{\prime} \\
  & g & 0_{r,1} \\ 
  &   & 1 \\  
\end{pmatrix}  = \tau\begin{pmatrix}
g^{\prime} & m^{\prime} \\
0_{1,s-1}  & 1 \\  
\end{pmatrix} \otimes \Tilde{\rho}(g). \] Then $\tau \btimes \rho$ is defined as, \[ \tau \btimes \rho = \Ind_{H^{\prime}}^{\M_n} \pi. \]

We are now in a position to state the required lemma. The lemma below tells us how a principal series representation decomposes upon restriction to the mirabolic subgroup.

\begin{lemma} \label{lemma about restriction to the mirabolic subgroup}\cite[Proposition 4.13(a)]{bz77}
 Let $\pi_1$ and $\pi_2$ be smooth representations of $\G_{n_1}$ and $\G_{n_2}$ respectively. Then \[ 0\rightarrow \pi_1|_{\M_{n_1}}\btimes\pi_2 \rightarrow (\pi_1\times\pi_2)|_{\M_{n_1+n_2}} \rightarrow \pi_1\btimes\pi_2|_{\M_{n_2}} \rightarrow 0. \]     
Here $\M_{n_1}$ and $\M_{n_2}$ are the mirabolic subgroups of $\G_{n_1}$ and $\G_{n_2}$ respectively.
 
\end{lemma}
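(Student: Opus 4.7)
The plan is to apply the geometric lemma (a form of Mackey theory for smooth representations of $p$-adic groups) to decompose the restriction $(\pi_1 \times \pi_2)|_{\M_n}$ according to $P$-orbits on $\G_n/\M_n$, where $P = P_{n_1, n_2}$ and $n = n_1 + n_2$. The first step is to identify $\M_n \backslash \G_n$ with $F^n \setminus \{0\}$ as a right $\G_n$-set via the last-row map $g \mapsto e_n g$, where $e_n = (0,\ldots,0,1)$ is stabilized exactly by $\M_n$. Under this identification, the double cosets $P \backslash \G_n / \M_n$ correspond bijectively to $P$-orbits on $F^n \setminus \{0\}$.

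Writing a vector as $v = (v_1, v_2)$ with $v_i \in F^{n_i}$ and computing $(v_1, v_2) \cdot \begin{pmatrix} A & B \\ 0 & C \end{pmatrix} = (v_1 A, v_1 B + v_2 C)$, one sees that there are exactly two $P$-orbits on $F^n \setminus \{0\}$: the closed orbit $\{(0, v_2) : v_2 \neq 0\}$ containing $e_n$, and the open orbit $\{v : v_1 \neq 0\}$. The corresponding two-step filtration on $\Ind_{P}^{\G_n}(\pi_1 \otimes \pi_2)|_{\M_n}$ has the open-orbit piece as a subobject (namely, sections vanishing on the closed preimage) and the closed-orbit piece as the quotient (obtained by restriction to the closed preimage). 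This immediately yields a short exact sequence of exactly the stated shape, and it remains only to identify the two pieces. For the closed orbit, the stabilizer of $e_n$ in $P$ is exactly the subgroup $H$ appearing in the definition of $\rho \btimes \tau$, so Mackey gives the quotient as $\Ind_H^{\M_n}(\pi_1 \otimes \pi_2|_{\M_{n_2}}) = \pi_1 \btimes \pi_2|_{\M_{n_2}}$ once one verifies that the normalizing modulus character contributions cancel on $H$. For the open orbit, I would take the double coset representative $g_0$ given by the permutation matrix interchanging the $n_1$-th and $n$-th standard basis vectors, so that $e_n g_0 = (e_{n_1}, 0)$ lies in the open orbit; computing $g_0^{-1} P g_0 \cap \M_n$ produces precisely the subgroup $H'$ defining $\tau \btimes \rho$, and the residual discrepancy between $\delta_P^{1/2}$ and $\delta_{\M_n}^{1/2}$ (after conjugation by $g_0$) produces a twist by $\nu^{-1/2}$ on the $\GL_{n_1}$-factor, which is exactly the twist $\tilde{\rho} = \nu^{-1/2}\rho$ built into the definition of $\tau \btimes \rho$. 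This identifies the subobject as $\pi_1|_{\M_{n_1}} \btimes \pi_2$.

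The main technical obstacle is the careful bookkeeping of the normalizing modulus characters. The induction $\pi_1 \times \pi_2$ carries the factor $\delta_P^{1/2}$, the mirabolic induction defining $\rho \btimes \tau$ is unnormalized, and the asymmetric appearance of the twist $\tilde{\rho} = \nu^{-1/2} \rho$ on only one side of the exact sequence has to be explained in terms of the explicit action of $g_0$ on the Levi factor of the stabilizer. Concretely, this amounts to computing $|\det A|$ and $|\det C|$ contributions from the modular functions of $P$, of $\M_n$, and of the two stabilizer subgroups, and confirming the cancellations; the underlying geometric decomposition itself is clean and follows directly from the two-orbit structure of $F^n \setminus \{0\}$ under the right $P$-action.
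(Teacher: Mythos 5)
Your argument is correct in outline, and it is essentially the original proof of Bernstein--Zelevinsky: the paper itself does not prove this lemma but simply quotes \cite[Proposition 4.13(a)]{bz77}, and the two-orbit Mackey filtration you describe (identifying $\M_n\backslash\G_n$ with $F^n\setminus\{0\}$ via the last row, observing the open orbit $\{v_1\neq 0\}$ and the closed orbit $\{v_1=0\}$ for the right $P_{n_1,n_2}$-action, and computing the stabilizers $P\cap\M_n=H$ and $g_0^{-1}Pg_0\cap\M_n=H'$) is exactly the standard route; your stabilizer identifications are correct, with the harmless caveat that your particular representative $g_0$ conjugates the $\G_{n_2}$-block of $H'$ into the Levi only up to an inner permutation. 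One point worth making explicit when you do the deferred bookkeeping: the subobject you construct is the space of sections compactly supported on the open orbit, i.e.\ a \emph{compact} induction $\cind_{H'}^{\M_n}$, and since $H'\backslash\M_n$ (unlike $H\backslash\M_n$, which is compact) is not compact, this does not coincide with the full smooth induction; so the mirabolic product $\tau\btimes\rho$ entering the left-hand term must be taken in its compactly induced form, as in Bernstein--Zelevinsky, for the identification to be literal. With that convention fixed, the remaining modulus-character verification you outline (matching $\delta_P^{1/2}$ against the normalizations on $H$ and $H'$, and seeing the twist $\nu^{-1/2}$ on the $\G_{n_2}$-factor emerge on the open orbit) goes through and completes the proof.
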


\subsection{Second Adjointness and the Identity orbit in the Geometric Lemma} \label{section on geometric lemma}

Let $\pi_1\in \Alg(\G_{n_1})$ and $\pi_2\in \Alg(\G_{n_2})$ for some $n_1,n_2 \in \mathbb{Z}_{\geq 0}$. Set $n=n_1+n_2$ and consider the representation $\pi_1\times \pi_2$ of $\G_{n}$. The geometric lemma (see \cite{bz77}) describes the factors in a composition series of the Jacquet module $r_{(n_1,n_2)}(\pi_1\times \pi_2)$. Each factor in the composition series of $r_{(n_1,n_2)}(\pi_1\times \pi_2)$ corresponds to a double coset representative of $P_{n_1,n_2}\setminus \G_{n}/P_{n_1,n_2}$ where, $P_{n_1,n_2}$ denotes the parabolic inside $\G_n$ corresponding to the partition $(n_1,n_2)$. Here, the term $\pi_1\otimes \pi_2$ in the composition series of $r_{(n_1,n_2)}(\pi_1\times \pi_2)$ corresponds to the identity double coset. By the geometric lemma the term $\pi_1\otimes \pi_2$ occurs as the quotient of $r_{(n_1,n_2)}(\pi_1\times \pi_2)$.

We want to understand the structure of the opposite Jaquet module of $\ropp_{(n_1,n_2)}(\pi_1\times \pi_2)$. Since we know that, \[ \ropp_{(n_1,n_2)}(\pi_1\times \pi_2) \simeq (r_{(n_1,n_2)}((\pi_1\times \pi_2)^{\vee})))^{\vee}\] the above discussion leads to the following corollary. 

\begin{corollary}
 Let $\pi_1\in \Alg(\G_{n_1})$ and $\pi_2\in \Alg(\G_{n_2})$ for some $n_1,n_2 \in \mathbb{Z}_{\geq 0}$. Then $\pi_1\otimes \pi_2$ occurs as a submodule of $\ropp_{(n_1,n_2)}(\pi_1\times \pi_2)$.  
\end{corollary}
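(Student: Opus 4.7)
The plan is to deduce this corollary as a direct duality consequence of the geometric lemma statement already recalled just above. The claim about the opposite Jacquet module concerns a submodule, whereas the geometric lemma is phrased in terms of a quotient; the strategy is simply to run the quotient statement on the contragredient $(\pi_1\times\pi_2)^{\vee}$ and then dualize back.

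First I would use the well-known isomorphism $(\pi_1\times\pi_2)^{\vee}\simeq \pi_1^{\vee}\times \pi_2^{\vee}$, which comes from the compatibility of normalized parabolic induction with the smooth dual. Then, applying the identity-orbit portion of the geometric lemma as recalled in Section \ref{section on geometric lemma} to the representations $\pi_1^{\vee}\in\Alg(\G_{n_1})$ and $\pi_2^{\vee}\in\Alg(\G_{n_2})$, one obtains a surjection
\[
r_{(n_1,n_2)}\bigl((\pi_1\times\pi_2)^{\vee}\bigr)\twoheadrightarrow \pi_1^{\vee}\otimes \pi_2^{\vee}.
\]

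Next I would apply the smooth contragredient functor to this surjection. Since the contragredient is an exact contravariant functor on smooth representations and is involutive on admissible representations (which applies to the finite-length factor $\pi_1^{\vee}\otimes \pi_2^{\vee}$ when $\pi_1,\pi_2$ are admissible; the general smooth case reduces to the admissible one because only finitely generated subobjects are involved in realizing the desired submodule), one obtains an injection
\[
\pi_1\otimes \pi_2 \hookrightarrow \bigl(r_{(n_1,n_2)}((\pi_1\times\pi_2)^{\vee})\bigr)^{\vee}.
\]
Finally, by the second-adjointness relation recalled in Section \ref{section two},
\[
\ropp_{(n_1,n_2)}(\pi_1\times\pi_2)\simeq \bigl(r_{(n_1,n_2)}((\pi_1\times\pi_2)^{\vee})\bigr)^{\vee},
\]
so composing with this identification realizes $\pi_1\otimes\pi_2$ as a submodule of $\ropp_{(n_1,n_2)}(\pi_1\times\pi_2)$, as desired.

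There is no real obstacle here; the only mild subtlety is making sure that dualizing the surjection produces an honest injection rather than merely an injection into a double dual, which requires us to identify $\pi_1\otimes\pi_2$ with its double contragredient. This is automatic in the admissible setting and can be arranged in the general smooth setting by working with finitely generated subrepresentations, since the construction is functorial in $\pi_1$ and $\pi_2$.
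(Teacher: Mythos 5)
Your argument is essentially the paper's own proof written out in full: the paper obtains the corollary precisely by combining the identity-orbit quotient statement of the geometric lemma with the relation $\ropp_{(n_1,n_2)}(\pi)\simeq\bigl(r_{(n_1,n_2)}(\pi^{\vee})\bigr)^{\vee}$, i.e.\ by dualizing, and you have simply made the intermediate steps ($(\pi_1\times\pi_2)^{\vee}\simeq\pi_1^{\vee}\times\pi_2^{\vee}$, dualizing the surjection, identifying the double dual) explicit. The only caveat — equally implicit in the paper's formulation, which states the corollary for arbitrary smooth representations — is that these identifications are literally justified under admissibility (your "finitely generated" reduction is not quite the right fix, though the needed injection $\pi_1\otimes\pi_2\hookrightarrow(\pi_1^{\vee}\otimes\pi_2^{\vee})^{\vee}$ does hold in general since smooth duals separate points), and admissibility holds in all the paper's applications.
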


\subsection{Kunneth Formula and a lemma on Ext Vanishing}

We shall require the following Kunneth formula in the course of our calculations of $\Ext$ modules. 
\begin{theorem}(\cite[Theorem 3.5]{pr24}) \label{kunneth formula}
Let $H_1$ and $H_2$ be $p$-adic groups and suppose that $H_1$ is reductive. Let $E_1$ and $F_1$ (resp. $E_2$ and $F_2$) be smooth representations of $H_1$ (resp. $H_2$). If $E_1$ and $F_1$ have finite lengths then, \[ \Ext^i_{H_1\times H_2} (E_1\otimes E_2,F_1\otimes F_2) = \sum_{k=0}^i \Ext^k_{H_1} (E_1,F_1) \otimes \Ext^{i-k}_{H_2} (E_2, F_2). \]
\end{theorem}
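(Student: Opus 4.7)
The plan is to compute $\Ext^\bullet_{H_1 \times H_2}(E_1 \otimes E_2, F_1 \otimes F_2)$ by resolving $E_1 \otimes E_2$ as the tensor product of two resolutions taken separately. Since $H_1$ is reductive $p$-adic and $E_1$ has finite length, one invokes the finite projective dimension of $\Alg(H_1)$ (via the Schneider--Stuhler resolution, or equivalently via the Bernstein decomposition and its component-wise projectives) to obtain a \emph{finite} projective resolution $P_\bullet \to E_1$ where each $P_i$ is a finite direct sum of representations of the form $\cind_K^{H_1}(V)$ for $K \subset H_1$ a compact open subgroup and $V$ a finite-dimensional smooth $K$-representation. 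Independently, choose any projective resolution $Q_\bullet \to E_2$ in $\Alg(H_2)$ by projectives $Q_j$ of the same type $\cind_{K'}^{H_2}(W)$, with $W$ finite-dimensional; this resolution need not be finite.

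Two homological inputs must then be verified. First, the total complex $\mathrm{Tot}(P_\bullet \otimes Q_\bullet) \to E_1 \otimes E_2$ is a projective resolution in $\Alg(H_1 \times H_2)$: exactness is immediate from exactness of $\otimes_{\mathbb{C}}$, and projectivity of each $P_i \otimes Q_j$ follows from the natural isomorphism
\[ \cind_{K_1}^{H_1}(V_1) \otimes \cind_{K_2}^{H_2}(V_2) \;\cong\; \cind_{K_1 \times K_2}^{H_1 \times H_2}(V_1 \otimes V_2), \]
exhibiting the tensor product as compactly induced from a finite-dimensional smooth representation of the compact open subgroup $K_1 \times K_2$. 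Second, one needs the Hom--tensor identity
\[ \Hom_{H_1 \times H_2}(P_i \otimes Q_j, F_1 \otimes F_2) \;\cong\; \Hom_{H_1}(P_i, F_1) \otimes \Hom_{H_2}(Q_j, F_2), \]
which by Frobenius reciprocity on each factor reduces to the analogous statement for $K_1 \times K_2$ acting on $V_1 \otimes V_2$; here the finite-dimensionality of $V_1$ and $V_2$ together with the semisimplicity of smooth $K$-representations lets one commute $\Hom$ with the external tensor product cleanly.

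Combining the two, $\Ext^\bullet_{H_1 \times H_2}(E_1 \otimes E_2, F_1 \otimes F_2)$ is computed as the cohomology of the total complex of the double complex $\Hom_{H_1}(P_\bullet, F_1) \otimes \Hom_{H_2}(Q_\bullet, F_2)$. The purely algebraic Künneth formula for tensor products of cochain complexes of $\mathbb{C}$-vector spaces then produces the stated decomposition; since we work over a field there are no $\mathrm{Tor}^1$ correction terms.

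The main obstacle is the Hom--tensor compatibility in step two: one has to be careful that pushing $\Hom$ across the compact induction is compatible with the external tensor product on the source side. This is where the hypotheses enter in a crucial way. Finite length of $E_1$ lets one take $P_\bullet$ to be \emph{finite} and each $P_i$ \emph{finitely generated}, which is what allows $\Hom_{H_1}(P_i, -)$ to commute with tensoring against a (possibly large) vector space. Finite length of $F_1$ then guarantees finite-dimensionality of each $\Ext^k_{H_1}(E_1, F_1)$, making the sum on the right-hand side well-defined. Once these finiteness checks are handled, the rest is standard homological bookkeeping.
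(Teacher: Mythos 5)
You should note at the outset that the paper does not prove this statement at all: it is quoted verbatim from Prasad \cite[Theorem 3.5]{pr24}, so there is no internal proof to compare with. Your strategy — tensor a resolution of $E_1$ by finitely generated projectives $\cind_K^{H_1}(V)$ with a resolution of $E_2$, identify $\cind_{K_1}^{H_1}(V_1)\otimes\cind_{K_2}^{H_2}(V_2)\cong\cind_{K_1\times K_2}^{H_1\times H_2}(V_1\otimes V_2)$, establish a Hom--tensor identity, and conclude by the algebraic K\"unneth formula over $\mathbb{C}$ — is the standard route to this result and is correct in outline.

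There is, however, a genuine gap exactly at the step you single out as the main obstacle. First, an arbitrary smooth $E_2$ (and the kernels in its resolution) need not be finitely generated, so you cannot take each $Q_j$ to be a single $\cind_{K'}^{H_2}(W)$ with $W$ finite-dimensional; you must allow possibly infinite direct sums of such modules. Once you do, the identity $\Hom_{H_1\times H_2}(P_i\otimes Q_j,F_1\otimes F_2)\cong\Hom_{H_1}(P_i,F_1)\otimes\Hom_{H_2}(Q_j,F_2)$ is no longer a consequence of finite-dimensionality of $V_1,V_2$ and semisimplicity of smooth representations of compact groups: those give the identity only for a single induced term, and for $Q_j=\bigoplus_\alpha \cind_{K_\alpha}^{H_2}(W_\alpha)$ the left side is $\prod_\alpha\bigl(\Hom_{H_1}(P_i,F_1)\otimes\Hom_{K_\alpha}(W_\alpha,F_2)\bigr)$ while the right side is $\Hom_{H_1}(P_i,F_1)\otimes\prod_\alpha\Hom_{K_\alpha}(W_\alpha,F_2)$, and passing the tensor factor across the infinite product requires $\Hom_{H_1}(P_i,F_1)$ to be finite-dimensional. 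That finite-dimensionality is precisely what the hypotheses buy: $F_1$ of finite length over the reductive group $H_1$ is admissible, and $P_i$ is finitely generated (Noetherianity of $\Alg(H_1)$ for reductive $H_1$), so $\Hom_{H_1}(\cind_{K}^{H_1}(V),F_1)\cong\Hom_K(V,F_1)$ is finite-dimensional. In your write-up the role of finite length of $F_1$ is misplaced: it is not needed to make the right-hand sum "well-defined" (tensor products of arbitrary vector spaces are defined, and over a field the K\"unneth formula needs no finiteness), nor does finite generation of $P_i$ alone rescue the Hom--tensor step — it gives $\Hom_{H_1}(P_i,F_1\otimes X)\cong\Hom_{H_1}(P_i,F_1)\otimes X$ for a vector space $X$, but the adjunction reduces the problem to $\Hom_{H_2}(Q_j,\Hom_{H_1}(P_i,F_1)\otimes F_2)\cong\Hom_{H_1}(P_i,F_1)\otimes\Hom_{H_2}(Q_j,F_2)$, which again fails for infinitely generated $Q_j$ unless the multiplicity space is finite-dimensional. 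Two minor points: the finiteness of the resolution $P_\bullet$ is not actually needed (the total complex is degreewise a finite sum since both resolutions live in non-negative degrees), and if you do truncate, the last term is only a direct summand of a finite sum of $\cind_K^{H_1}(V)$'s, which is harmless.
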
 The above Kunneth formula when combined with cuspidal support considerations allows us to conclude $\Ext$ vanishing in some cases as in the lemma below. The lemma below is a slight reformulation of \cite[Lemma 2.4]{ch22}.

\begin{lemma} \label{lemma about standard argument on ext vanishing}
Let $\pi_1$ be an irreducible representation of $\G_{n-k}$ and $\pi_2$ be a smooth representation of $\G_{k}$ not necessarily of finite length. Let $\pi$ be an irreducible representation of $\G_n$. Let $\sigma$ be a cuspidal representation such that $\sigma\in \csupp(\pi_1)$ but $\sigma\not \in \csupp(\pi)$. Then for all integers $i\geq 0$,
\[ \Ext^i_{\G_{n}}(\pi_1\times \pi_2,\pi) = 0.\]  
\end{lemma}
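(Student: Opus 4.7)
The plan is to reduce the computation on $\G_n$ to one on the Levi $M := \G_{n-k} \times \G_k$ via second adjointness, and then combine the Bernstein decomposition with the Kunneth formula (Theorem \ref{kunneth formula}) to force vanishing. The key preliminary remark is that normalized parabolic induction $\Ind_{P_{n-k,k}}^{\G_n}$ is exact and admits the exact opposite Jacquet functor $\ropp_{(n-k,k)}$ as its right adjoint (second adjointness); hence it sends projectives to projectives. Feeding a projective resolution of $\pi_1 \otimes \pi_2$ through this adjunction yields, for every $i \geq 0$, a natural isomorphism
\[ \Ext^i_{\G_n}(\pi_1 \times \pi_2, \pi) \;\simeq\; \Ext^i_{M}\bigl(\pi_1 \otimes \pi_2,\; \ropp_{(n-k,k)}(\pi)\bigr). \]

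Next, because $\pi$ is irreducible and therefore admissible, $\ropp_{(n-k,k)}(\pi)$ is of finite length as an $M$-module, with each irreducible subquotient of the form $\tau_1 \otimes \tau_2$ satisfying $\csupp(\tau_1) \cup \csupp(\tau_2) = \csupp(\pi)$. In particular $\sigma \notin \csupp(\tau_1)$ for every such subquotient, while $\sigma \in \csupp(\pi_1)$ by hypothesis, so $\pi_1$ and $\tau_1$ lie in disjoint Bernstein blocks of $\Alg(\G_{n-k})$; this gives $\Ext^j_{\G_{n-k}}(\pi_1, \tau_1) = 0$ for all $j \geq 0$. Applying Theorem \ref{kunneth formula} with the finite-length representations $\pi_1$ and $\tau_1$ on the $\G_{n-k}$ factor, and with $\pi_2, \tau_2$ on the $\G_k$ factor, then yields
\[ \Ext^i_{M}\bigl(\pi_1 \otimes \pi_2,\; \tau_1 \otimes \tau_2\bigr) \;=\; 0 \]
for every irreducible subquotient and every $i \geq 0$.

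Finally, a routine induction on the composition length of $\ropp_{(n-k,k)}(\pi)$, using the long exact sequences of $\Ext$ attached to the short exact sequences extracted from a composition series, promotes this factor-wise vanishing to $\Ext^i_{M}(\pi_1 \otimes \pi_2,\; \ropp_{(n-k,k)}(\pi)) = 0$ for all $i \geq 0$, which together with the second-adjointness isomorphism completes the proof. The main point to verify with care is the Bernstein block disjointness step: blocks of $\Alg(\G_{n-k})$ are indexed by inertial equivalence classes of cuspidal supports, so the hypothesis $\sigma \in \csupp(\pi_1)$ with $\sigma \notin \csupp(\pi)$ must genuinely rule out that $\pi_1$ and $\tau_1$ share an inertial class; this is the one input where the precise form of the cuspidal support hypothesis needs to be engaged.
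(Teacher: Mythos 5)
Your overall route is exactly the paper's: reduce by second adjointness to $\Ext^i_{\G_{n-k}\times\G_k}(\pi_1\otimes\pi_2,\ropp_{(n-k,k)}(\pi))$ (using that normalized induction is exact with exact right adjoint, hence preserves projectives), note that $\ropp_{(n-k,k)}(\pi)$ has finite length with every irreducible subquotient $\tau_1\otimes\tau_2$ satisfying $\csupp(\tau_1)\cup\csupp(\tau_2)=\csupp(\pi)$, kill each such factor by the Kunneth formula via vanishing on the $\G_{n-k}$ side, and conclude by d\'evissage along a composition series. The one step that does not hold as you stated it is the justification of $\Ext^j_{\G_{n-k}}(\pi_1,\tau_1)=0$: you assert that $\pi_1$ and $\tau_1$ lie in disjoint Bernstein blocks, and in your closing paragraph you say the hypothesis must rule out that they share an inertial class. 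It does not. Blocks are indexed by cuspidal supports only up to unramified twist, and the hypothesis gives $\sigma\in\csupp(\pi_1)$, $\sigma\notin\csupp(\tau_1)$, while nothing prevents $\nu^m\sigma\in\csupp(\tau_1)$ for some $m\neq 0$; already for $n-k=1$ one can have $\pi_1=\sigma$ and $\tau_1=\nu\sigma$, which lie in the same block. This is not a corner case: in the intended applications (e.g.\ the proof of Lemma \ref{lemma for the key reduction step}) the two sides typically live on the very same cuspidal lines, so block disjointness is precisely the tool one cannot use there.

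The vanishing you need is nevertheless true, for a finer reason, and this is what the paper's remark means by ``comparing cuspidal supports'': two irreducible representations of $\G_{n-k}$ whose cuspidal supports differ as multisets have different characters of the Bernstein center (different infinitesimal characters), and since the two actions of the Bernstein center on $\Ext^j_{\G_{n-k}}(\pi_1,\tau_1)$ --- through the first and through the second argument --- coincide, these Ext groups vanish in all degrees. Here $\csupp(\pi_1)\neq\csupp(\tau_1)$ is immediate from $\sigma\in\csupp(\pi_1)$ and $\sigma\notin\csupp(\tau_1)$, the latter following from $\csupp(\tau_1)\subseteq\csupp(\pi)$ as you correctly argued. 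Replacing your block-disjointness step by this Bernstein-center (infinitesimal character) argument, and keeping everything else as written, yields precisely the argument the paper sketches in the remark following the lemma.
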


\begin{remark}
The above lemma codifies a standard argument that we shall use. In the above lemma, since $\pi_2$ is not necessarily of finite length we cannot directly conclude $\Ext$ vanishing by cuspidal support considerations. Instead we first apply second adjointness and the Kunneth formula for each simple composition factor of the opposite Jacquet module of $\pi$. Now by comparing cuspidal supports at $\sigma\in \csupp(\pi_1)$ and the corresponding factor in the opposite Jacquet module of $\pi$ we conclude $\Ext$ vanishing.
\end{remark}

\subsection{Generic Branching Law}

We recall the $\Hom$ and $\Ext$ branching law for generic representations. This will form the base case for the inductive proof of our theorems.

\begin{theorem}(\cite[Theorem 4.1]{cs21}) \label{generic branching law}
Let $n\geq 0$ be an integer. Let $\pi_1$ and $\pi_2$ be irreducible generic representations of $\G_n$ and $\G_{n-1}$ respectively. Then
\[ \Hom_{\G_{n-1}}(\pi_1,\pi_2) = \mathbb{C},\] and for all integers $i\geq 1$,
\[ \Ext^i_{\G_{n-1}}(\pi_1,\pi_2) = 0.\]
\end{theorem}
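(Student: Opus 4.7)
The plan is to run the Bernstein--Zelevinsky filtration of $\pi_1|_{\G_{n-1}}$ against $\pi_2$ and show that only one piece contributes non-trivially, using projectivity of the Gelfand--Graev representation together with an induction on $n$. The $\Hom$-assertion $\Hom_{\G_{n-1}}(\pi_1,\pi_2)=\mathbb{C}$ is classical (Jacquet--Piatetski-Shapiro--Shalika, cf.\ \cite[Theorem 3]{pr93}), so the real content is $\Ext^i_{\G_{n-1}}(\pi_1,\pi_2)=0$ for all $i\geq 1$.

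By Section \ref{subsection on derivatives}, $\pi_1|_{\G_{n-1}}$ has a finite filtration with successive quotients $Q_i = \nu^{1/2}\pi_1^{(i+1)}\times \Pi_i$ for $0 \leq i \leq n-1$, where $\Pi_i = \cind_{U_i}^{\G_i}\psi_i$. Genericity of $\pi_1$ forces $\lev(\pi_1)=n$ and $\pi_1^{(n)}=\mathbb{C}$, so the unique irreducible submodule at the top of the filtration is $Q_{n-1}\cong \Pi_{n-1}$. The associated long exact sequences in $\Ext^*_{\G_{n-1}}(-,\pi_2)$ reduce the problem to computing, for each $i$,
\[ \Ext^*_{\G_{n-1}}(Q_i,\pi_2) \;\cong\; \Ext^*_{\G_{n-1-i}\times \G_i}\bigl(\nu^{1/2}\pi_1^{(i+1)}\otimes \Pi_i,\; \ropp_{(n-1-i,i)}(\pi_2)\bigr), \]
via exactness of parabolic induction and second adjointness. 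Since $\Pi_i$ is projective in $\Alg(\G_i)$, the Kunneth formula (Theorem \ref{kunneth formula}) applied to each composition factor of the opposite Jacquet module collapses this to a sum of terms of the form $\Ext^*_{\G_{n-1-i}}(\nu^{1/2}\pi_1^{(i+1)},\sigma)$, where $\sigma$ is a $\psi_i$-Whittaker piece of the $\G_i$-factor. For $i=n-1$ this immediately gives $\Wh(\pi_2)=\mathbb{C}$ in degree $0$ and $0$ in higher degrees, providing the unique generator of $\Hom_{\G_{n-1}}(\pi_1,\pi_2)$.

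For the remaining pieces $i<n-1$, I would use Rodier-type heredity together with the geometric lemma to show that after taking the $\psi_i$-Whittaker functional on the $\G_i$-factor, each constituent $\sigma$ is either zero or a generic representation of $\G_{n-1-i}$; similarly the derivative $\pi_1^{(i+1)}$ of the generic $\pi_1$ is, up to cuspidal-support refinements, a sum of generic representations. The inductive hypothesis on $\G_{n-1-i}$ then yields $\Ext^j=0$ for $j\geq 1$ on each piece, while Lemma \ref{lemma about standard argument on ext vanishing} combined with a cuspidal-support comparison eliminates any spurious $\Hom$ in degree zero on the non-top pieces.

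The main obstacle will be this last step: verifying that the ``partial Whittaker'' constituents $\sigma$ genuinely decompose into generic representations of $\G_{n-1-i}$ in a way compatible with the inductive hypothesis, and in particular that they do not produce an extra $\Hom$ for some $i<n-1$ that would clash with the one-dimensionality already captured by the top piece. A useful sanity check is the Euler--Poincare formula of \cite{pr18}, which gives $\sum_i (-1)^i \dim\Ext^i = \dim\Wh(\pi_1)\cdot \dim\Wh(\pi_2)=1$; this is consistent with the claim but by itself does not force individual $\Ext$-vanishing, so the inductive bookkeeping above remains essential.
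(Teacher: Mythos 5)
The paper does not prove this statement; it is quoted verbatim from Chan--Savin \cite[Theorem 4.1]{cs21}, so the only thing to compare your sketch against is whether it could actually deliver that theorem. It cannot in its present form: the decisive step is exactly where you defer to ``inductive bookkeeping''. After second adjointness and K\"unneth, the non-top pieces of the Bernstein--Zelevinsky filtration contribute terms of the form $\Ext^{j}_{\G_{n-1-i}}(\nu^{1/2}\pi_1^{(i+1)},\sigma)$ with \emph{both} arguments living on the same group $\G_{n-1-i}$. Your inductive hypothesis is the branching statement for the pair $(\G_m,\G_{m-1})$ and says nothing about same-group $\Ext$, so it does not apply; worse, the vanishing you need is simply false. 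Already for $n=2$, take $\pi_1=\chi_1\times\chi_2$ irreducible generic and $\pi_2=\nu^{1/2}\chi_1$: the filtration is $0\to \cind_{U_1}^{\G_1}\psi_1\to \pi_1|_{\G_1}\to \nu^{1/2}\pi_1^{(1)}\to 0$, and the quotient piece contributes $\Ext^1_{\G_1}(\nu^{1/2}\chi_1,\nu^{1/2}\chi_1)\cong\mathbb{C}\neq 0$ (and an extra $\Hom$ as well). The theorem still holds only because the connecting homomorphism $\Hom_{\G_1}(\cind_{U_1}^{\G_1}\psi_1,\pi_2)\to\Ext^1_{\G_1}(\nu^{1/2}\pi_1^{(1)},\pi_2)$ is an isomorphism, i.e.\ the non-top pieces genuinely contribute and must be cancelled by nontrivial boundary maps. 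This is precisely the ``unpleasant situation'' the paper's own Remarks on the Proof warn about, and it is why Chan--Savin's argument is substantially more delicate than a piecewise-vanishing induction; your strategy of showing ``only one piece contributes'' fails at the first nontrivial case.

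Two secondary points. First, the projectivity of $\Pi_i=\cind_{U_i}^{\G_i}\psi_i$ that you invoke is itself a nontrivial theorem of Chan--Savin from the very source \cite{cs21} being reproved, so even the treatment of the top piece is borrowed rather than established. Second, the claims that the constituents $\sigma$ surviving the Whittaker functional on the $\G_i$-factor, and the constituents of $\pi_1^{(i+1)}$, are generic are not correct: derivatives of generic representations and Jacquet modules of generic representations have non-generic composition factors in general (e.g.\ $\chi_1\times\nu\chi_1$ occurring inside a derivative has a one-dimensional constituent). The Euler--Poincar\'e identity you mention is, as you say, only a consistency check; the actual content of the theorem lies in controlling the cancellations your outline leaves open.
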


The above theorem about higher $\Ext$ vanishing was later extended to the case when $\pi_1$ and the dual of $\pi_2$ are standard representations by K.Y. Chan (see \cite[Theorem 1.1]{ch23}).

\section{On Classes of Representations of $\GL_n(F)$} \label{section three}

In this Section, we introduce various classes of representations of the $p$-adic general linear group. Let $\Irr^c(\G_n)$ (resp. $\Irr^u(\G_n)$) denote the collection of irreducible cuspidal (resp. unitary) representations of $\G_n$. Let $\Irr^{c,u}(\G_n) = \Irr^c(\G_n) \cap \Irr^u(\G_n)$ denote the class of cuspidal unitary representations. We set $\Irr^c = \cup_{n\geq 0} \Irr^c(\G_n)$, $\Irr^u = \cup_{n\geq 0} \Irr^u(\G_n)$ and $\Irr^{c,u} = \cup_{n\geq 0} \Irr^{c,u}(\G_n)$.

\subsection{On Speh and Arthur Representations} \label{subsection on definition of speh and arthur type representations}

The Speh representations are defined as follows. Let $\rho \in \Irr^{c,u}$. For $a\in \mathbb{Z}_{\geq 0}$ let $\Delta(\rho,a)$ be the segment $\Delta(\rho,a)=[{-\frac{(a-1)}{2}},{\frac{(a-1)}{2}}]_{\rho}$. We let $\delta_{\rho}(a)$ denote the square integrable representation $\delta_{\rho}(a)=Q(\Delta(\rho,a))$.

Let $b$ be a non-negative integer. The product, \[ \nu^{\frac{(b-1)}{2}}\delta_{\rho}(a) \times \nu^{\frac{(b-1)}{2}-1}\delta_{\rho}(a) \times \cdots \times \nu^{-\frac{(b-1)}{2}}\delta_{\rho}(a) \] has a unique irreducible quotient which we denote as $u_{\rho}(a,b)$. The irreducible representations of the form $u_{\rho}(a,b)$ are known as Speh representations.
\begin{definition} We say that an irreducible representation $\pi$ of $\G_n$ is of Arthur type if there exist Speh representations $\pi_1,\pi_2,\cdots,\pi_k$ such that, \[ \pi = \pi_1 \times \pi_2 \times \cdots \times \pi_k. \] 
\end{definition}

\subsubsection{Derivatives of Speh Representations} We shall gather some information about the derivatives of Speh representations. The knowledge of cuspidal support of the derivatives of Speh representations will be helpful in allowing us to conclude $\Ext$ vanishing.

The lemma below follows from \cite[Theorem 14]{lm14} when it is reformulated in terms of the Langlands classification. We refer to the discussion after Theorem 14 in loc. sit. Given $\pi\in \Alg(\G_n)$, recall that by the level of $\pi$ (denoted $\lev(\pi)$) we mean the largest integer $k$ such that $\pi^{(k)}\neq 0$.

\begin{lemma} \label{derivative of speh representations}
 Let $\pi = u_{\rho}(a,b)$ be a Speh representation. Then the level of $\pi$ is equal to $n(\rho)a$, that is, $\lev(\pi)=n(\rho)a$. 
 \begin{enumerate}
     \item If $k<\lev(\pi)$ then $\nu^{(a+b-2)/2}\rho \in \csupp(\pi^{(k)})$.
     \item If $k=\lev(\pi)$ then $\pi^{(k)} = \nu^{-1/2}u_{\rho}(a,b-1)$.
 \end{enumerate}
\end{lemma}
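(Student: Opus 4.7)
My plan is to argue by induction on $b$, using the Langlands realization of $u_{\rho}(a,b)$ as a quotient of a product of shifted discrete series and invoking the product rule for Bernstein-Zelevinsky derivatives (the lemma in Section 2.4.1). The base case $b = 1$ is $u_\rho(a,1) = \delta_\rho(a)$, for which the derivative structure is classical Zelevinsky theory: since $\delta_\rho(a) = Q([-\frac{a-1}{2},\frac{a-1}{2}]_\rho)$ is built from a single segment in $\rho$, its non-vanishing BZ derivatives live exactly at the block levels $n(\rho), 2n(\rho),\ldots, n(\rho)a$, the highest being a character of $\G_0$ which we identify with $\nu^{-1/2}u_\rho(a,0)$, and every intermediate derivative retains $\nu^{(a-1)/2}\rho$ in its cuspidal support (since truncating the segment from the bottom does not remove its top). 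This matches both conclusions in the base case.

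For the inductive step, set
\[ \sigma \;=\; \nu^{(b-1)/2}\delta_\rho(a) \times \nu^{(b-3)/2}\delta_\rho(a) \times \cdots \times \nu^{-(b-1)/2}\delta_\rho(a), \]
so that $u_\rho(a,b)$ is the unique irreducible quotient of $\sigma$. Since $\pi\mapsto \pi^{(k)}$ is exact, $u_\rho(a,b)^{(k)}$ is a quotient of $\sigma^{(k)}$, and the product rule filters $\sigma^{(k)}$ by successive quotients
\[ \nu^{(b-1)/2}\delta_\rho(a)^{(k_1)} \times \nu^{(b-3)/2}\delta_\rho(a)^{(k_2)} \times \cdots \times \nu^{-(b-1)/2}\delta_\rho(a)^{(k_b)}, \qquad \sum_j k_j = k. \]
By the base case each $k_j \in \{0, n(\rho), 2n(\rho),\ldots, n(\rho)a\}$, and whenever $k_j < n(\rho)a$ the corresponding factor contains $\nu^{(a-1)/2}\rho$ in its cuspidal support. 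For part (1), with $k < n(\rho)a$, some $k_j$ must be strictly less than $n(\rho)a$; picking the smallest index $j$ with that property, the twist $\nu^{(b+1-2j)/2}$ places $\nu^{(a+b-2j)/2}\rho$ in the cuspidal support of that factor, and by choosing $j=1$ the maximal cuspidal element $\nu^{(a+b-2)/2}\rho$ appears. One then checks that this element is preserved under passage to the quotient $u_\rho(a,b)^{(k)}$, because it sits at the top of $\csupp(u_\rho(a,b))$ and cannot be canceled against other composition factors.

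For part (2), with $k = n(\rho)a$, the strategy is to identify the unique composition factor in the product-rule filtration of $\sigma^{(k)}$ that surjects non-trivially onto $u_\rho(a,b)^{(k)}$. The expected culprit is $(k_1,\ldots,k_b) = (n(\rho)a, 0,\ldots,0)$: using the base case this factor equals
\[ \nu^{(b-1)/2}\!\cdot\!\nu^{-1/2} \times \nu^{(b-3)/2}\delta_\rho(a) \times \cdots \times \nu^{-(b-1)/2}\delta_\rho(a), \]
and the unique irreducible quotient of the latter (after the character absorption) is $\nu^{-1/2}u_\rho(a,b-1)$ by the very definition of Speh representations applied to the parameter $(a,b-1)$. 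To rule out other tuples $(k_1,\ldots,k_b)$ contributing, one tracks cuspidal supports: any alternative tuple either forces a surviving $\nu^{(a+b-2)/2}\rho$ (which does not appear in $u_\rho(a,b-1)$) or produces a representation whose cuspidal support is incompatible with any quotient of $u_\rho(a,b)^{(k)}$.

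The main obstacle I anticipate is the bookkeeping in part (2): showing cleanly that only the single leftmost-concentrated tuple contributes and that the quotient map realizes exactly $\nu^{-1/2}u_\rho(a,b-1)$ rather than some other irreducible subquotient. This is where appealing directly to \cite{lm14} (via the Zelevinsky-Langlands dictionary noted in the paper) is the most efficient path; alternatively one can use Tadi\'c's description of the Aubert-Zelevinsky dual (Theorem \ref{Aubert-Zelevinsky Dual of a Speh Representation}) to transfer the question to one about highest Jacquet modules of $u_\rho(b,a)$ along a maximal parabolic, where the answer is determined by the Zelevinsky multisegment and a direct inspection.
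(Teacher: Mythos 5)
Your proposal takes a genuinely different route from the paper: the paper offers no independent argument for this lemma, but deduces it from the Lapid--M\'inguez description of derivatives of ladder representations (\cite[Theorem 14]{lm14}, read through the Zelevinsky--Langlands dictionary), whereas you attempt a self-contained induction on $b$ using the standard module and the product rule. For part (1) your argument is repairable but not correct as written: you cannot ``choose $j=1$'' among the tuples $(k_1,\dots,k_b)$, and the assertion that $\nu^{(a+b-2)/2}\rho$ ``cannot be canceled against other composition factors'' when passing to the quotient is not a proof. The correct observation is simpler: if $k<n(\rho)a$ then in \emph{every} tuple one has $k_1<n(\rho)a$, so (using that derivatives of $Q(\Delta)$ truncate the segment from the bottom) every layer of $\sigma^{(k)}$ contains $\nu^{(a+b-2)/2}\rho$ in its cuspidal support; since $u_\rho(a,b)^{(k)}$ is a quotient of $\sigma^{(k)}$, every one of its constituents does too, and no ``preservation under quotient'' step is needed.

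The genuine gap is in part (2) and in the level assertion $\lev(\pi)=n(\rho)a$. Exactness of the derivative functors only exhibits $u_\rho(a,b)^{(k)}$ as a quotient of $\sigma^{(k)}$; it shows neither that this quotient is nonzero at $k=n(\rho)a$ nor that it vanishes for $k>n(\rho)a$ (for such $k$ the module $\sigma^{(k)}$ is far from zero, so vanishing of the quotient requires an argument you do not give). Moreover, your exclusion of the other tuples by cuspidal support is circular: to discard layers such as the one attached to $(0,\dots,0,n(\rho)a)$, whose cuspidal support contains $\nu^{(a+b-2)/2}\rho$, as possible constituents of $\pi^{(n(\rho)a)}$, you would already need to know that this element does not occur in $\csupp(\pi^{(n(\rho)a)})$ --- which is essentially the content of the statement being proved. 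Even granting that the layer for $(n(\rho)a,0,\dots,0)$ is the only ``compatible'' one, the conclusion would only be that the constituents of $\pi^{(n(\rho)a)}$ lie among those of $\nu^{(b-3)/2}\delta_\rho(a)\times\cdots\times\nu^{-(b-1)/2}\delta_\rho(a)$; it would not identify $\pi^{(n(\rho)a)}$ with the single irreducible representation $\nu^{-1/2}u_\rho(a,b-1)$, nor show it is nonzero. Closing this requires real extra input: either Zelevinsky's highest-derivative theorem applied to the Zelevinsky parameters of $u_\rho(a,b)$ (the ladder with $a$ segments of length $b$ --- an identification which is itself not available from the Langlands-quotient definition you start from), or, as the paper does, the full Lapid--M\'inguez computation, which simultaneously gives part (1), the level, and the exact (irreducible) highest derivative. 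The suggested fallback via Tadi\'c's duality (Theorem \ref{Aubert-Zelevinsky Dual of a Speh Representation}) does not obviously help, since the Aubert--Zelevinsky involution does not intertwine Bernstein--Zelevinsky derivatives with Jacquet modules in the simple way you indicate.
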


\subsubsection{Aubert-Zelevinsky Dual of a Speh Representation}

We shall need the following calculation of the Aubert-Zelevinsky dual of a Speh representation. The following result is due to Tadic (see \cite[Theorem B]{ta86}).
\begin{theorem} \label{Aubert-Zelevinsky Dual of a Speh Representation}
Let $a,b\in \mathbb{Z}_{\geq 0}$ and $\rho\in \Irr^c$. Let $\pi=u_{\rho}(a,b)$ be a Speh representation of $\G_n$. Then, \[ D(u_{\rho}(a,b)) = u_{\rho}(b,a). \]   
\end{theorem}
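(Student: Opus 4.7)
The plan is to compute $D(u_\rho(a,b))$ by realizing $u_\rho(a,b)$ as $Z(\mathfrak{m})$ for an explicit rectangular multisegment, and then invoking the defining property of $D$ on this configuration.

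Directly from the definition, $u_\rho(b,a)$ is the unique irreducible quotient of the product $\nu^{(a-1)/2}\delta_\rho(b) \times \cdots \times \nu^{-(a-1)/2}\delta_\rho(b)$. Using the identity $\nu^k \delta_\rho(b) = Q([k-(b-1)/2, k+(b-1)/2]_\rho)$, this realizes $u_\rho(b,a) = Q(\mathfrak{m})$ for the multisegment
\[ \mathfrak{m} = \{\Delta_j : 1 \leq j \leq a\}, \qquad \Delta_j = \left[ (j-1) - \tfrac{a-1}{2} - \tfrac{b-1}{2},\; (j-1) - \tfrac{a-1}{2} + \tfrac{b-1}{2} \right]_\rho, \]
consisting of $a$ segments each of length $b$, arranged as the columns of an $a \times b$ rectangle of exponents. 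The key claim is then that $u_\rho(a,b) = Z(\mathfrak{m})$ for this very same multisegment.

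I would prove the key claim by induction on $\min(a,b)$. The base cases $a = 1$ and $b = 1$ follow from the submodule-quotient swap between the decreasing and increasing parabolic inductions of a single segment, which one checks directly against the definitions of $u_\rho(1,b)$ and $u_\rho(a,1)$. For the inductive step I would use Lemma \ref{derivative of speh representations}, which gives $u_\rho(a,b)^{-} = u_\rho(a,b-1)$, and compute in parallel the highest derivative of $Z(\mathfrak{m})$ via the product rule for derivatives together with the geometric lemma applied to $Z(\Delta_1) \times \cdots \times Z(\Delta_a)$: a careful bookkeeping identifies this highest derivative with $\nu^{-1/2} Z(\mathfrak{m}')$, where $\mathfrak{m}'$ is the rectangle obtained by deleting the rightmost column of $\mathfrak{m}$, and by induction $Z(\mathfrak{m}') = u_\rho(a,b-1)$. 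A standard uniqueness argument using cuspidal support and the Langlands structure then pins down $u_\rho(a,b) = Z(\mathfrak{m})$.

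Once both identifications $u_\rho(a,b) = Z(\mathfrak{m})$ and $u_\rho(b,a) = Q(\mathfrak{m})$ are in hand, it remains to verify that the Aubert-Zelevinsky involution $D$ satisfies $D(Z(\mathfrak{m})) = Q(\mathfrak{m})$ for our specific rectangular $\mathfrak{m}$. This is the main obstacle: in general $D(Z(\mathfrak{m})) = Z(\mathfrak{m}^D)$ for a nontrivial dual multisegment $\mathfrak{m}^D$ produced by the Moeglin-Waldspurger algorithm, and the naive identity $D(Z(\mathfrak{m})) = Q(\mathfrak{m})$ fails for generic $\mathfrak{m}$. For the rectangular Speh configuration, however, the algorithm transposes the rectangle, producing $b$ segments of length $a$, which is exactly the Zelevinsky data of $u_\rho(b,a) = Q(\mathfrak{m})$, so that $D(Z(\mathfrak{m})) = Z(\mathfrak{m}^D) = u_\rho(b,a)$. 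Carrying out this rectangular rigidity check, either by a direct run of the Moeglin-Waldspurger algorithm on rectangles or by extending the inductive derivative argument to $D(u_\rho(a,b))$ itself, constitutes the technical heart of the proof.
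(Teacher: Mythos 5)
Your outline is essentially sound, but it is a genuinely different route from the paper: the paper gives no argument at all for this statement, simply citing Tadi\'c \cite{ta86} (Theorem B), whereas you propose to rederive it from the combinatorics of multisegments. Your two identifications are correct: $u_\rho(b,a)=Q(\mathfrak{m})$ holds by definition for the rectangular multisegment $\mathfrak{m}$ of $a$ segments of length $b$, and $u_\rho(a,b)=Z(\mathfrak{m})$ is the standard Zelevinsky-side realization of a Speh representation, which can indeed be proved by the derivative induction you sketch (the cleanest bookkeeping is to quote Zelevinsky's computation that the highest derivative of $Z(\mathfrak{n})$ is $Z(\mathfrak{n}^-)$, \cite{ze80}, rather than redoing it via the product rule and the geometric lemma; the level then forces the number of segments and the cuspidal support pins down the rectangle, as you indicate). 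One correction, though, to your final step: the identity $D(Z(\mathfrak{m}))=Q(\mathfrak{m})$ does \emph{not} fail for general $\mathfrak{m}$ --- it holds for every multisegment, being precisely the statement (equivalent to the M{\oe}glin--Waldspurger theorem) that the involution exchanges the Zelevinsky and Langlands parametrizations, i.e.\ $D(Z(\mathfrak{m}))=Z(\mathfrak{m}^{\#})=Q(\mathfrak{m})$. So once $u_\rho(a,b)=Z(\mathfrak{m})$ and $u_\rho(b,a)=Q(\mathfrak{m})$ are in hand, the theorem follows at once from that general exchange property; the ``rectangular rigidity check'' you describe (running the algorithm to see that rectangles transpose) is a valid alternative but not logically necessary. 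The trade-off between the two approaches is clear: the paper's citation of Tadi\'c is shortest and is the natural choice in context, while your argument is self-contained in spirit but its heavy inputs (the Zelevinsky realization of Speh representations, the M{\oe}glin--Waldspurger description of $D$) are themselves theorems one would normally cite, so in practice it amounts to reassembling known results rather than a more elementary proof.
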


The above result can be interpreted in the language of Arthur parameters as follows. Let $\pi=u_{\rho}(a,b)$ be a Speh representation of $\G_n$ that arises from an Arthur parameter of the form, \[ \psi \otimes \Sym^a(\mathbb{C}^2) \otimes \Sym^b(\mathbb{C}^2). \] Then the Aubert-Zelevinsky dual of $u_{\rho}(a,b)$ is once again a Speh representation whose corresponding Arthur parameter is given as, \[ \psi \otimes \Sym^b(\mathbb{C}^2) \otimes \Sym^a(\mathbb{C}^2). \]

\subsubsection{Definition of Speh Representations of Segment Type} \label{subsection on definition of speh representations of segment type} We introduce some terminology for the sake of convenience. Suppose that we are given a Speh representation $u_{\rho}(a,b)$ of $\G_n$. If $a=1$ then the Speh representation $u_{\rho}(a,b)$ is of the form $Z(\Delta)$ for the segment $\Delta=[-\frac{(b-1)}{2},\frac{(b-1)}{2}]_\rho$. If $b=1$ then the Speh representation $u_{\rho}(a,b)$ is of the form $Q(\Delta)$ for the segment $\Delta=[-\frac{(a-1)}{2},\frac{(a-1)}{2}]_\rho$. 

If a Speh representation is of the form $Q(\Delta)$ or $Z(\Delta)$ for some segment $\Delta$ then we shall say that the Speh representation is of \textit{segment type}. If $\pi\in \Irr(\G_n)$ is of segment type then its Arthur parameter $A(\pi)$ is such that either the Deligne $\SL_2$ factor of $A(\pi)$ or the Arthur $\SL_2$ factor of $A(\pi)$ acts trivially.

We shall require the following computations of Jacquet modules (see \cite[Proposition 3.4]{ze80} and \cite[Proposition 9.5]{ze80}).

\begin{lemma} \label{lemma about jacquet modules of representations of segment type}
Let $\rho$ be a cuspidal representation of $\G_m$ and consider the segment $\Delta=[\rho,\rho^{k-1}\rho]$ where, $n=km$. If $l$ is not divisible by $m$ then $r_{(n-l,l)}(Q(\Delta))= \ropp_{(n-l,l)}(Q(\Delta))= r_{(n-l,l)}(Z(\Delta))= \ropp_{(n-l,l)}(Z(\Delta))= 0$. If $l=mp$ then, \[ r_{(n-l,l)}(Q(\Delta))= Q([\nu^p\rho,\nu^{k-1}\rho]) \otimes Q([\rho,\nu^{p-1}\rho]). \]  \[ \bar{r}_{(n-l,l)}(Q(\Delta))= Q([\rho,\nu^{k-p-1}\rho]) \otimes Q([\nu^{k-p}\rho,\nu^{k-1}\rho]). \]  \[ r_{(n-l,l)}(Z(\Delta))= Z([\rho,\nu^{k-p-1}\rho]) \otimes Z([\nu^{k-p}\rho,\nu^{k-1}\rho]). \] \[ \ropp_{(n-l,l)}(Z(\Delta))= Z([\nu^p\rho,\nu^{k-1}\rho]) \otimes Z([\rho,\nu^{p-1}\rho]). \]
\end{lemma}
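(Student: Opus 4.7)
The plan is to prove the statements for $Z(\Delta)$ first and then deduce those for $Q(\Delta)$ either by an entirely parallel argument (using that $Q(\Delta)$ is the unique irreducible quotient, rather than submodule, of the principal series $\rho \times \nu\rho \times \cdots \times \nu^{k-1}\rho$), or by invoking the Aubert--Zelevinsky involution together with its known interaction with Jacquet functors on the Grothendieck group. Since $\rho$ is a cuspidal representation of $\G_m$ and $\csupp(Z(\Delta)) = \csupp(Q(\Delta)) = \{\rho, \nu\rho, \ldots, \nu^{k-1}\rho\}$, any irreducible subquotient of $r_{(n-l,l)}(Z(\Delta))$ or $\ropp_{(n-l,l)}(Z(\Delta))$ is a tensor $\pi_1 \otimes \pi_2$ whose cuspidal support is a disjoint partition of this multiset. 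Since each piece of cuspidal support lives on $\G_m$, the subrepresentation $\pi_2$ of $\G_l$ can exist only when $m \mid l$; this settles the vanishing assertion.

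Now suppose $l = mp$. For $r_{(n-l,l)}(Z(\Delta))$, I would use Zelevinsky's coproduct formula in the Hopf algebra $\R$: for a single segment $\Delta = [\rho, \nu^{k-1}\rho]$, one has
\[ \partial\bigl(Z(\Delta)\bigr) = \sum_{j=0}^{k} Z\bigl([\rho, \nu^{j-1}\rho]\bigr) \otimes Z\bigl([\nu^{j}\rho, \nu^{k-1}\rho]\bigr), \]
with the convention that $Z$ of an empty segment is the trivial representation of $\G_0$. Extracting the summand of bidegree $((k-p)m, pm)$ gives exactly $Z([\rho,\nu^{k-p-1}\rho]) \otimes Z([\nu^{k-p}\rho,\nu^{k-1}\rho])$, which proves the formula on the level of the Grothendieck group. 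The identification on the nose (as a module, not merely as a class) uses Frobenius reciprocity applied to the embedding $Z(\Delta) \hookrightarrow Z([\rho,\nu^{k-p-1}\rho]) \times Z([\nu^{k-p}\rho,\nu^{k-1}\rho])$, which is available because these two segments are not linked in the wrong direction.

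For the opposite Jacquet module, I would apply the identity $\ropp_{(n-l,l)}(\pi) \simeq (r_{(n-l,l)}(\pi^{\vee}))^{\vee}$ from second adjointness. Since $Z(\Delta)^{\vee} \simeq Z(\Delta^{\vee})$ with $\Delta^{\vee} = [\nu^{-(k-1)}\rho^{\vee}, \rho^{\vee}]$, applying the previous computation to $\Delta^{\vee}$ and then dualizing again swaps the roles of the two tensor factors and replaces $p$ with $k-p$, yielding $Z([\nu^{p}\rho, \nu^{k-1}\rho]) \otimes Z([\rho, \nu^{p-1}\rho])$ as claimed. For the $Q(\Delta)$ version, one replaces the embedding $Z(\Delta) \hookrightarrow Z(\Delta_1) \times Z(\Delta_2)$ by the surjection $Q(\Delta_1) \times Q(\Delta_2) \twoheadrightarrow Q(\Delta)$ (using that the segments are listed in an order which makes $Q(\Delta)$ the Langlands quotient) and runs the dual argument.

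The main obstacle is upgrading the Frobenius reciprocity map from a mere non-zero map into an isomorphism; this is where Zelevinsky's explicit coproduct formula (and the fact that $\partial$ is multiplicative in $\R$) is essential, and is what actually requires the segment hypothesis — for a general multisegment the analogous Jacquet module would involve a sum over many terms parameterized by pairs of "sub-multisegments", and all of them would survive.
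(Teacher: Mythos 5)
Your argument is correct and is in substance the same as the paper's treatment: the paper offers no independent proof of this lemma but simply cites Zelevinsky's Propositions 3.4 and 9.5, which are exactly the single-segment coproduct/Jacquet-module computations you invoke, with the $\ropp$-statements and the $Q(\Delta)$-statements obtained by the same routine dualizations you describe. One minor correction that does not affect the outcome: the segments $[\rho,\nu^{k-p-1}\rho]$ and $[\nu^{k-p}\rho,\nu^{k-1}\rho]$ \emph{are} linked (the first precedes the second); the embedding $Z(\Delta)\hookrightarrow Z([\rho,\nu^{k-p-1}\rho])\times Z([\nu^{k-p}\rho,\nu^{k-1}\rho])$ instead follows because the right-hand side embeds into the full principal series $\rho\times\nu\rho\times\cdots\times\nu^{k-1}\rho$, whose socle is $Z(\Delta)$ --- and in any case the identification on the nose is automatic from the Grothendieck-group identity, since a finite-length module whose class is a single irreducible is itself irreducible.
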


\begin{remark} \label{remark on jacquet modules of representations of segment type}
Suppose $Q(\Delta)$ is a unitary representation of $\G_n$. Suppose $r_{(n-l,l)}(Q(\Delta)) = \omega_1 \otimes \omega_2$ where, $r_{(n-l,l)}(Q(\Delta))$ is the Jacquet module of $Q(\Delta)$ with respect to a proper parabolic of $\G_n$ corresponding to the partition $(n-l,l)$ of $n$. Then by the first computation in the above lemma it is evident that $e(\omega_1)>0$ whereas, $e(\omega_2)<0$. We recall that given an irreducible representation $\pi$, $e(\pi)$ denotes the exponent of the central character of $\pi$. We can make analogous assertions for the remaining three cases of the above lemma.
\end{remark}

\section{A Coarse Variant of the Bernstein-Zelevinsky Filtration} \label{section four}

In this Section, we recall the filtration for parabolically induced representations introduced by K.Y. Chan in the work \cite{ch22}. It will be the main tool in our proofs. We also state two lemmas from \cite{ch22} that follow by an application of this filtration.

\subsection{Filtration for Parabolically Induced Modules} 

Since the following filtration on parabolically induced modules is going to play a key role in proving our results, we recall the main idea behind the proof for the sake of completeness.

\begin{theorem} \cite[Proposition 5.13]{ch22} \label{filtration for parabolically induced modules}
Let $\pi_1$ and $\pi_2$ be smooth representations of $\G_{n_1}$ and $\G_{n_2}$ respectively. Then $(\pi_1\times\pi_2)|_{\G_{n_1+n_2-1}}$ has a filtration,
\[0=V_d\subsetneq V_{d-1}\subsetneq \cdots \subsetneq V_0 = (\pi_1\times\pi_2)|_{\G_{n_1+n_2-1}}\]
such that, $$ V_0/V_1 \cong \nu^{1/2}\pi_1\times\pi_2|_{\G_{n_2-1}}, $$ and for $1\leq k\leq d-1$, 
$$V_k/V_{k+1} \cong \nu^{1/2}\pi_1^{(k)}\times (\Pi_k \btimes \pi_2)|_{\G_{n_2 + k -1}}.$$

Here, $\Pi_k$ denotes the representation of the mirabolic group $\M_k$ as defined in Section \ref{subsection on derivatives} and $\btimes$ denotes the mirabolic induction as defined in Section \ref{subsection on restriction of a principal series to the mirabolic subgroup}.
 
\end{theorem}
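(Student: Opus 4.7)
The plan is to construct the filtration by successively refining the two-term restriction-to-mirabolic decomposition of Lemma \ref{lemma about restriction to the mirabolic subgroup}, and then further filtering the sub-piece via the classical Bernstein-Zelevinsky filtration on $\M_{n_1}$.

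First, I would restrict $\pi_1\times\pi_2$ to $\M_{n_1+n_2}$ and apply Lemma \ref{lemma about restriction to the mirabolic subgroup} to obtain the short exact sequence
$$0\to \pi_1|_{\M_{n_1}}\btimes \pi_2 \to (\pi_1\times \pi_2)|_{\M_{n_1+n_2}} \to \pi_1\btimes \pi_2|_{\M_{n_2}} \to 0.$$
Since further restriction from $\M_{n_1+n_2}$ to $\G_{n_1+n_2-1}\subset \M_{n_1+n_2}$ is just a forgetful functor, it preserves this exactness. Setting $V_1$ to be the image of $(\pi_1|_{\M_{n_1}}\btimes \pi_2)|_{\G_{n_1+n_2-1}}$ inside $V_0=(\pi_1\times \pi_2)|_{\G_{n_1+n_2-1}}$ already yields the identification $V_0/V_1\cong (\pi_1\btimes \pi_2|_{\M_{n_2}})|_{\G_{n_1+n_2-1}}$.

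Second, I would identify this quotient with the claimed parabolic induction $\nu^{1/2}\pi_1\times \pi_2|_{\G_{n_2-1}}$. Unwinding the definition of $\btimes$ from Section \ref{subsection on restriction of a principal series to the mirabolic subgroup}, the module $\pi_1\btimes \pi_2|_{\M_{n_2}}$ is induced from the subgroup $H^{\prime}\subset \M_{n_1+n_2}$, and a direct check shows that $H^{\prime}\cap \G_{n_1+n_2-1}$ is precisely the standard parabolic $P_{n_2-1,n_1}$ of $\G_{n_1+n_2-1}$. A Mackey-theoretic analysis of $H^{\prime}\backslash \M_{n_1+n_2}/\G_{n_1+n_2-1}$, combined with the twist $\tilde{\rho}=\nu^{-1/2}\rho$ built into the definition of $\btimes$ and the modulus characters coming from normalized induction, produces exactly the required factor $\nu^{1/2}\pi_1\times \pi_2|_{\G_{n_2-1}}$.

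Third, I would filter $V_1$ further using the classical Bernstein-Zelevinsky filtration of $\pi_1|_{\M_{n_1}}$ viewed as a representation of $\M_{n_1}$. Its successive quotients are built from the derivatives $\pi_1^{(k)}$ tensored (via $\btimes$-type induction on the mirabolic side) with the Gelfand-Graev piece $\Pi_k$. Applying the exact functor $(-)\btimes \pi_2$ and restricting to $\G_{n_1+n_2-1}$, an analogous Mackey computation, combined with transitivity/associativity of $\btimes$, identifies the $k$-th successive quotient with $\nu^{1/2}\pi_1^{(k)}\times (\Pi_k\btimes \pi_2)|_{\G_{n_2+k-1}}$, establishing the claimed structure for $1\leq k\leq d-1$.

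The main technical obstacle is the Mackey-orbit analysis underpinning the second and third steps: one must carefully describe the double cosets of $\M_{n_1+n_2}$ modulo $H^{\prime}$ (and its analogue for each $k$) on one side and $\G_{n_1+n_2-1}$ on the other, and faithfully propagate the modulus-character contributions through the chain of inductions to verify that the normalized induction produces exactly the $\nu^{1/2}$-twist in the stated formulas. Once this bookkeeping is correctly executed, the iterative refinement of the two-term filtration assembles directly into the filtration of the theorem.
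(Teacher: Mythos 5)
Your outline takes essentially the same route as the paper's sketch (and as \cite[Proposition 5.13]{ch22}, to which the paper defers): restrict to the mirabolic and apply Lemma \ref{lemma about restriction to the mirabolic subgroup}, identify the quotient piece with $\nu^{1/2}\pi_1\times\pi_2|_{\G_{n_2-1}}$, and refine the submodule $\pi_1|_{\M_{n_1}}\btimes\pi_2$ by the classical Bernstein--Zelevinsky filtration of $\pi_1|_{\M_{n_1}}$ to produce the derivative terms; your assignment of the quotient piece to $V_0/V_1$ and of the submodule to $V_1$ is also the consistent one. One identification in your second step is misstated, though the fix is routine: the quotient piece $\pi_1\btimes\pi_2|_{\M_{n_2}}$ is of the form $\rho\btimes\tau$ and is therefore induced from the subgroup $H$ of Section \ref{subsection on restriction of a principal series to the mirabolic subgroup}, not from $H'$; the subgroup $H'$ (and the twist $\Tilde{\rho}=\nu^{-1/2}\rho$ built into that definition) belongs to the other induction $\tau\btimes\rho$, i.e.\ to the submodule piece $\pi_1|_{\M_{n_1}}\btimes\pi_2$. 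Since $\V_{n_1+n_2}\subset H$, there is a single coset in the Mackey analysis, and $H\cap\G_{n_1+n_2-1}$ is the standard parabolic $P_{n_1,n_2-1}$ with Levi $\G_{n_1}\times\G_{n_2-1}$ (not $P_{n_2-1,n_1}$, which would produce the factors in the wrong order); the $\nu^{1/2}$ on $\pi_1$ then comes from the modulus bookkeeping of the induction, not from the $\Tilde{\rho}$-twist. With these corrections your Mackey computation yields $V_0/V_1\cong\nu^{1/2}\pi_1\times\pi_2|_{\G_{n_2-1}}$, and your third step (exactness of $(-)\btimes\pi_2$ applied to the Bernstein--Zelevinsky filtration of $\pi_1|_{\M_{n_1}}$, then restriction to $\G_{n_1+n_2-1}$) is exactly how the paper obtains the remaining subquotients $\nu^{1/2}\pi_1^{(k)}\times(\Pi_k\btimes\pi_2)|_{\G_{n_2+k-1}}$.
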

\begin{proof}
This is shown in \cite[Proposition 5.13]{ch22} and we recall the main idea of the proof for the sake of completeness. By Lemma \ref{lemma about restriction to the mirabolic subgroup}, we obtain the following exact sequence, \[ 0\rightarrow \pi_1|_{\M_{n_1}}\btimes\pi_2 \rightarrow (\pi_1\times\pi_2)|_{\M_{n_1+n_2}} \rightarrow \pi_1\btimes\pi_2|_{\M_{n_2}} \rightarrow 0. \] On restricting $\pi_1|_{\M_{n_1}}\btimes\pi_2$ to $\G_{n_1+n_2-1}$ we get $\nu^{1/2}\pi_1\times\pi_2|_{\G_{n_2-1}}$ which gives us the first term in our filtration. 

For the submodule $\pi_1|_{\M_{n_1}}\btimes\pi_2$ we apply the Bernstein-Zelevinky filtration to decompose $\pi_1|_{\M_{n_1}}$ and then restrict to $\G_{n_1+n_2-1}$ in order to obtain the rest of the terms of our theorem.  
\end{proof}

\begin{remark}
As pointed out earlier the above filtration can be thought of as a coarse version of the Bernstein-Zelevinsky filtration where we club together some of the pieces in the Bernstein-Zelevinsky filtration. As we shall see in Lemma \ref{lemma for the key reduction step}, an advantage of having this coarser filtration is that we can establish $\Ext$ vanishing for all but one of the pieces of this filtration making our situation easier to analyse. 
\end{remark}

\subsection{Transfer Lemma}

The following lemma shall allow us to transfer a branching problem for the pair $(\G_n, \G_{n-1})$ to a branching problem for the pair $(\G_{n-1}, \G_{n-2})$ and vice-versa. 

\begin{lemma}\cite[Proposition 4.1]{ch22} \label{transfer lemma}
Let $\pi_1$ and $\pi_2$ be irreducible representations of $\G_n$ and $\G_{n-1}$ respectively. Then for any cuspidal representation $\sigma$ of $\G_2$ such that $\sigma \not \in \csuppline(\nu^{-1/2}\pi_1^{\vee})\cup \csuppline(\pi_2)$ we have that, \[ \Ext^i_{\G_{n-1}}(\pi_1,\pi_2^{\vee}) = \Ext^i_{\G_{n}}(\pi_2 \times \sigma,\pi_1^{\vee}) \] for all integers $i\geq 0$.  
\end{lemma}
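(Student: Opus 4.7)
The plan is to compute the right-hand side $\Ext^i_{\G_n}(\pi_2 \times \sigma, \pi_1^{\vee})$ by applying the coarse Bernstein-Zelevinsky filtration of Theorem \ref{filtration for parabolically induced modules} to $(\pi_2 \times \sigma)|_{\G_n}$, taking $n_1 = n-1$ (for $\pi_2$) and $n_2 = 2$ (for $\sigma$). This produces a filtration $0 = V_d \subsetneq \cdots \subsetneq V_0 = (\pi_2 \times \sigma)|_{\G_n}$ with top piece $V_0/V_1 \cong \nu^{1/2}\pi_2 \times \sigma|_{\G_1}$ and lower pieces $V_k/V_{k+1} \cong \nu^{1/2}\pi_2^{(k)} \times (\Pi_k \btimes \sigma)|_{\G_{k+1}}$ for $1 \leq k \leq d-1$. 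The long exact sequences associated to this filtration split the computation of $\Ext^i_{\G_n}(\pi_2 \times \sigma, \pi_1^{\vee})$ across the graded pieces.

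First I would show that $\Ext^*_{\G_n}(V_k/V_{k+1}, \pi_1^{\vee}) = 0$ for each $k \geq 1$. Unpacking the mirabolic induction $\Pi_k \btimes \sigma$, one sees that $\sigma$ appears as a building block, so the cuspidal support of $(\Pi_k \btimes \sigma)|_{\G_{k+1}}$ meets $\csuppline(\sigma)$. The hypothesis $\sigma \notin \csuppline(\nu^{-1/2}\pi_1^{\vee})$ states precisely that this line is disjoint from the cuspidal line of $\pi_1^{\vee}$ (after accounting for the $\nu^{1/2}$-shift built into the filtration). Decomposing $\nu^{1/2}\pi_2^{(k)}$ into its irreducible composition factors and invoking Lemma \ref{lemma about standard argument on ext vanishing}, with each composition factor playing the role of the irreducible $\pi_1$ there and $(\Pi_k \btimes \sigma)|_{\G_{k+1}}$ as the smooth second factor, yields the required vanishing.

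It remains to handle the top piece $V_0/V_1 = \nu^{1/2}\pi_2 \times \sigma|_{\G_1}$. Because $\sigma$ is cuspidal on $\G_2$, its Bernstein-Zelevinsky derivatives satisfy $\sigma^{(1)} = 0$ and $\sigma^{(2)} \cong \mathbb{C}$, and the corresponding BZ filtration of $\sigma|_{\G_1}$ collapses to show $\sigma|_{\G_1} \cong \nu^{1/2}\cdot \cind_{\{1\}}^{F^{\times}}\mathbb{C}$, a projective object in the category of smooth $\G_1$-representations. Applying second adjointness and then the Kunneth formula (Theorem \ref{kunneth formula}), and using projectivity of $\sigma|_{\G_1}$ to collapse the $\G_1$-factor of Ext onto $\Hom$, one obtains
\[ \Ext^i_{\G_n}(V_0/V_1,\pi_1^{\vee}) \;\cong\; \bigoplus_j \Ext^i_{\G_{n-1}}(\nu^{1/2}\pi_2,\tau_j), \]
where $\tau_j \otimes \chi_j$ ranges over the composition factors of $\ropp_{(n-1,1)}(\pi_1^{\vee})$. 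The second hypothesis $\sigma \notin \csuppline(\pi_2)$ prunes this sum by killing all factors whose $\tau_j$ has cuspidal line incompatible with $\pi_2$; comparing the surviving terms with $\pi_1|_{\G_{n-1}}$ via Frobenius applied (in the opposite direction) to the Jacquet module $r_{(n-1,1)}(\pi_1)$ then identifies the total with $\Ext^i_{\G_{n-1}}(\pi_1, \pi_2^{\vee})$.

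The principal difficulty will be this last identification. Matching the pruned sum $\bigoplus_j \Ext^i_{\G_{n-1}}(\nu^{1/2}\pi_2,\tau_j)$ with $\Ext^i_{\G_{n-1}}(\pi_1,\pi_2^{\vee})$ requires carefully tracking which composition factors of $\ropp_{(n-1,1)}(\pi_1^{\vee})$ survive under the cuspidal-line restrictions, and verifying that the sum reassembles through Frobenius for the inclusion $\G_{n-1} \subset \G_n$. This combinatorial bookkeeping—rather than any single conceptual step—is where the main technical effort will lie.
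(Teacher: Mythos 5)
You have a genuine gap, and it sits at the heart of the argument: the claimed vanishing of the lower pieces $V_k/V_{k+1}=\nu^{1/2}\pi_2^{(k)}\times(\Pi_k\btimes\sigma)|_{\G_{k+1}}$ is false. It is not true that every irreducible subquotient of $(\Pi_k\btimes\sigma)|_{\G_{k+1}}$ has cuspidal support meeting $\csuppline(\sigma)$: since $\sigma$ is cuspidal on $\G_2$ we have $\sigma^{(1)}=0$ and $\sigma^{(2)}=\mathbbm{1}_{\G_0}$, so by the Leibniz rule (the only nonzero derivative of $\Pi_k$ being its top one) the restriction $(\Pi_k\btimes\sigma)|_{\G_{k+1}}$ is glued from a twist of $\sigma\times\cind_{U_{k-1}}^{\G_{k-1}}\psi_{k-1}$ \emph{and} the full Gelfand--Graev representation $\cind_{U_{k+1}}^{\G_{k+1}}\psi_{k+1}$, whose composition factors have arbitrary cuspidal support. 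Moreover, Lemma \ref{lemma about standard argument on ext vanishing} can only exploit a cuspidal mismatch located in the finite-length factor, whereas you place the mismatch inside the infinite-length factor $(\Pi_k\btimes\sigma)|_{\G_{k+1}}$, which that lemma cannot see. These discarded pieces are exactly where the answer lives. Concretely, take $n=2$, $\pi_2=\chi$ a character of $\G_1$, $\pi_1$ a cuspidal (hence generic) representation of $\G_2$, and $\sigma$ as in the lemma. The right-hand side is $\Hom_{\G_2}((\chi\times\sigma)|_{\G_2},\pi_1^{\vee})=\mathbb{C}$ by the generic branching law, but your top piece $\nu^{1/2}\chi\times\sigma|_{\G_1}$ has every composition factor containing the $\G_1$-character $\nu^{1/2}\chi$ in its cuspidal support (note $\sigma|_{\G_1}$ is the regular representation of $F^{\times}$ and carries no trace of $\sigma$), so it contributes $0$ against the $\G_2$-cuspidal $\pi_1^{\vee}$; if the $k=1$ piece also vanished, the total would be $0$, a contradiction. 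For the same reason your final ``identification'' of the top piece with $\Ext^i_{\G_{n-1}}(\pi_1,\pi_2^{\vee})$ cannot be carried out: in general position that piece dies by cuspidal support comparison while both sides of the lemma are nonzero.

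The correct organization is the opposite of yours (and this is essentially how the result is proved in the cited source; the present paper only quotes it). Since $\sigma\notin\csuppline(\pi_2)$, the product $\pi_2\times\sigma\cong\sigma\times\pi_2$ is irreducible, so one may apply Theorem \ref{filtration for parabolically induced modules} with $\sigma$ as the \emph{first} factor, i.e.\ take derivatives of $\sigma$, not of $\pi_2$. Then the $k=1$ piece vanishes because $\sigma^{(1)}=0$, the top piece $\nu^{1/2}\sigma\times\pi_2|_{\G_{n-2}}$ is killed by the hypothesis $\sigma\notin\csuppline(\nu^{-1/2}\pi_1^{\vee})$ together with Lemma \ref{lemma about standard argument on ext vanishing} (here the mismatch $\nu^{1/2}\sigma\notin\csupp(\pi_1^{\vee})$ does sit in the finite-length factor), and one is reduced to $\Ext^i_{\G_n}((\Pi_2\btimes\pi_2)|_{\G_n},\pi_1^{\vee})$. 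The genuinely substantive step, which your proposal never reaches, is to identify this last group with $\Ext^i_{\G_{n-1}}(\pi_1,\pi_2^{\vee})$ via the exactness and adjunction properties of the Bernstein--Zelevinsky functors; that is the content of Chan's Proposition 4.1 and is not a matter of cuspidal-support bookkeeping on the top piece.
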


\subsection{A Replacement Lemma}

We shall also require the following replacement lemma. It will be useful in the proof of Lemma \ref{lemma for the key reduction step} by allowing us to replace some factors in a restriction of an Arthur type representation by some cuspidal representation. 

\begin{lemma}\cite[Lemma 3.6]{ch22} \label{replacement lemma}
Let $\pi_1 \in \Alg(\G_k)$ and $\pi_2 \in \Alg(\G_l)$. Let $\pi_3 \in \Alg(\G_n)$ with $n\geq l+k$. Let $a=(n+1)-(k+l)$. Then, for any $\sigma$ in $\Irr^c(\G_a)$ such that $\sigma \not \in \csuppline(\nu^{-1/2}\pi_3)$, and for any $i$,
\[ \Ext^i_{\G_{n}}(\pi_1\times (\sigma\times \pi_2)|_{\G_{n-k}},\pi_3) = \Ext^i_{\G_{n}}(\pi_1\times (\Pi_a \btimes \pi_2)|_{\G_{n-k}},\pi_3) \]   
\end{lemma}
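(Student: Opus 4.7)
The plan is to apply the coarse filtration of Theorem~\ref{filtration for parabolically induced modules} to $(\sigma\times\pi_2)|_{\G_{n-k}}$, noting that $\sigma\times\pi_2\in\Alg(\G_{a+l})$ with $n-k=a+l-1$. Since $\sigma$ is a cuspidal representation of $\G_a$, its Bernstein-Zelevinsky derivatives $\sigma^{(j)}$ vanish for $1\le j\le a-1$, while $\sigma^{(a)}$ is the trivial representation of $\G_0$. Consequently only the layers at indices $k=0$ and $k=a$ of the filtration are non-zero, and the filtration collapses into the short exact sequence
\[ 0\to (\Pi_a\btimes\pi_2)|_{\G_{n-k}}\to (\sigma\times\pi_2)|_{\G_{n-k}}\to \nu^{1/2}\sigma\times\pi_2|_{\G_{l-1}}\to 0, \]
where the submodule comes from the layer $V_a/V_{a+1}$ (with the factor $\nu^{1/2}\sigma^{(a)}$ disappearing, since $\sigma^{(a)}$ lives on $\G_0$) and the quotient is the top layer $V_0/V_1$.

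Next, I parabolically induce this short exact sequence by $\pi_1$ from the left. Since parabolic induction is exact, this yields the short exact sequence of smooth $\G_n$-modules
\[ 0\to \pi_1\times (\Pi_a\btimes\pi_2)|_{\G_{n-k}}\to \pi_1\times (\sigma\times\pi_2)|_{\G_{n-k}}\to \pi_1\times\nu^{1/2}\sigma\times\pi_2|_{\G_{l-1}}\to 0. \]
Applying the long exact sequence of $\Ext^\bullet_{\G_n}(-,\pi_3)$ then reduces the desired isomorphism in every degree $i$ to the Ext-vanishing statement
\[ \Ext^i_{\G_n}\bigl(\pi_1\times\nu^{1/2}\sigma\times\pi_2|_{\G_{l-1}},\pi_3\bigr)=0 \qquad \text{for all } i\ge 0. \]

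To establish this vanishing, I invoke the cuspidal-support argument encapsulated in Lemma~\ref{lemma about standard argument on ext vanishing}. Unwinding the hypothesis $\sigma\notin\csuppline(\nu^{-1/2}\pi_3)$ gives $\nu^{m+1/2}\sigma\notin\csupp(\pi_3)$ for every $m\in\mathbb{Z}$; in particular the cuspidal factor $\nu^{1/2}\sigma$ of the first argument does not lie in the cuspidal support of $\pi_3$. Using second adjointness to rewrite the Ext as an Ext on the Levi $\G_k\times\G_a\times\G_{l-1}$ against the opposite Jacquet module of $\pi_3$, and comparing cuspidal supports on the middle $\G_a$-factor, yields the vanishing in all degrees. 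The main point where care is needed is the clean collapse of the filtration of Theorem~\ref{filtration for parabolically induced modules} together with the identification of the surviving layers; once the short exact sequence is in hand, the cuspidal-support Ext-vanishing step is standard.
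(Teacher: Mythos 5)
The paper does not actually prove this statement: it is quoted verbatim from Chan's work (\cite[Lemma 3.6]{ch22}), so there is no internal proof to compare against. Your argument is the natural one and, as far as I can tell, it is essentially the argument underlying the cited result: for cuspidal $\sigma$ the only non-vanishing Bernstein--Zelevinsky derivatives are $\sigma^{(0)}=\sigma$ and $\sigma^{(a)}=$ the trivial representation of $\G_0$, so the coarse filtration of Theorem \ref{filtration for parabolically induced modules} applied to $(\sigma\times\pi_2)|_{\G_{a+l-1}}$ collapses to the two layers $(\Pi_a\btimes\pi_2)|_{\G_{n-k}}$ and $\nu^{1/2}\sigma\times\pi_2|_{\G_{l-1}}$ (equivalently, one can use Lemma \ref{lemma about restriction to the mirabolic subgroup} together with the classical fact that $\sigma|_{\M_a}\cong\Pi_a$); exactness of parabolic induction and the long exact sequence then reduce everything to the vanishing of $\Ext^i_{\G_n}(\pi_1\times\nu^{1/2}\sigma\times\pi_2|_{\G_{l-1}},\pi_3)$, and your reading of the hypothesis ($\nu^{1/2}\sigma\notin\csupp(\pi_3)$) is the right one. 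The indexing of the surviving bottom layer as $V_a/V_{a+1}$ matches the paper's own (slightly loose) usage in the proof of Lemma \ref{lemma for the key reduction step}, so that is not an issue.

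The one place where your write-up is thinner than it should be is the final vanishing step. Lemma \ref{lemma about standard argument on ext vanishing} does not apply verbatim: there the cuspidal factor is the \emph{first} factor and, more importantly, the target is assumed \emph{irreducible}, whereas in the statement you are proving $\pi_3$ is an arbitrary smooth representation. Your remedy (second adjointness for the three-block parabolic $(k,a,l-1)$, then K\"unneth and comparison of cuspidal supports on the middle $\G_a$-factor) works cleanly when $\pi_3$ has finite length, since then $\ropp_{(k,a,l-1)}(\pi_3)$ admits a composition series with factors $\omega_1\otimes\omega_2\otimes\omega_3$ and $\Ext^*_{\G_a}(\nu^{1/2}\sigma,\omega_2)=0$ because $\csupp(\omega_2)\subseteq\csupp(\pi_3)$ avoids $\nu^{1/2}\sigma$; this covers every use of the lemma in the present paper, where $\pi_3$ is irreducible. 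For genuinely infinite-length $\pi_3$, however, the opposite Jacquet module need not have a composition series and the K\"unneth formula as cited (Theorem \ref{kunneth formula}) requires finite length on one side, so an extra d\'evissage (e.g.\ a Bernstein-component or Bernstein-center support argument on the $\G_a$-factor) is needed to justify the vanishing in the stated generality. You should either add that reduction or note explicitly that you prove the lemma in the finite-length case actually used.
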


\section{Some Lemmas on Extensions on the Same Group} \label{section five}

In this Section, we prove some lemmas about extensions between representations of Arthur type on the same group. These lemmas about extensions on the same group are needed because we shall ultimately reduce the $\Ext$ branching problem to a study of extensions between representations of the same group.

\subsection{Extensions between Speh Representations}
We have the following lemma about $\Ext$ modules of Speh representations on the same group.

\begin{lemma} \label{lemma about extensions between speh representations on same group}
Let $\pi_1$ and $\pi_2$ be two Speh representations of $\G_n$ such that, \[ \Ext_{\G_n}^i(\pi_1,\pi_2)\neq 0 \] for some integer $i\geq 0$. Then either $\pi_2=\pi_1$ or $\pi_2=D(\pi_1)$. 
\end{lemma}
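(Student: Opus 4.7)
The plan is to exploit the rigid combinatorial shape of the cuspidal support of a Speh representation, together with the fact that Ext between irreducibles on the same group forces their cuspidal supports to agree. Write $\pi_j=u_{\rho_j}(a_j,b_j)$ for some $\rho_j\in\Irr^{c,u}$ and positive integers $a_j,b_j$. The goal then reduces to showing $\rho_1=\rho_2$ and $\{a_1,b_1\}=\{a_2,b_2\}$; together with Theorem \ref{Aubert-Zelevinsky Dual of a Speh Representation} (giving $D(u_\rho(a,b))=u_\rho(b,a)$) this immediately yields $\pi_2\in\{\pi_1,D(\pi_1)\}$.

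The first step is to argue that nonvanishing of some $\Ext^i_{\G_n}(\pi_1,\pi_2)$ forces $\csupp(\pi_1)=\csupp(\pi_2)$. The Bernstein centre of $\G_n$ acts functorially on every Ext module, and on an irreducible representation it acts by a character pinned down by the cuspidal support. If $\csupp(\pi_1)\neq\csupp(\pi_2)$, I would pick a central element $z$ with $\pi_1(z)\neq \pi_2(z)$; the difference $\pi_1(z)-\pi_2(z)$ would act on $\Ext^i_{\G_n}(\pi_1,\pi_2)$ both as an invertible scalar and (since the two induced actions must coincide) as zero, forcing the Ext to vanish in every degree and contradicting the hypothesis.

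Next I would match the cuspidal lines. Every entry of $\csupp(u_\rho(a,b))$ is of the form $\nu^k\rho$ for some $k\in\frac12\mathbb Z$, and since both $\rho_1$ and $\rho_2$ are unitary cuspidals any relation $\nu^{k_1}\rho_1\cong\nu^{k_2}\rho_2$ forces the intervening unramified twist to be unitary, and hence trivial. Applied to any common cuspidal in the two supports, this gives $\rho_1=\rho_2=:\rho$. With $\rho$ fixed, a direct count shows that the multiplicity of $\nu^k\rho$ in $\csupp(u_\rho(a,b))$ equals the number of pairs $(i,j)\in\frac12\mathbb Z\times\frac12\mathbb Z$ with $|i|\leq(a-1)/2$, $|j|\leq(b-1)/2$, and $i+j=k$; as a function of $k$ this is supported on a set of size $a+b-1$ and attains the maximum value $\min(a,b)$. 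Reading these two invariants off the common multiset $\csupp(\pi_1)=\csupp(\pi_2)$ gives $a_1+b_1=a_2+b_2$ and $\min(a_1,b_1)=\min(a_2,b_2)$, hence $\{a_1,b_1\}=\{a_2,b_2\}$, finishing the proof.

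The only substantive input is the passage from Ext nonvanishing to matching cuspidal supports; the rest is bookkeeping and combinatorics. If one prefers to avoid appealing to the Bernstein centre directly, one can instead use the Bernstein decomposition to dispose of distinct blocks (on which Ext vanishes in every degree), and within a single block realise $\pi_1$ as a subquotient of a suitable standard parabolic induction so as to trigger the cuspidal-support argument of Lemma \ref{lemma about standard argument on ext vanishing}, exposing a cuspidal representation lying in $\csupp(\pi_1)$ but not in $\csupp(\pi_2)$ whenever the two supports disagree.
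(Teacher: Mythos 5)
Your argument is correct and is essentially the paper's proof written out in full: the paper disposes of the lemma by "comparing cuspidal supports," and your Bernstein-centre step together with the multiplicity count (support of size $a+b-1$, maximum $\min(a,b)$) is exactly the content of the paper's remark that $V_{a_1}\otimes V_{b_1}\cong V_{a_2}\otimes V_{b_2}$ forces the pairs to agree up to swapping. Only cosmetic slips: the unramified twist is trivial because its real exponent must vanish by unitarity (a unitary unramified character need not be trivial), and the indices $(i,j)$ should run over the integer-spaced progressions centred at $0$ rather than all half-integers in the stated ranges, which is what your counts $a+b-1$ and $\min(a,b)$ actually reflect.
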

\begin{proof}

The proof follows from looking at the cuspidal supports. \end{proof}

\begin{remark}
The above lemma (proved by looking at cuspidal supports) is equivalent to the observation that if $V_1,V_2,V_3$ and $V_4$ are finite dimensional irreducible representations of $\SL_2(\mathbb{C})$ such that, \[ V_1\otimes V_2 = V_3\otimes V_4 \] then, $(V_3,V_4) = (V_1,V_2)$ or $(V_3,V_4) = (V_2,V_1)$.
\end{remark}

\begin{remark}
In the context of Lemma \ref{lemma about extensions between speh representations on same group} it would be interesting to obtain more precise information on Ext groups of Speh representations. It should be possible to study extensions between Speh representations based on a Koszul type resolution in the setting of graded Hecke algebras (see \cite{ch16}). 
\end{remark}

\subsection{Lemmas about extensions between products of unitary segment type representations}

Recall that by a unitary representation of segment type (see Section \ref{subsection on definition of speh representations of segment type}), we mean a unitary representation of the form $Z(\Delta)$ or $Q(\Delta)$ for some segment $\Delta$. In the proof of the lemma below we shall freely invoke Lemma \ref{lemma about jacquet modules of representations of segment type} and the observations in Remark \ref{remark on jacquet modules of representations of segment type} without citing them explicitly each time.

\begin{lemma} \label{lemma one about extensions between segment type representations}
Let $\pi_1$ be a unitary segment type representation of $\G_n$ and let $\tau\in \Alg(\G_k)$ (not necessarily of finite length). Let $\pi_2\in \Alg(\G_{n+k})$ be an irreducible unitary representation of Arthur type. Suppose that $\pi_2= \pi_{2,1}\times \pi_{2,2}\times \cdots \times \pi_{2,s}$ where, $\pi_{2,1},\pi_{2,2},\cdots,\pi_{2,s}$ are unitary representations of segment type. Suppose that,
\[ \Ext_{\G_{n+k}}^i(\pi_1\times \tau,\pi_2)\neq 0 \] for some integer $i\geq 0$. Then one of the following holds:
\begin{enumerate}
    \item $\pi_1 = \pi_{2,m}$ for some $m\in\{ 1,2,\cdots,s \}$. And, $\pi_2 = \pi_1\times \pi_2^{\prime}$ where $\pi_2^{\prime}\in \Alg(\G_k)$ is the product of the factors $\pi_{2,1},\cdots,$ $\pi_{2,m-1},$ $\pi_{2,m+1},\cdots,\pi_{2,s}$. Moreover, \[ \Ext_{\G_k}^j(\tau,\pi_2^{\prime})\neq 0 \] for some $j\leq i$. 
     \item $D(\pi_1) = \pi_{2,m}$ for some $m\in\{ 1,2,\cdots,s \}$. And, $\pi_2 = D(\pi_1)\times \pi_2^{\prime\prime}$ where $\pi_2^{\prime\prime}\in \Alg(\G_k)$ is the product of the factors $\pi_{2,1},\cdots,$ $\pi_{2,m-1},$ $\pi_{2,m+1},\cdots,\pi_{2,s}$. Moreover, \[ \Ext_{\G_k}^j(\tau,\pi_2^{\prime\prime})\neq 0 \] for some $j< i$. 
\end{enumerate}

\end{lemma}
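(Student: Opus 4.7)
The plan is to apply second adjointness to push the $\Ext$ onto the Levi $\G_n \times \G_k$, analyze the resulting opposite Jacquet module via the geometric lemma together with cuspidal support and unitarity considerations, and then apply the Kunneth formula combined with Lemma \ref{lemma about extensions between speh representations on same group}. Concretely, second adjointness yields
\[ \Ext^i_{\G_{n+k}}(\pi_1 \times \tau,\, \pi_2) \;\cong\; \Ext^i_{\G_n \times \G_k}\bigl(\pi_1 \otimes \tau,\; \bar{r}_{(n,k)}(\pi_2)\bigr), \]
and since $\bar{r}_{(n,k)}(\pi_2)$ has finite length, a standard long-exact-sequence d\'evissage (which does not shift the degree) produces an irreducible composition factor $\omega_1 \otimes \omega_2$ of $\bar{r}_{(n,k)}(\pi_2)$ with $\Ext^i_{\G_n \times \G_k}(\pi_1 \otimes \tau, \omega_1 \otimes \omega_2) \neq 0$. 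Then the Kunneth formula (Theorem \ref{kunneth formula}), which applies because $\pi_1$ and $\omega_1$ are irreducible, supplies indices $j + j' = i$ with $\Ext^j_{\G_n}(\pi_1, \omega_1) \neq 0$ and $\Ext^{j'}_{\G_k}(\tau, \omega_2) \neq 0$.

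Next I would enumerate the composition factors of $\bar{r}_{(n,k)}(\pi_{2,1} \times \cdots \times \pi_{2,s})$ using the geometric lemma. Each such factor is assembled from pieces of the individual Jacquet modules $\bar{r}(\pi_{2,j})$, and by Lemma \ref{lemma about jacquet modules of representations of segment type} each $\bar{r}(\pi_{2,j})$ is either zero or irreducible with opposite-sign central character exponents on its two tensor slots (Remark \ref{remark on jacquet modules of representations of segment type}). Because $\pi_1$ is unitary, $e(\pi_1) = 0$, and $\Ext^j_{\G_n}(\pi_1, \omega_1) \neq 0$ requires $\csupp(\omega_1) = \csupp(\pi_1)$ and in particular $e(\omega_1) = 0$. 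Combining the opposite-sign constraint with the balanced cuspidal support of $\pi_1$ and of each $\pi_{2,j}$, I would show that the only way for such an $\omega_1$ to arise is the diagonal contribution: $\omega_1 = \pi_{2,m}$ for a single index $m$, with every other $\pi_{2,j}$, $j \neq m$, passed intact to the $\G_k$-slot, giving $\omega_2 = \pi_2' := \prod_{j \neq m} \pi_{2,j}$. Applying Lemma \ref{lemma about extensions between speh representations on same group} to $\Ext^j_{\G_n}(\pi_1, \pi_{2,m}) \neq 0$ then forces $\pi_{2,m} \in \{\pi_1, D(\pi_1)\}$.

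Finally I would split into the two cases. If $\pi_{2,m} = \pi_1$, then the Kunneth decomposition of $\Ext^i(\pi_1 \otimes \tau, \pi_1 \otimes \pi_2') \neq 0$ supplies some $j + j' = i$ with $\Ext^j_{\G_n}(\pi_1, \pi_1) \neq 0$, and since $j \geq 0$ we get $\Ext^{j'}_{\G_k}(\tau, \pi_2') \neq 0$ with $j' \leq i$, which is conclusion (1). If instead $\pi_{2,m} = D(\pi_1) \neq \pi_1$, then $\Hom_{\G_n}(\pi_1, D(\pi_1)) = 0$ forces any surviving Kunneth contribution to have $j \geq 1$, so $j' \leq i - 1 < i$, yielding conclusion (2). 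When $\pi_1 = D(\pi_1)$ (precisely when $\pi_1$ is cuspidal) the two cases coincide and the statement reduces to (1).

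The main obstacle is the isolation step in the middle paragraph: one must exclude composition factors of $\bar{r}_{(n,k)}(\pi_2)$ in which several $\pi_{2,j}$ are simultaneously cut properly and whose positive and negative exponent contributions on the $\G_n$-side conspire to cancel and produce a balanced $\omega_1$ with $\csupp(\omega_1) = \csupp(\pi_1)$. Ruling this out requires more than exponent balance alone; one must exploit the precise linked segment structure given by Lemma \ref{lemma about jacquet modules of representations of segment type} together with the segment-type nature of $\pi_1$ concentrated on a single cuspidal line, to show that any non-trivial cancellation would either render $\omega_1$ reducible or shift its cuspidal support away from $\csupp(\pi_1)$, reducing everything to the clean diagonal case.
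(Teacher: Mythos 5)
The gap is precisely the step you flag as the main obstacle, and the repair you sketch does not work. After passing to an irreducible composition factor $\omega_1\otimes\omega_2$ of $\ropp_{(n,k)}(\pi_2)$, the only constraint you place on $\omega_1$ is $\csupp(\omega_1)=\csupp(\pi_1)$, and you then assert that any geometric-lemma term in which several $\pi_{2,j}$ are cut properly must either make $\omega_1$ reducible or move its cuspidal support off $\csupp(\pi_1)$. That dichotomy is false. Take $\rho$ the trivial character of $\G_1$, $\pi_1=Z([-1,1]_{\rho})=\mathbbm{1}_3$ and $\pi_2=Z([-1,1]_{\rho})\times Q([-1,1]_{\rho})$, an irreducible unitary Arthur type representation of $\G_6$. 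In the geometric-lemma term where $Z([-1,1]_{\rho})$ contributes its top element $\nu\rho$ to the $\G_3$-slot and $Q([-1,1]_{\rho})$ contributes its bottom piece $Q([-1,0]_{\rho})$ (see Lemma \ref{lemma about jacquet modules of representations of segment type}), the left slot is $\nu\rho\times Q([-1,0]_{\rho})$, whose composition factors include the irreducible representation $Q([-1,1]_{\rho})=D(\pi_1)$: it has exactly the cuspidal support of $\pi_1$, and $\Ext^{*}_{\G_3}(\pi_1,D(\pi_1))\neq 0$ by Theorem \ref{duality theorem of nori and prasad}. Moreover the accompanying right slot is a constituent of $Z([-1,0]_{\rho})\times\nu\rho$, which is not the complementary product $\pi_2^{\prime}$ required by the conclusion. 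Since $\tau$ is arbitrary, your d\'evissage has no way to exclude the possibility that it is exactly such a non-diagonal factor that carries the nonvanishing; in that event neither the matching $\pi_{2,m}\in\{\pi_1,D(\pi_1)\}$ with all other factors passed intact, nor the statement $\Ext^{*}_{\G_k}(\tau,\pi_2^{\prime})\neq 0$, follows.

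The paper's proof avoids this by never decomposing the $\G_n$-slot into irreducible constituents. It keeps each geometric-lemma term as a product of pieces, splits into two cases according to whether the $Z$-type factors contribute trivially to the $\G_n$-slot, and then applies Frobenius reciprocity to $\Ext^{*}_{\G_n}(\pi_1,\omega_{1,1}\times(\cdots))$, using that the relevant Jacquet module of $\pi_1$ is a single irreducible $\eta_1\otimes\eta_2$ (Lemma \ref{lemma about jacquet modules of representations of segment type}); the comparison of central character exponents and cuspidal lines is then made between $\eta_1$ and the single piece $\omega_{1,1}$, or between $\pi_1$ and a whole factor $Q(\Delta_{2,j})$, where it is decisive. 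In the example above this kills the bad term already at the level of $\G_1\times\G_2$ (one compares $\nu^{-1}\rho$ with $\nu\rho$), before the pieces can recombine into $D(\pi_1)$. To make your argument work you would have to restructure the middle step along these lines, peeling off one factor at a time via Frobenius reciprocity against the filtration by products, rather than arguing on irreducible composition factors of the full opposite Jacquet module.
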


\begin{proof}
We are given that, \[ \Ext_{\G_{n+k}}^i(\pi_1\times \tau,\pi_2)\neq 0. \] By second adjointness we have that, \begin{equation} \label{equation one for lemma two in section 5}
\Ext_{\G_n\times \G_k}^i(\pi_1\otimes \tau,\Omega)\neq 0
\end{equation}  for some irreducible subquotient $\Omega$ of $\ropp_{(n,k)}(\pi_2)$. Let us suppose that $\pi_1$ is of the form $Z(\Delta)$. The case when $\pi_1$ is of the form $Q(\Delta)$ can be handled similarly. Since $\pi_2$ is irreducible we arrange the factors of $\pi_2$ so that factors of the form $Z(\Delta)$ occur first followed by factors of the form $Q(\Delta)$.
Therefore, let us suppose that $\pi_1=Z(\Delta_0)$, $\pi_{2,i}=Z(\Delta_{2,i})$ ($i=1,2,\cdots,k$) and $\pi_{2,j}=Q(\Delta_{2,j})$ ($j=k+1,\cdots,s$). Here, $0\leq k \leq s$ is an integer and $\Delta_0,\Delta_{2,i}$ ($i=1,2,\cdots,k$) and $\Delta_{2,j}$ ($j=k+1,\cdots,s$) are some segments. Note that all the representations $\pi_{2,1},\pi_{2,2},\cdots,\pi_{2,s}$ and $\pi_1$ are unitary.

By the geometric lemma we obtain a filtration on $\ropp_{(n,k)}(\pi_2)$ where the subquotients are of the form, \[ (\omega_{1,1} \times \cdots \times\omega_{1,k} \times \xi_{1,k+1} \times\cdots \times \xi_{1,s}) \otimes (\omega_{2,1} \times \cdots \times \omega_{2,k} \times \xi_{2,k+1} \times \cdots  \times \xi_{2,s}), \] where $\omega_{1,i}\otimes \omega_{2,i}$ is the opposite Jacquet module of $\pi_{2,i}$ (here $i=1,2,\cdots,k$) with respect to a suitable opposite parabolic. Similarly, $\xi_{1,j}\otimes \xi_{2,j}$ is the opposite Jacquet module of $\pi_{2,j}$ (here $j=k+1,\cdots,s$) with respect to a suitable opposite parabolic.

We wish to analyse which of the above subquotients can make a non-zero contribution in Equation \ref{equation one for lemma two in section 5}. We carry out this analysis by considering the kinds of subquotients as in the following two cases.

\textbf{Case 1:} Suppose each of the factors $\omega_{1,1}, \omega_{1,2},\cdots,\omega_{1,k}$ is equal to the trivial representation of $\G_0=\langle e \rangle$. Let us suppose that a subquotient of this kind makes a non-zero contribution in Equation \ref{equation one for lemma two in section 5}. In this case by Equation \ref{equation one for lemma two in section 5} and the Kunneth formula we conclude that, \begin{equation}  \label{equation two for lemma two in section five}
\Ext_{\G_n}^p(\pi_1,\xi_{1,k+1} \times\cdots \times \xi_{1,s})\neq 0    
\end{equation} for some integer $p\leq i$.

Note that $e(\xi_{1,j})\leq 0$ for all $j=k+1,\cdots,s$ whereas, $e(\pi_1) = 0$. Therefore, by comparing central characters we conclude that for all $j=k+1,\cdots,s$ either $\xi_{1,j}=Q(\Delta_{2j})$ or $\xi_{1,j}$ is the trivial representation of $\G_0$. We now claim that $\xi_{1,j}=Q(\Delta_{2j})$ for exactly one $j\in \{k+1,\cdots,s\}$. 

Suppose for the sake of contradiction that $\xi_{1,j}=Q(\Delta_{2j})$ for two distinct integers $j_1$ and $j_2$. Without loss of generality we may take $j_1=k+1$ and $j_2=k+2$. By Equation \ref{equation two for lemma two in section five} we know that, \[ \Ext_{\G_n}^p(\pi_1,Q(\Delta_{2,k+1}) \times Q(\Delta_{2,k+2}) \times\zeta)\neq 0, \] where, $\zeta = \xi_{1,k+3} \times\cdots \times \xi_{1,s}$. Now by Frobenius reciprocity, \[ \Ext_{\G_l \times \G_{n-l}}^p(\eta_1\otimes \eta_2,Q(\Delta_{2,k+1}) \otimes (Q(\Delta_{2,k+2}) \times\zeta))\neq 0, \] where, $l=n(Q(\Delta_{2,k+1}))$ and $\eta_1\otimes \eta_2$ is the Jacquet module of $\pi_1$ with respect to a suitable parabolic. Hence we obtain that, \[ \Ext_{\G_l}^q(\eta_1,Q(\Delta_{2,k+1}))\neq 0, \] for some integer $q\leq p$. Now since $e(\eta_1)>0$ and $e(Q(\Delta_{2,k+1})) = 0$, by comparing central characters we obtain a contradiction.

Hence we conclude that in Case 1, the only subquotient of $\ropp_{(n,k)}(\pi_2)$ that can make a non-zero contribution in Equation \ref{equation one for lemma two in section 5} is of the form, \[ \Omega = Q(\Delta_{2j}) \otimes \pi^{\prime\prime} \] where, $\pi^{\prime\prime}$ is the product of segment type representations $\pi_{2,1},\pi_{2,2},\cdots,\pi_{2,s}$ except the term $Q(\Delta_{2j})$.

Now by Equation \ref{equation one for lemma two in section 5}, applying the Kunneth formula and Lemma \ref{lemma about extensions between speh representations on same group} proves part (2) of our lemma.

\textbf{Case 2:} Now suppose that atleast one of the factors $\omega_{1,1}, \omega_{2,1},\cdots,\omega_{1,k}$ is a representation of $\G_M$, where $M\geq 1$. Without loss of generality we may assume that $\omega_{1,1}$ is a representation of $\G_M$, where $M\geq 1$. Let us suppose that a subquotient of this kind makes a non-zero contribution in Equation \ref{equation one for lemma two in section 5}. Now by Equation \ref{equation one for lemma two in section 5} and the Kunneth formula we have that, \[
\Ext_{\G_n}^p(\pi_1,\omega_{1,1}\times \omega_{2,1},\times \cdots\times \omega_{1,k} \times \xi_{1,k+1} \times\cdots \times \xi_{1,s})\neq 0\] for some integer $p\leq i$.

Hence, by Frobenius reciprocity we have that, 
\[ \Ext_{\G_l \times \G_{n-l}}^p(\eta_1\otimes \eta_2,\omega_{1,1}\otimes (\omega_{2,1},\times \cdots\times \omega_{1,k} \times \xi_{1,k+1} \times\cdots \times \xi_{1,s}))\neq 0, \] where, $l=n(\omega_{1,1})$ and $\eta_1\otimes \eta_2$ is the Jacquet module of $\pi_1$ with respect to a suitable parabolic. Hence we have that, \begin{equation} \label{equation three for lemma two in section five}
\Ext_{\G_l}^q(\eta_1,\omega_{1,1})\neq 0,    
\end{equation}
for some integer $q\geq 0$. Let us suppose that $\pi_1= Z(\Delta_0)= $ $Z([-\frac{(x-1)}{2},\frac{(x-1)}{2}]_{\rho_1})$ and $\pi_{2,1} = Z(\Delta_{2,1})= Z([-\frac{(y-1)}{2},\frac{(y-1)}{2}]_{\rho_2})$, where $x,y$ are integers such that $x,y\geq 1$, and $\rho_1$ and $\rho_2$ are some unitary cuspidal representations.

Then $\eta_1 = Z([-\frac{(x-1)}{2},w]_{\rho_1})$ and $\omega_{1,1}= Z([z,\frac{(y-1)}{2}]_{\rho_2})$, where, $w,z$ are integers such that $w\leq \frac{(x-1)}{2}$ and $z\geq -\frac{(y-1)}{2}$. Now by comparing cuspidal supports in Equation \ref{equation three for lemma two in section five} we conclude that $\rho_1=\rho_2$, $w=\frac{(y-1)}{2}$ and $z=-\frac{(x-1)}{2}$. Substituting these in the above inequalities we conclude that $x=y$. Hence $\eta_1= \pi_1$ and $\omega_{1,1}= \pi_{2,1}\cong \pi_1$.

Hence we conclude that in Case 2, the only subquotient of $\ropp_{(n,k)}(\pi_2)$ that can make a non-zero contribution in Equation \ref{equation one for lemma two in section 5} is of the form, \[ \Omega = Z(\Delta_{2i}) \otimes \pi^{\prime}, \] where, $\pi^{\prime\prime}$ is the product of segment type representations $\pi_{2,1},\pi_{2,2},\cdots,\pi_{2,s}$ except the term $Z(\Delta_{2i})$. Now using Equation \ref{equation one for lemma two in section 5} and applying the Kunneth formula proves part (1) of our lemma. \end{proof}

\begin{remark}
In the above lemma, the hypothesis that $\pi_1$ and $\pi_{2,1},\pi_{2,2},$ $\cdots,\pi_{2,s}$ are unitary representations of segment type cannot be dropped. For instance, the above lemma does not hold if we assume $\pi_1$ to be an arbitrary Speh representation.

Consider the following example. Let $\rho$ denote the trivial representation of $\G_1$. Let $\pi_1 = Q([0,1]_{\rho},[-1,0]_\rho)$ and $\pi_2 =  \rho \times Q([-1,0,1]_\rho)$. We claim that, \[ \Ext^1_{\G_4}(\pi_1,\pi_2) \neq 0. \] By Frobenius reciprocity we have that, \[ \Ext^i_{\G_4}(\pi_1,\pi_2) = \Ext^i_{\G_1\times \G_3}(\rho \otimes Q([0,1]_{\rho},[-1]_\rho),\rho \otimes  Q([-1,0,1]_\rho) ) \neq 0. \] Here, we have used the description of Jacquet modules of ladder representations in \cite{kl12}. By comparing cuspidal supports $\rho \otimes Q([0,1]_{\rho},[-1]_\rho)$ is the only subquotient of the $r_{(1,3)}(\pi_1)$ that can contribute to the above $\Ext$ module. By \cite{or05} we know that,  \[ \Ext^1_{\G_3}(Q([0,1]_{\rho},[-1]_\rho),Q([-1,0,1]_\rho))\neq 0.\] Applying the Kunneth formula we conclude that, \[ \Ext^1_{\G_4}(\pi_1,\pi_2)\neq 0. \]
\end{remark}

\begin{remark}
The above lemma can be thought of as an $\Ext$ analogue of \cite[Proposition 4.2]{ch22} albeit with much more restrictive hypothesis. The Proposition 4.2 in \cite{ch22} follows from \cite[Theorem 9.1]{ch22} (see \cite{ch24product} for a generalization of \cite[Theorem 9.1]{ch22}).
\end{remark}

The following lemma is like a converse to Lemma \ref{lemma one about extensions between segment type representations}. The statement of the lemma below is obvious if one takes $\Hom$ instead of $\Ext$ modules.

\begin{lemma} \label{lemma two about extensions between segment type representations}
 Let $\pi_1$ be a unitary segment type representation of $\G_n$ and let $\tau\in \Alg(\G_k)$ (not necessarily of finite length). Let $\sigma \in \Alg(\G_{k})$ be an irreducible unitary representation of Arthur type. Suppose that $\sigma = \sigma_1 \times \sigma_2 \times \cdots \times \sigma_s$ where, $\sigma_1,\sigma_2,\cdots,\sigma_s$ are unitary representations of segment type. Suppose that, \[ \Ext_{\G_{k}}^j(\tau,\sigma)\neq 0 \] for some integer $j\geq 0$. Then
\[ \Ext_{\G_{n+k}}^i(\pi_1\times \tau,\pi_1 \times  \sigma)\neq 0 \] for some integer $i\geq 0$. 
\end{lemma}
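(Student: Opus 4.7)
The plan is to apply Bernstein's second adjointness to move from $\G_{n+k}$ down to the Levi $M = \G_n \times \G_k$, and then use the geometric lemma to isolate the identity-orbit contribution, which will be nonzero by hypothesis and the Kunneth formula.

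Concretely, second adjointness gives
\[
\Ext^*_{\G_{n+k}}(\pi_1 \times \tau, \pi_1 \times \sigma) \;\cong\; \Ext^*_{M}\bigl(\pi_1 \otimes \tau,\; r^{\mathrm{opp}}_{(n,k)}(\pi_1 \times \sigma)\bigr).
\]
The geometric lemma places $\pi_1 \otimes \sigma$ as a subrepresentation of $r^{\mathrm{opp}}_{(n,k)}(\pi_1 \times \sigma)$ (the identity shuffle orbit), with quotient $Q$ filtered by subquotients
$C_l = (\alpha_l \times \gamma_l) \otimes (\beta_l \times \delta_l)$ indexed by $1 \le l \le \min(n,k)$, where $\alpha_l \otimes \beta_l = r_{(n-l,l)}(\pi_1)$ and $\gamma_l \otimes \delta_l = r_{(l,k-l)}(\sigma)$. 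Kunneth (Theorem \ref{kunneth formula}), applied with $E_1 = F_1 = \pi_1$ irreducible, yields
\[
\Ext^j_{M}(\pi_1 \otimes \tau, \pi_1 \otimes \sigma) \;\supseteq\; \mathrm{End}(\pi_1) \otimes \Ext^j_{\G_k}(\tau, \sigma) \;\neq\; 0.
\]
Hence once $\Ext^{j-1}_{M}(\pi_1 \otimes \tau, Q) = 0$ is known, the long exact sequence coming from $0 \to \pi_1 \otimes \sigma \to r^{\mathrm{opp}}_{(n,k)}(\pi_1 \times \sigma) \to Q \to 0$ forces $\Ext^j_{\G_{n+k}}(\pi_1 \times \tau, \pi_1 \times \sigma) \neq 0$, establishing the lemma with $i = j$.

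To prove the vanishing, I would handle each $C_l$ with $l \ge 1$ individually. By Lemma \ref{lemma about jacquet modules of representations of segment type}, $r_{(n-l,l)}(\pi_1)$ is zero unless $l$ is a multiple of $n(\rho)$, in which case $\alpha_l$ and the left-factor $\pi_1'$ of $r^{\mathrm{opp}}_{(n-l,l)}(\pi_1)$ are explicit irreducibles of segment type. A direct computation in the unitary normalisation of $\Delta_0$ shows that when $l = p \cdot n(\rho)$ with $1 \le p \le k-1$, the central character exponents are $e(\alpha_l) = -p(k-p)/2$ and $e(\pi_1') = +p(k-p)/2$ for the $Z$-type case (with signs reversed for the $Q$-type). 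Writing $A_l = \alpha_l \times \gamma_l$ as parabolic induction on the $\G_n$-factor and applying second adjointness,
\[
\Ext^*_{M}(\pi_1 \otimes \tau, A_l \otimes B_l) \;=\; \Ext^*_{\G_{n-l} \times \G_l \times \G_k}\bigl(\alpha_l \otimes \gamma_l \otimes B_l,\; \pi_1' \otimes \pi_1'' \otimes \sigma\bigr).
\]
Kunneth on the $\G_{n-l}$-factor then isolates the factor $\Ext^*_{\G_{n-l}}(\alpha_l, \pi_1')$, which vanishes since $\alpha_l$ and $\pi_1'$ are irreducibles with different central characters.

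The hard part will be the extremal shuffle orbit $l = \min(n,k) = n$ (assuming $n \le k$), where $r^{\mathrm{opp}}_{(0,n)}(\pi_1) = \pi_1$ and the central-character separation degenerates; the subquotient $C_n$ becomes $\gamma_n \otimes (\pi_1 \times \delta_n)$ with $\gamma_n$ a Jacquet piece of $\sigma$ on $\G_n$. In this case I would apply Lemma \ref{lemma one about extensions between segment type representations} to the $\G_k$-side factor $B_n = \pi_1 \times \delta_n$, together with the hypothesis that $\sigma = \sigma_1 \times \cdots \times \sigma_s$ is an irreducible product of unitary segment-type representations, to show that only subquotients of $r_{(n,k-n)}(\sigma)$ satisfying $e(\gamma_n) = 0$ can give a non-zero Kunneth factor $\Ext^*_{\G_n}(\pi_1, \gamma_n)$; these correspond to $\gamma_n$ being a product of some of the $\sigma_i$ summing to $\G_n$, and the analysis of Lemma \ref{lemma one about extensions between segment type representations} restricts this enough (to the case $\gamma_n = \pi_1$ as some $\sigma_{i_0}$) that one can reduce to a strictly smaller instance and conclude by induction on $s$. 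Combining the non-extremal central-character argument with this inductive treatment of the extremal orbit yields $\Ext^*_M(\pi_1 \otimes \tau, Q) = 0$, completing the proof.
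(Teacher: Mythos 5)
Your skeleton (second adjointness, the identity orbit $\pi_1\otimes\sigma$ realized as a submodule of $\ropp_{(n,k)}(\pi_1\times\sigma)$, the Kunneth computation in degree $j$, then a long exact sequence) is the same as the paper's, but the step your argument hinges on --- the blanket vanishing $\Ext^{*}_{\G_n\times\G_k}(\pi_1\otimes\tau,Q)=0$ for the quotient $Q$ --- is false in general, and this is a genuine gap. Two situations break it. First, if some $\sigma_{i_0}\cong\pi_1$, the extremal orbit contributes a subquotient $\gamma\otimes(\pi_1\times\delta)$ with $\gamma=\sigma_{i_0}\cong\pi_1$ and $\pi_1\times\delta\cong\sigma$, i.e. $Q$ contains another copy of $\pi_1\otimes\sigma$, against which $\Ext^{j}$ is nonzero by your own Kunneth step; no vanishing statement can hold, and the proposed induction on $s$ does not remove this term. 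Second, if some $\sigma_m\cong D(\pi_1)$, then $Q$ contains a subquotient $D(\pi_1)\otimes\sigma'$ with $\sigma'=\pi_1\times\prod_{i\neq m}\sigma_i$, and $\Ext^{*}_{\G_n}(\pi_1,D(\pi_1))\neq 0$ by the Nori--Prasad duality (Theorem \ref{duality theorem of nori and prasad}), while $\Ext^{*}_{\G_k}(\tau,\sigma')$ may also be nonzero; your restriction of the extremal orbit ``to the case $\gamma_n=\pi_1$'' overlooks $\gamma_n=D(\pi_1)$, which has central exponent $0$ and the same cuspidal support as $\pi_1$. With nonvanishing contributions from both the submodule and the quotient in the same degrees, the long exact sequence alone does not force $\Ext^{j}$ of the induced module to be nonzero: cancellation through the connecting maps is possible, so the conclusion ``with $i=j$'' is not justified.

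The paper closes exactly this gap with a minimal-degree device that your proposal lacks. By the analysis of Lemma \ref{lemma one about extensions between segment type representations}, the only subquotients of $\ropp_{(n,k)}(\pi_1\times\sigma)$ that can contribute are of the two types $\Omega_1=\pi_1\otimes\sigma$ and $\Omega_2=D(\pi_1)\otimes\sigma'$, and each can be arranged to occur as a \emph{submodule} of the opposite Jacquet module (for $\Omega_2$ one rewrites $\pi_1\times\sigma=D(\pi_1)\times\sigma'$ and applies the identity-orbit statement to that factorization). One then takes the minimal degree $i^{*}$ in which one of $\Omega_1,\Omega_2$ has nonzero $\Ext$ against $\pi_1\otimes\tau$ (the relevant set is nonempty by the Kunneth step), and the long exact sequence in degree $i^{*}$ succeeds because every contributing subquotient of the corresponding quotient is again of type $\Omega_1$ or $\Omega_2$ and hence has vanishing $\Ext^{i^{*}-1}$ by minimality; this yields nonvanishing in degree $i^{*}$ (which need not equal $j$). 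Your central-character analysis of the non-extremal orbits is fine and consistent with the paper, but without the minimal-degree/submodule argument the proof does not close.
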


\begin{proof}
By second adjointness we have that, \begin{equation} \label{equation one for lemma three in section five}
    \Ext_{\G_{n+k}}^i(\pi_1\times \tau,\pi_1 \times  \sigma) = \Ext_{\G_{n}\times \G_k}^i(\pi_1\otimes \tau,\bar{r}_P(\pi_1 \times  \sigma)).
\end{equation} Here $\bar{r}_P$ denotes the opposite Jacquet module with respect to the opposite parabolic corresponding to the partition $(n,k)$. By the analysis carried out in the proof of Lemma \ref{lemma one about extensions between segment type representations}, the only subquotients of $\bar{r}_P(\pi_1\times \sigma)$ that can make a non-zero contribution to the $\Ext$ module in the right hand side of Equation \ref{equation one for lemma three in section five} are of the following two kinds:
\begin{enumerate}
    \item[(A)] $\Omega_1 = \pi_1 \otimes \sigma$. Note that by the discussion in Section \ref{section on geometric lemma}, $\Omega_1$ occurs as a \textit{submodule} of $\ropp_{P}(\pi_1 \times \sigma)$.
    \item[(B)] $\Omega_2 = D(\pi_1)\otimes \sigma^{\prime}$. Note that if $\Omega_2$ occurs as a subquotient of $\ropp_{P}(\pi_2)$ then there exist $m \in \{1,2,\cdots,s\}$ such that $\sigma_m= D(\pi_1)$. Here, $\sigma^{\prime} = \pi_1 \times \sigma_1 \times \cdots \times \sigma_{m-1} \times \sigma_{m+1}\times \cdots \times \sigma_s$. In this case we have that $\pi_1 \times \sigma = D(\pi_1)\times \sigma^{\prime}$ and once again by the discussion in Section \ref{section on geometric lemma}, we can ensure that $\Omega_2$ occurs as a \textit{submodule} of $\ropp_{P}(\pi_1 \times \sigma)$.
\end{enumerate}

We are given that \begin{equation} \label{equation two for lemma three in section five}
    \Ext_{\G_{k}}^j(\tau,\sigma)\neq 0
\end{equation} for some integer $j\geq 0$.
Also, we know that \begin{equation} \label{equation three for lemma three in section five}
    \Hom_{\G_{n}}(\pi_1,\pi_1)\neq 0.
\end{equation} Hence by Equation \ref{equation two for lemma three in section five}, Equation \ref{equation three for lemma three in section five} and the Kunneth formula we conclude that, \begin{equation} \label{equation four for lemma three in section five}
    \Ext_{\G_n \times \G_{k}}^j(\pi_1\otimes \tau,\pi_1\otimes \sigma)\neq 0.
\end{equation} We define the following sets. For $t=1,2$ we define, \[ S_t = \{ h\in \mathbb{Z} | \Ext_{\G_n \times \G_{k}}^h(\pi_1\otimes \tau,\Omega_t)\neq 0  \}. \] Let us denote $S = S_1 \cup S_2$. Then by Equation \ref{equation four for lemma three in section five} it is evident that the set $S$ is non-empty. Let $i^*$ denote the smallest integer in the set $S$. Then $\Ext_{\G_n \times \G_{k}}^{i^*}(\pi_1\otimes \tau,\Omega_{t^*})\neq 0$ where $t^*\in \{1, 2\}$.

Since $\Omega_{t^*}$ occurs as a \textit{submodule} of $\ropp_{P}(\pi_1 \times \sigma)$ by a long exact sequence argument we get that, \[ \Ext_{\G_{n}\times \G_k}^{i^*}(\pi_1\otimes \tau,\bar{r}_P(\pi_1 \times  \sigma)) \neq 0. \] Hence by Frobenius reciprocity we conclude that, \begin{equation*}
\Ext_{\G_{n+k}}^{i^*}(\pi_1\times \tau,\pi_1 \times  \sigma)\neq 0. \qedhere \end{equation*}  
\end{proof}

We also have the following similar lemma. 

\begin{lemma} \label{lemma three about extensions between segment type representations}
Let $\pi_1$ be a unitary segment type representation of $\G_n$ and let $\tau\in \Alg(\G_k)$ (not necessarily of finite length). Let $\sigma \in \Alg(\G_{k})$ be an irreducible unitary representation of Arthur type. Suppose that $\sigma = \sigma_1 \times \sigma_2 \times \cdots \times \sigma_s$ where, $\sigma_1,\sigma_2,\cdots,\sigma_s$ are unitary representations of segment type. Suppose that, \[ \Ext_{\G_{k}}^j(\tau,\sigma)\neq 0 \] for some integer $j\geq 0$. Then
\[ \Ext_{\G_{n+k}}^i(\pi_1\times \tau,D(\pi_1) \times  \sigma)\neq 0 \] for some integer $i\geq 0$. 
\end{lemma}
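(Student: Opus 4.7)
The plan is to mirror the proof of Lemma \ref{lemma two about extensions between segment type representations}, replacing the input $\Hom_{\G_n}(\pi_1,\pi_1)\neq 0$ coming from Schur's lemma with a non-vanishing of $\Ext^*_{\G_n}(\pi_1,D(\pi_1))$ coming from the Nori--Prasad cohomological duality theorem (Theorem \ref{duality theorem of nori and prasad}).

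First I apply second adjointness to rewrite
\[
\Ext^i_{\G_{n+k}}(\pi_1\times\tau,\,D(\pi_1)\times\sigma)\;=\;\Ext^i_{\G_n\times\G_k}\bigl(\pi_1\otimes\tau,\,\bar r_P(D(\pi_1)\times\sigma)\bigr),
\]
where $P$ is the parabolic associated with the partition $(n,k)$. The geometric lemma equips $\bar r_P(D(\pi_1)\times\sigma)$ with a filtration whose subquotients are tensor products obtained from the Jacquet modules of $D(\pi_1)$ and of each $\sigma_m$. Repeating the Case~1 / Case~2 cuspidal support analysis from the proof of Lemma \ref{lemma one about extensions between segment type representations}, the only subquotients capable of contributing to $\Ext^*_{\G_n\times\G_k}(\pi_1\otimes\tau,\,-)$ are
\[
\Omega_1\;=\;\pi_1\otimes\sigma'\quad\text{(possible only when some factor of }D(\pi_1)\times\sigma\text{ is isomorphic to }\pi_1\text{)},
\]
\[
\Omega_2\;=\;D(\pi_1)\otimes\sigma\quad\text{(always present, as the identity orbit submodule).}
\]
Both are submodules of $\bar r_P(D(\pi_1)\times\sigma)$, since in either case the product $D(\pi_1)\times\sigma$ can be rewritten so that the desired tensor factor corresponds to the identity double coset, whose contribution to the opposite Jacquet module sits as a submodule by Section \ref{section on geometric lemma}.

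Next I produce a non-zero $\Ext$ on $\Omega_2$. By Theorem \ref{duality theorem of nori and prasad} applied to the irreducible representation $D(\pi_1)$,
\[
\Ext^{d(\pi_1)-i}_{\G_n}\bigl(D(\pi_1),\,D(\pi_1)\bigr)^{\vee}\;\simeq\;\Ext^{i}_{\G_n}\bigl(\pi_1,\,D(\pi_1)\bigr),
\]
and taking $i=d(\pi_1)$ gives $\Ext^{d(\pi_1)}_{\G_n}(\pi_1,D(\pi_1))\neq 0$ from Schur's lemma for $D(\pi_1)$. Combining this with the hypothesis $\Ext^j_{\G_k}(\tau,\sigma)\neq 0$ via the Kunneth formula (Theorem \ref{kunneth formula}) yields
\[
\Ext^{d(\pi_1)+j}_{\G_n\times\G_k}\bigl(\pi_1\otimes\tau,\,\Omega_2\bigr)\neq 0.
\]

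Finally, let $i^\ast$ be the smallest integer for which $\Ext^{i^\ast}_{\G_n\times\G_k}(\pi_1\otimes\tau,\Omega_t)\neq 0$ for some $t\in\{1,2\}$; the previous step guarantees $i^\ast\leq d(\pi_1)+j$. Since $\Omega_{t^\ast}$ occurs as a submodule of $\bar r_P(D(\pi_1)\times\sigma)$, the long exact sequence associated with $0\to\Omega_{t^\ast}\to\bar r_P(D(\pi_1)\times\sigma)\to Q\to 0$ together with the minimality of $i^\ast$ and the vanishing of $\Ext^{<i^\ast}$ on every other geometric-lemma subquotient (by the same Case~1/Case~2 analysis, which kills any contribution from a subquotient not of type $\Omega_1$ or $\Omega_2$) forces
\[
\Ext^{i^\ast}_{\G_n\times\G_k}\bigl(\pi_1\otimes\tau,\,\bar r_P(D(\pi_1)\times\sigma)\bigr)\neq 0,
\]
and Frobenius reciprocity then gives $\Ext^{i^\ast}_{\G_{n+k}}(\pi_1\times\tau,\,D(\pi_1)\times\sigma)\neq 0$.

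I expect the main (but routine) obstacle to be the long exact sequence step, exactly as in the proof of Lemma \ref{lemma two about extensions between segment type representations}: one must carefully propagate the non-vanishing from the bottom submodule up the filtration, using that every intermediate subquotient outside the list $\{\Omega_1,\Omega_2\}$ has $\Ext$-vanishing against $\pi_1\otimes\tau$ in all degrees. Once that bookkeeping is executed (identically to \ref{lemma two about extensions between segment type representations}), the only genuinely new input is the Nori--Prasad-based non-vanishing $\Ext^{d(\pi_1)}_{\G_n}(\pi_1,D(\pi_1))\neq 0$, which replaces $\Hom_{\G_n}(\pi_1,\pi_1)=\mathbb{C}$ in the earlier argument.
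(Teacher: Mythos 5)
Your proposal is correct and matches the paper's argument, which simply runs the proof of Lemma \ref{lemma two about extensions between segment type representations} verbatim with the input $\Hom_{\G_n}(\pi_1,\pi_1)\neq 0$ replaced by $\Ext^{*}_{\G_n}(\pi_1,D(\pi_1))\neq 0$, obtained exactly as you do from the Nori--Prasad duality theorem (Theorem \ref{duality theorem of nori and prasad}). The identification of the only possibly contributing subquotients, the Kunneth step, and the minimal-degree/long-exact-sequence argument are all the same as in the paper.
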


The proof of the above lemma is along similar lines as Lemma \ref{lemma two about extensions between segment type representations} except that we use the fact that $\Ext_{\G_{n}}^*(\pi_1,D(\pi_1))\neq 0$ instead of Equation \ref{equation three for lemma three in section five} in the proof of the Lemma \ref{lemma two about extensions between segment type representations}. The fact that $\Ext_{\G_{n}}^*(\pi_1,D(\pi_1))\neq 0$ is a simple consequence of the duality theorem (Theorem \ref{duality theorem of nori and prasad}) of Nori and Prasad.

\subsection{A reformulation}

We can reformulate Lemmas \ref{lemma one about extensions between segment type representations}, \ref{lemma two about extensions between segment type representations} and \ref{lemma three about extensions between segment type representations} into a single statement in the language of Arthur parameters as follows. Let $\Delta \SL_2$ denote the diagonally embedded $\SL_2$ sitting inside $\SL_2 \times \SL_2$. Note that if $\pi_1$ and $\pi_2$ are irreducible Arthur type representations of $\G_n$ then, $\A(\pi_1)|_{W_F\times \Delta \SL_2} \cong \A(\pi_2)|_{W_F\times \Delta \SL_2}$ if and only if $\pi_1$ and $\pi_2$ have the same cuspidal support.

\begin{lemma} \label{lemma on reformulation}
Let $\pi_1$ be a unitary segment type representation of $\G_n$ and let $\tau\in \Alg(\G_k)$ (not necessarily of finite length). Let $\pi_2\in \Alg(\G_{n+k})$ be an irreducible unitary representation of Arthur type. Suppose that $\pi_2= \pi_{2,1}\times \pi_{2,2}\times \cdots \times \pi_{2,s}$ where, $\pi_{2,1},\pi_{2,2},\cdots,\pi_{2,s}$ are unitary representations of segment type. Then,
\[ \Ext_{\G_{n+k}}^*(\pi_1\times \tau,\pi_2)\neq 0 \] if and only if $\pi_2$ decomposes as, $\pi_2 = \pi_2^{\prime}\times \pi_2^{\prime \prime}$ where, $\pi_2^{\prime}\in \Alg(\G_{n})$ and $\pi_2^{\prime\prime}\in \Alg(\G_{k})$ such that, \[ \A(\pi_1)|_{W_F\times \Delta \SL_2} \cong \A(\pi_2^{\prime})|_{W_F\times \Delta \SL_2} \] and, \[ \Ext_{\G_{k}}^*(\tau,\pi_2^{\prime \prime})\neq 0. \]
\end{lemma}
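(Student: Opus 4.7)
The plan is to derive this reformulation by combining Lemmas \ref{lemma one about extensions between segment type representations}, \ref{lemma two about extensions between segment type representations}, and \ref{lemma three about extensions between segment type representations}, handling the two implications separately.

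For the \emph{only if} direction, the hypothesis $\Ext^*_{\G_{n+k}}(\pi_1 \times \tau, \pi_2) \neq 0$ feeds directly into Lemma \ref{lemma one about extensions between segment type representations}, whose conclusion gives two mutually exclusive alternatives: either some factor $\pi_{2,m}$ in the segment type decomposition of $\pi_2$ coincides with $\pi_1$, or some factor coincides with $D(\pi_1)$. In either case I would set $\pi_2' = \pi_{2,m}$ and let $\pi_2''$ be the product of the remaining factors; by the cited lemma this produces the factorization $\pi_2 = \pi_2' \times \pi_2''$ together with $\Ext^*_{\G_k}(\tau, \pi_2'') \neq 0$. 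The Arthur parameter matching is automatic when $\pi_2' = \pi_1$, and when $\pi_2' = D(\pi_1)$ it follows from Theorem \ref{Aubert-Zelevinsky Dual of a Speh Representation}: the Aubert--Zelevinsky involution swaps the Deligne $\SL_2$ with the Arthur $\SL_2$ in the Arthur parameter, so restricting $\A(D(\pi_1))$ to $W_F \times \Delta\SL_2$ returns the same representation as restricting $\A(\pi_1)$.

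For the \emph{if} direction the main task is a rigidity step: showing that the hypothesis $\A(\pi_1)|_{W_F\times \Delta\SL_2} \cong \A(\pi_2')|_{W_F\times \Delta\SL_2}$, which as noted just before the lemma is equivalent to $\csupp(\pi_2') = \csupp(\pi_1)$, forces $\pi_2'$ to equal either $\pi_1$ or $D(\pi_1)$. Since $\pi_2$ is irreducible with the given decomposition into unitary segment type factors $\pi_{2,i}$, uniqueness of the Speh decomposition for Arthur type representations lets me express $\pi_2'$ as the product of some subcollection of the $\pi_{2,i}$. Writing the segment of each such factor as $\Delta(\rho_i, a_i) = [-\tfrac{a_i-1}{2}, \tfrac{a_i-1}{2}]_{\rho_i}$ and the segment of $\pi_1$ as $\Delta(\rho, a)$, the cuspidal support equality forces $\rho_i = \rho$ and each $a_i$ to have the same parity as $a$. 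A parity check then shows that all these symmetric sub-segments share a common cuspidal (either $\rho$ itself when $a$ is odd, or $\nu^{\pm 1/2}\rho$ when $a$ is even), so having two or more such factors in $\pi_2'$ would produce a repeated cuspidal in $\csupp(\pi_2')$, contradicting the multiplicity-one cuspidal support of $\pi_1$. Hence $\pi_2'$ is a single unitary segment type representation with cuspidal support $\Delta(\rho, a)$, forcing $\pi_2' \in \{Z(\Delta(\rho, a)), Q(\Delta(\rho, a))\} = \{\pi_1, D(\pi_1)\}$.

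With $\pi_2'$ pinned down, the proof would conclude by invoking Lemma \ref{lemma two about extensions between segment type representations} when $\pi_2' = \pi_1$ and Lemma \ref{lemma three about extensions between segment type representations} when $\pi_2' = D(\pi_1)$, applied to the given nonvanishing $\Ext^*_{\G_k}(\tau, \pi_2'') \neq 0$. The main obstacle I anticipate is precisely the rigidity argument above; everything else is a routine assembly of the three preceding lemmas together with the fact that the Aubert--Zelevinsky involution swaps the two $\SL_2$ factors of the Arthur parameter.
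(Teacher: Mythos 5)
Your proposal is correct and follows essentially the same route as the paper, which states this lemma as a direct reformulation of Lemmas \ref{lemma one about extensions between segment type representations}, \ref{lemma two about extensions between segment type representations} and \ref{lemma three about extensions between segment type representations} without further argument. Your rigidity step (using multiplicity-one cuspidal support of a centered segment plus uniqueness of the Speh decomposition to force $\pi_2'\in\{\pi_1,D(\pi_1)\}$) is exactly the content the paper leaves implicit, and it is carried out correctly.
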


\subsection{A conjecture about extensions on the same group}

Since we are dealing with questions about extensions on the same group in this section, it is natural to ask when non-trivial extensions exist between Arthur type representations on the same group. The following conjecture arose in an email correspondence between Professor Dipendra Prasad and Professor K.Y. Chan.

\begin{conjecture} \label{conjecture on extensions on same group}
    
Suppose $\pi_1$ and $\pi_2$ are irreducible Arthur type representations of $\GL_n(F)$. Then $\Ext^*_{\GL_n(F)}(\pi_1,\pi_2)\neq 0$ if and only if \[ \A(\pi_1)|_{W_F\times \Delta \SL_2} \cong \A(\pi_2)|_{W_F\times \Delta \SL_2}. \]

\end{conjecture}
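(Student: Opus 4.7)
The plan is to separate the conjecture into its two directions and address them differently. The forward direction follows from the Bernstein decomposition of $\Alg(\G_n)$: non-vanishing of $\Ext^{\ast}_{\G_n}(\pi_1,\pi_2)$ forces $\pi_1$ and $\pi_2$ to lie in a common Bernstein block, hence share the same supercuspidal support; for irreducible Arthur type representations, coincidence of supercuspidal support is in turn equivalent to $\A(\pi_1)|_{W_F\times\Delta\SL_2}\cong\A(\pi_2)|_{W_F\times\Delta\SL_2}$, by applying Clebsch-Gordan to each summand $V_a\otimes V_b|_{\Delta\SL_2}$.

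For the backward direction I would proceed by induction on $n$, writing $\pi_1=\sigma_1\times\cdots\times\sigma_r$ and $\pi_2=\tau_1\times\cdots\times\tau_s$ as products of Speh representations. The base case $r=s=1$ is handled as follows: by Lemma \ref{lemma about extensions between speh representations on same group} and a cuspidal support analysis, sharing cuspidal support forces $\pi_2=\pi_1$ or $\pi_2=D(\pi_1)$; in the first case $\Ext^0_{\G_n}(\pi_1,\pi_1)=\mathbb{C}$, and in the second Theorem \ref{duality theorem of nori and prasad} combined with Theorem \ref{Aubert-Zelevinsky Dual of a Speh Representation} yields $\Ext^{d(\pi_1)}_{\G_n}(\pi_1,D(\pi_1))\neq 0$. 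For the inductive step the strategy is to mimic Lemma \ref{lemma on reformulation} beyond the segment type hypothesis: select a Speh factor $\sigma_i$ of $\pi_1$; use the combinatorial hypothesis on restricted Arthur parameters to rewrite $\pi_2=\sigma_i\times\pi_2'$ or $\pi_2=D(\sigma_i)\times\pi_2'$ with $\pi_1/\sigma_i$ and $\pi_2'$ again sharing cuspidal support; invoke the fact recalled in Section \ref{section on geometric lemma} that $\sigma_i\otimes\pi_2'$ (resp.\ $D(\sigma_i)\otimes\pi_2'$) sits as a submodule of $\ropp_{(n(\sigma_i),n-n(\sigma_i))}(\pi_2)$; and finally apply the Kunneth formula (Theorem \ref{kunneth formula}), Frobenius reciprocity and the induction hypothesis for $(\pi_1/\sigma_i,\pi_2')$ to transport $\Ext$ non-vanishing back to $(\pi_1,\pi_2)$, in exact analogy with the arguments of Lemmas \ref{lemma two about extensions between segment type representations} and \ref{lemma three about extensions between segment type representations}.

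The main obstacle is justifying this peeling step when $\sigma_i$ is a Speh $u_\rho(a,b)$ with both $a,b\geq 2$, i.e., outside the segment type regime already covered by Lemmas \ref{lemma one about extensions between segment type representations}--\ref{lemma three about extensions between segment type representations}. The segment type proofs depend critically on the explicit Jacquet module formulas of Lemma \ref{lemma about jacquet modules of representations of segment type} together with the strict sign dichotomy of Remark \ref{remark on jacquet modules of representations of segment type}, which together force at most two subquotients of $\ropp_{(n,k)}(\pi_2)$ to survive cuspidal support and exponent comparisons. For a general Speh the Jacquet module along a maximal parabolic, given by the Lapid-Minguez formula for ladder representations, decomposes into many more summands such as $u_\rho(a,k)\otimes u_\rho(a,b-k)$, and the strict sign dichotomy breaks down. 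A natural remedy is a double induction, first on $\sum_i(a_i+b_i)$ across the Speh factors of $\pi_1$ (using the highest derivative Lemma \ref{derivative of speh representations} to strip off Arthur $\SL_2$ rows), then on the number of Speh factors, combined with a combinatorial lemma asserting that among the multisets $\{(a_i,b_i)\}$ and $\{(c_j,d_j)\}$ associated to $\pi_1$ and $\pi_2$ one can always find a matching pair, up to swapping $a\leftrightarrow b$. I also expect that Conjecture \ref{conjecture on extensions on same group} and the main branching Conjecture \ref{main conjecture for ext non vanishing} will ultimately need to be proved in tandem, with each inductive step on the same group feeding into the induction for the branching problem and vice versa.
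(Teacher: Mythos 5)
The statement you are proving is stated in the paper only as a conjecture: the paper gives no proof of it, and proves only the special case where both representations are products of unitary segment type representations (the Proposition following the conjecture, obtained by iterating Lemma \ref{lemma on reformulation}). So there is no paper proof to match; the question is whether your argument closes the general case, and it does not. Your forward direction is fine and is essentially the paper's Remark \ref{remark on conjecture on extensions on same group}: non-vanishing of $\Ext$ forces equality of cuspidal supports (via the action of the Bernstein center; note that merely lying in a common Bernstein block only gives inertial equivalence, so you should phrase it through the central characters of the block), and for Arthur type representations equality of cuspidal support is equivalent to $\A(\pi_1)|_{W_F\times \Delta \SL_2}\cong \A(\pi_2)|_{W_F\times \Delta \SL_2}$ by Clebsch--Gordan.

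The backward direction, however, has a genuine gap, and not only the one you flag. Your peeling strategy requires analogues of Lemmas \ref{lemma one about extensions between segment type representations}--\ref{lemma three about extensions between segment type representations} for Speh factors $u_\rho(a,b)$ with $a,b\geq 2$; as you note, the segment type proofs rest on Lemma \ref{lemma about jacquet modules of representations of segment type} and the exponent dichotomy of Remark \ref{remark on jacquet modules of representations of segment type}, which collapse for general ladders, and even when a matching factor is found one still has to control many surviving subquotients of the opposite Jacquet module before a long exact sequence argument can transport non-vanishing (minimal-degree arguments as in Lemma \ref{lemma two about extensions between segment type representations} need the relevant term to sit as a submodule, which is no longer automatic). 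More seriously, the combinatorial lemma you propose as the remedy is false: equality of restricted Arthur parameters does not produce a matching pair of Speh factors up to swapping $a\leftrightarrow b$. For instance, take $\A(\pi_1)=\phi\otimes V_3\otimes V_1\oplus \phi\otimes V_1\otimes V_1$ and $\A(\pi_2)=\phi\otimes V_2\otimes V_2$, i.e. $\pi_1=\delta_\rho(3)\times\rho$ and $\pi_2=u_\rho(2,2)$: both restrict to $\phi\otimes(V_3\oplus V_1)$ on $W_F\times\Delta\SL_2$, yet no factor of $\pi_1$ equals a factor of $\pi_2$ or its Aubert--Zelevinsky dual, so the induction cannot even begin. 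These mixed cases are exactly the open content of the conjecture, and your proposal does not reach them.
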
 

\begin{remark} \label{remark on conjecture on extensions on same group}
Note that as observed earlier, saying that \[ \A(\pi_1)|_{W_F\times \Delta \SL_2} \cong \A(\pi_2)|_{W_F\times \Delta \SL_2} \] is the same as saying that $\pi_1$ and $\pi_2$ have the same cuspidal support.
\end{remark}

By repeated applications of Lemma \ref{lemma on reformulation} it follows that the above conjecture is true in the following special case.

\begin{proposition} 
Suppose $\pi_1$ and $\pi_2$ are irreducible Arthur type representations of $\GL_n(F)$. Moreover, suppose that $\pi_1$ and $\pi_2$ are products of unitary representations of segment type. Then $\Ext^*_{\GL_n(F)}(\pi_1,\pi_2)\neq 0$ if and only if \[ \A(\pi_1)|_{W_F\times \Delta \SL_2} \cong \A(\pi_2)|_{W_F\times \Delta \SL_2}. \] 
\end{proposition}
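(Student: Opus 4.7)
The plan is to prove both directions by induction on $r+s$, where $\pi_1 = \sigma_1 \times \cdots \times \sigma_r$ and $\pi_2 = \tau_1 \times \cdots \times \tau_s$ are the given factorizations into unitary segment-type representations. By Remark \ref{remark on conjecture on extensions on same group}, the parameter equality $\A(\pi_1)|_{W_F \times \Delta\SL_2} \cong \A(\pi_2)|_{W_F \times \Delta\SL_2}$ is the same as the equality $\csupp(\pi_1) = \csupp(\pi_2)$ of multisets of cuspidals, so I shall phrase everything in terms of cuspidal supports. The base case $r=s=1$ reduces to the observation that two unitary segment-type representations have the same cuspidal support if and only if they have the same underlying segment, which by Lemma \ref{lemma about extensions between speh representations on same group} together with Theorem \ref{duality theorem of nori and prasad} is equivalent to $\Ext^*_{\G_n}(\pi_1,\pi_2) \neq 0$.

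For the forward implication, suppose $\Ext^*_{\G_n}(\pi_1,\pi_2)\neq 0$. I apply Lemma \ref{lemma on reformulation} with the segment-type representation $\sigma_1$ in the role of $\pi_1$ and $\tau := \sigma_2 \times \cdots \times \sigma_r$ in the role of $\tau$. This produces a factorization $\pi_2 = \pi_2' \times \pi_2''$ with $\A(\sigma_1)|_{W_F \times \Delta\SL_2} \cong \A(\pi_2')|_{W_F \times \Delta\SL_2}$ and $\Ext^*_{\G_{k}}(\tau,\pi_2'')\neq 0$. A brief cuspidal-support analysis forces $\pi_2'$ to be a single segment-type factor of $\pi_2$: since $\sigma_1$ has a unitary segment $[-m,m]_\rho$, any unitary segment sharing an element with $[-m,m]_\rho$ must itself be based on $\rho$ and contain the central element $\rho$, so two or more factors of $\pi_2'$ based on $\rho$ would force some cuspidal to appear in $\csupp(\pi_2')$ with multiplicity strictly greater than in $\csupp(\sigma_1)$. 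Hence $\pi_2' \in \{\sigma_1, D(\sigma_1)\}$, and $\pi_2''$ is the product of the remaining segment-type factors of $\pi_2$, which by Tadic's irreducibility theorem for products of unitary representations is again irreducible Arthur type with strictly fewer factors. The inductive hypothesis applied to the pair $(\tau,\pi_2'')$ yields $\csupp(\tau) = \csupp(\pi_2'')$, and combining this with $\csupp(\sigma_1) = \csupp(\pi_2')$ gives $\csupp(\pi_1) = \csupp(\pi_2)$.

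For the reverse implication, assume $\csupp(\pi_1) = \csupp(\pi_2)$. Fix a cuspidal $\rho$ appearing in $\csupp(\pi_1)$ and let $\sigma_1$ be a factor of $\pi_1$ based on $\rho$ whose segment $[-m,m]_\rho$ has maximal length among all factors of $\pi_1$ based on $\rho$. The cuspidal representation $\nu^m \rho$ lies in $\csupp(\pi_2)$, so some factor $\tau_j$ of $\pi_2$ contains it in its cuspidal support; writing $\tau_j$'s segment as $[-m', m']_\rho$, we obtain $m' \geq m$. If $m' > m$, then $\nu^{m'}\rho \in \csupp(\pi_2) = \csupp(\pi_1)$ would force a factor of $\pi_1$ based on $\rho$ with segment strictly longer than $[-m,m]_\rho$, contradicting maximality. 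Hence $m' = m$ and $\tau_j \in \{\sigma_1, D(\sigma_1)\}$. Writing $\pi_2 = \tau_j \times \pi_2^{\mathrm{rest}}$, the inductive hypothesis applied to $\tau := \sigma_2 \times \cdots \times \sigma_r$ and $\pi_2^{\mathrm{rest}}$ (both irreducible by Tadic, with matching cuspidal supports) yields $\Ext^*(\tau, \pi_2^{\mathrm{rest}}) \neq 0$. Finally, Lemma \ref{lemma two about extensions between segment type representations} (if $\tau_j = \sigma_1$) or Lemma \ref{lemma three about extensions between segment type representations} (if $\tau_j = D(\sigma_1)$) promotes this to $\Ext^*_{\G_n}(\pi_1,\pi_2) \neq 0$.

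The main obstacle in this plan is the cuspidal-support argument in the forward step showing that $\pi_2'$ must be a single segment-type factor rather than a nontrivial product. This is precisely where the unitarity hypothesis is essential: any two unitary segments based on the same cuspidal automatically overlap at that cuspidal, which rules out the disjoint partitioning of $\csupp(\sigma_1)$ across multiple factors that would be perfectly possible for non-unitary segments. The corresponding max-principle argument in the reverse direction, by contrast, is routine once one insists on selecting $\sigma_1$ of maximal segment length for a fixed cuspidal base.
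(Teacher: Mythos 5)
Your proposal is correct and follows essentially the same route as the paper, which proves the proposition by repeated (inductive) application of Lemma \ref{lemma on reformulation} — equivalently Lemmas \ref{lemma one about extensions between segment type representations}, \ref{lemma two about extensions between segment type representations} and \ref{lemma three about extensions between segment type representations} — peeling off one unitary segment-type factor at a time and matching cuspidal supports. Your extra argument that $\pi_2'$ is a single factor is already built into Lemma \ref{lemma one about extensions between segment type representations} (and the central-element claim needs only the trivial adjustment for segments with half-integral exponents, where the multiplicity argument applies to $\nu^{\pm 1/2}\rho$ instead of $\rho$), so the proof stands as essentially the paper's.
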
 

\subsubsection{An Example} The Conjecture \ref{conjecture on extensions on same group} and Remark \ref{remark on conjecture on extensions on same group} suggests the following more general question. Suppose that $\pi_1$ and $\pi_2$ are irreducible representation of $\GL_n(F)$ having the same cuspidal support. Then is it true that, \[ \Ext^i_{\GL_n(F)}(\pi_1,\pi_2)\neq 0 \] for some integer $i\geq 0$? The answer to the above question is no in general. There exist irreducible representations $\pi_1$ and $\pi_2$ of $\GL_n(F)$ having the same cuspidal support for which $\Ext^*_{\GL_n(F)}(\pi_1,\pi_2) = 0$. We owe the following example to Professor K.Y. Chan. 

Consider the representations $\pi_1$ and $\pi_2$ of $\GL(4)$ defined as follows. \[ \pi_1 = Q([2,3],[2],[1]) \text{ and } \pi_2 = Q([1,3],[2]).\] We claim that for all integers $i\geq 0$, \[ \Ext^i_{\GL_4(F)}(\pi_1,\pi_2)= 0. \] It is not difficult to see that $\pi_1$ sits in the following short exact sequence, \begin{equation} \label{equation one for subsubsection an example}
 0 \rightarrow Q([2,3]) \times Q([1,2]) \rightarrow Q([2,3]) \times Q([2]) \times Q([1]) \rightarrow \pi_1 \rightarrow 0.   
\end{equation} By Frobenius reciprocity we have that, \[ \Ext^i_{\GL(4)}(Q([2,3]) \times Q([1,2]),\pi_2) = \Ext^i_{\GL(2) \times \GL(2)}(Q([2,3])\otimes Q([1,2]),\Bar{r}_{(2,2)}(\pi_2)). \] Here, $\Bar{r}_{(2,2)}(\pi_2)$ denotes the opposite Jacquet module of $\pi_2$ with respect to the the opposite parabolic corresponding to the partition $(2,2)$. 

Let $\Omega = \omega_1 \otimes \omega_2$ be any irreducible subquotient of $\Bar{r}_{(2,2)}(\pi_2)$. By the geometric lemma we conclude that $\nu$ always belongs to the cuspidal support of $\omega_1$. Hence using Kunneth formula and comparing cuspidal supports we obtain that, \[ \Ext^i_{\GL(2) \times \GL(2)}(Q([2,3]\otimes Q[1,2]),\Bar{r}_{(2,2)}(\pi_2)) = 0. \] Hence we conclude that for all integers $i\geq 0$, \begin{equation} \label{equation two for subsubsection an example} \Ext^i_{\GL_4(F)}(Q([2,3]) \times Q([1,2]),\pi_2) = 0. \end{equation} Similarly we can argue that \begin{equation} \label{equation three for subsubsection an example} \Ext^i_{\GL_4(F)}(Q([2,3]) \times Q([2]) \times Q([1]),\pi_2) = 0 \end{equation} for all integers $i\geq 0$. 

Then from Equations \ref{equation one for subsubsection an example}, \ref{equation two for subsubsection an example} and \ref{equation three for subsubsection an example} we conclude that,  \[ \Ext^i_{\GL_4(F)}(\pi_1,\pi_2)= 0 \] for all integers $i\geq 0$.

\section{Main Reduction Lemma} \label{section six}

The following lemma is Lemma 4.3 from \cite{ch22} where it is stated for $\Hom$ spaces but which works as well for $\Ext$ modules as we assert below. This lemma follows from an application of the filtration stated in Theorem \ref{filtration for parabolically induced modules}. It allows us to reduce the study of $\Ext$ branching to the bottom piece of the aforementioned filtration. Since this lemma plays an important role in our arguments we recall its proof for the sake of completeness.

\begin{lemma} \label{lemma for the key reduction step}
Let $\pi_1$ and $\pi_2$ be Arthur type representations of $\G_n$ and $\G_{n-1}$ respectively. Suppose that \[ \pi_1= u_{\rho_1}(a_1,b_1)\times u_{\rho_2}(a_2,b_2)\times \cdots \times u_{\rho_r}(a_r,b_r)\] and \[ \pi_2 = u_{\tau_1}(c_1,d_1)\times u_{\tau_2}(c_2,d_2)\times \cdots \times u_{\tau_s}(c_s,d_s)\] where $\rho_i$ ($i=1,2,\cdots,r$) and $\tau_j$ ($j=1,2,\cdots,s$) are unitary cuspidal representations. Further suppose that $a_1+b_1 \geq a_i + b_i$ and $a_1+b_1 \geq c_j + d_j$  for all $i=1,2,\cdots,r$ and $j=1,2,\cdots,s$. Let $\sigma$ be a unitary cuspidal representation of $\G_{a_1n(\rho_1)}$ such that $\nu^{1/2}\sigma\not\in \csupp_{\mathbb{Z}}(\pi_2)$ and $\sigma\not\in \csupp_{\mathbb{Z}}(\pi_1)$. Then for all integers $i\geq 0$ we have, \[ \Ext^i_{\G_{n-1}}(\pi_1,\pi_2) \cong \Ext^i_{\G_{n-1}} (u_{\rho_1}(a_1,b_1-1)\times (\sigma \times \pi_{1}^{\prime})|_{\G_{a}},\pi_2)  \] where, $a=n - n(\rho_1)a_1(b_1-1)-1$ and $\pi_{1}^{\prime} = u_{\rho_2}(a_2,b_2)\times \cdots \times u_{\rho_r}(a_r,b_r)$.

\end{lemma}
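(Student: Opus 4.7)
The plan is to prove the claimed isomorphism in two stages: first, use the filtration of Theorem~\ref{filtration for parabolically induced modules} to reduce $\pi_1|_{\G_{n-1}}$ to its bottom graded piece; second, use the replacement lemma (Lemma~\ref{replacement lemma}) to convert the mirabolic induction into the product with $\sigma$. For the first stage, I would factor $\pi_1$ as $u_{\rho_1}(a_1,b_1) \times \pi_1'$ and apply Theorem~\ref{filtration for parabolically induced modules} to obtain a filtration of $\pi_1|_{\G_{n-1}}$ whose graded pieces are
\[ V_0/V_1 \cong \nu^{1/2} u_{\rho_1}(a_1,b_1) \times \pi_1'|_{\G_{n(\pi_1')-1}} \]
and, for $1 \leq k \leq \lev(u_{\rho_1}(a_1,b_1))$,
\[ V_k/V_{k+1} \cong \nu^{1/2} u_{\rho_1}(a_1,b_1)^{(k)} \times (\Pi_k \btimes \pi_1')|_{\G_{n(\pi_1')+k-1}}. \]
By Lemma~\ref{derivative of speh representations}, the derivative vanishes for $k > \lev = n(\rho_1)a_1$, so the filtration has finitely many non-trivial pieces.

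The key claim is that only the bottom piece ($k = \lev$) contributes to $\Ext^*_{\G_{n-1}}(\cdot, \pi_2)$. By the hypothesis $a_1+b_1 \geq a_i+b_i$ and $a_1+b_1 \geq c_j+d_j$, one checks directly (using the description of the cuspidal support of a Speh representation $u_\tau(c,d)$ via shifted Steinberg building blocks) that the largest exponent on $\rho_1$ occurring in $\csupp(\pi_2)$ is at most $(a_1+b_1-2)/2$, so $\nu^{(a_1+b_1-1)/2}\rho_1 \notin \csupp(\pi_2)$. On the other hand, for $0 \leq k < \lev$, Lemma~\ref{derivative of speh representations}(1) places $\nu^{(a_1+b_1-2)/2}\rho_1$ in the cuspidal support of every irreducible subquotient of $u_{\rho_1}(a_1,b_1)^{(k)}$ (and of $u_{\rho_1}(a_1,b_1)$ itself at $k=0$), so after the $\nu^{1/2}$-twist the element $\nu^{(a_1+b_1-1)/2}\rho_1$ appears. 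Passing to a composition series of the left factor and invoking Lemma~\ref{lemma about standard argument on ext vanishing} on each irreducible summand against $\pi_2$ then yields $\Ext^*_{\G_{n-1}}(V_k/V_{k+1}, \pi_2) = 0$ for every $k < \lev$. Running the long exact sequence for each inclusion $V_{k+1} \hookrightarrow V_k$ gives
\[ \Ext^i_{\G_{n-1}}(\pi_1, \pi_2) \cong \Ext^i_{\G_{n-1}}(V_\lev / V_{\lev+1}, \pi_2). \]

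For the second stage, Lemma~\ref{derivative of speh representations}(2) identifies
\[ V_\lev / V_{\lev+1} \cong u_{\rho_1}(a_1, b_1-1) \times (\Pi_{n(\rho_1)a_1} \btimes \pi_1')|_{\G_a}, \]
since the $\nu^{1/2}$-twist cancels the $\nu^{-1/2}$ appearing in the top derivative. I would then apply Lemma~\ref{replacement lemma} with its $\pi_1$ taken to be $u_{\rho_1}(a_1, b_1-1)$, its $\pi_2$ taken to be our $\pi_1'$, its $\pi_3$ taken to be our $\pi_2$, and $a = n(\rho_1)a_1$; the numerical condition $a = (n{-}1)+1 - (n(\rho_1)a_1(b_1-1) + n(\pi_1'))$ checks out, and the hypothesis $\sigma \notin \csuppline(\nu^{-1/2}\pi_2)$ needed by Lemma~\ref{replacement lemma} is equivalent to the given $\nu^{1/2}\sigma \notin \csuppline(\pi_2)$. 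This replaces $\Pi_{n(\rho_1)a_1} \btimes \pi_1'$ by $\sigma \times \pi_1'$, yielding the desired isomorphism.

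The main obstacle is the cuspidal support vanishing for the intermediate filtration pieces. One must be careful that the $\nu^{1/2}$-twist genuinely shifts the maximal exponent $(a_1+b_1-2)/2$ forced by Lemma~\ref{derivative of speh representations}(1) into $(a_1+b_1-1)/2$, which strictly exceeds whatever is available on the $\rho_1$-line of $\csupp(\pi_2)$ under the maximality assumption; and one must pass to a composition series of the derivative factor (which need not be irreducible), invoking Lemma~\ref{lemma about standard argument on ext vanishing} uniformly across all irreducible subquotients before patching together via long exact sequences. The extra hypothesis $\sigma \notin \csuppline(\pi_1)$ is not used in the reduction itself and is preserved only for downstream applications.
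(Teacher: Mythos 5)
Your proposal is correct and follows essentially the same route as the paper's proof: the coarse filtration of Theorem \ref{filtration for parabolically induced modules}, vanishing of every piece except the bottom one by locating $\nu^{(a_1+b_1-1)/2}\rho_1$ in the cuspidal support of the twisted derivative factor but not in $\csupp(\pi_2)$ (via Lemma \ref{derivative of speh representations}(1) and Lemma \ref{lemma about standard argument on ext vanishing}), a long exact sequence to isolate the bottom piece, its identification through the highest derivative, and finally Lemma \ref{replacement lemma}. Your additional remarks (passing to a composition series of the possibly reducible derivative, and noting that $\sigma\not\in\csuppline(\pi_1)$ is not actually needed for the replacement step, only $\nu^{1/2}\sigma\not\in\csuppline(\pi_2)$) are consistent with, and if anything slightly more careful than, the paper's own write-up.
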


\begin{proof}

By Theorem \ref{filtration for parabolically induced modules} we have that $\pi_1|_{\G_{n-1}}$ is glued together from the pieces, 

\[ V_0/V_1 \cong \nu^{1/2}u_{\rho_1}(a_1,b_1)\times\pi_{1}^{\prime}|_{\G_{n(\pi_{1}^{\prime})-1}},\] for $1\leq k\leq d$, 
\[ V_k/V_{k+1} \cong \nu^{1/2}u_{\rho_1}(a_1,b_1)^{(k)}\times (\Pi_k \btimes \pi_{1}^{\prime})|_{\G_{n(\pi_{1}^{\prime}) + k -1}}.\] Here $d=\lev(u_{\rho_1}(a_1,b_1))$ is the largest integer $\ell$ such that $u_{\rho_1}(a_1,b_1)^{(\ell)}\neq 0$. By Lemma \ref{derivative of speh representations}, we know that $d=n(\rho_1)a_1$.

\textbf{Claim:} We claim that among all the pieces in the above filtration the pieces other than the last piece do not contribute to the $\Ext$ module, that is for all integers $i\geq 0$, \[   \Ext^i_{\G_{n-1}} (V_k/V_{k+1},\pi_2) = 0 \] for all $0\leq k < d$. 

\textbf{Proof of the Claim:} The proof of the claim is based on a simple observation about the cuspidal support of  derivatives of Speh representations. If $S = u_{\rho}(m,n)$ is a Speh representation then $\nu^{(m+n-2)/2}\rho \in \csupp(S^{(k)})$ provided $k < \lev(S)$, that is if $S^{(k)}$ is not the highest derivative of $S$ (Lemma \ref{derivative of speh representations}). Suppose that $k< d = \lev(u_{\rho_1}(a_1,b_1))$. Now by definition \[  \Ext^i_{\G_{n-1}} (V_k/V_{k+1},\pi_2) = \Ext^i_{\G_{n-1}} (\nu^{1/2}u_{\rho_1}(a_1,b_1)^{(k)} \times N_k ,\pi_2) \] where $N_k = (\Pi_k \btimes \pi_{1}^{\prime})|_{\G_{n(\pi_{1}^{\prime}) + k -1}}$. The inequalities $a_1+b_1 \geq a_i + b_i$ and $a_1+b_1 \geq c_j + d_j$ imply that $1/2 + (a_1 + b_1 - 2)/2$ is \textit{strictly} greater than any element of the set, 
\[ S = \{  (a_i + b_i - 2)/2 ,  (c_j + d_j - 2)/2 | i=1,2,\cdots,r ; j=1,2,\cdots,s \}. \] So if $k < \lev(u_{\rho_1}(a_1,b_1))$ we conclude that $\nu^{1/2  + (a_1 + b_1 - 2)/2}\rho_1$ lies in $ \csupp(u_{\rho_1}(a_1,b_1)^{(k)})$ but does not lie in the cuspidal support of $\pi_2$. Therefore by second adjointness and comparing cuspidal supports at $\nu^{1/2  + (a_1 + b_1 - 2)/2}\rho_1$ (see Lemma \ref{lemma about standard argument on ext vanishing}) we conclude that, \[   \Ext^i_{\G_{n-1}} (V_k/V_{k+1},\pi_2) = 0 \] for all $0\leq k < d$. This completes the proof of the claim.

Since the claim is true, a long exact sequence argument implies that for all integers $i\geq 0$, \begin{align*}
\Ext^i_{\G_{n-1}}(\pi_1,\pi_2) &= \Ext^i_{\G_{n-1}}(V_d/V_{d+1},\pi_2) \\ &= \Ext^i_{\G_{n-1}} (u_{\rho_1}(a_1,b_1)^{-}\times (\Pi_{d} \btimes \pi_{1}^{\prime})|_{\G_a},\pi_2).    
\end{align*} Here, $u_{\rho_1}(a_1,b_1)^{-}=\nu^{1/2}u_{\rho_1}(a_1,b_1)^{(d)} = u_{\rho_1}(a_1,b_1-1)$. Since $\nu^{1/2}\sigma\not\in \csupp_{\mathbb{Z}}(\pi_2)$ and $\sigma\not\in \csupp_{\mathbb{Z}}(\pi_1)$ by Lemma \ref{replacement lemma} it follows that,
\begin{equation*}
 \Ext^i_{\G_{n-1}}(\pi_1,\pi_2) = \Ext^i_{\G_{n-1}} (u_{\rho_1}(a_1,b_1) \times (\sigma \times \pi_{1}^{\prime})|_{\G_{a}},\pi_2). \qedhere \end{equation*}\end{proof}

\section{Proof of one direction of Theorem \ref{main theorem for segment type representations}}  \label{section 7 on proof of one direction}

In this Section we shall prove one direction of Theorem \ref{main theorem for segment type representations}. More precisely, we prove the following.

\begin{theorem} \label{one direction of main theorem}
Let $n\geq 1$ be an integer. Let $\pi_1$ and $\pi_2$ be irreducible Arthur type representations of $\GL_n(F)$ and $\GL_{n-1}(F)$ respectively. Suppose that $\pi_1$ and $\pi_2$ are products of discrete series representations and the Aubert-Zelevinsky duals of discrete series representations. If $\Ext^i_{\GL_{n-1}(F)}(\pi_1,\pi_2)\neq 0 $ for some integer $i\geq 0$, then $\pi_1$ and $\pi_2$ are strong $\Ext$ relevant.
\end{theorem}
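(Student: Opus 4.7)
The plan is to argue by a well-founded induction, for instance on the total number of Speh factors of $\pi_1$ and $\pi_2$ with ties broken by the value of $\max(a+b)$ over all factors. The base case is when every factor is cuspidal, so both $\pi_1$ and $\pi_2$ are generic; strong $\Ext$ relevance then holds trivially (every cuspidal factor of $\pi_1$ is an unmatched type-(1) summand with $c_i=a_i=1$ and similarly for $\pi_2$) while Theorem \ref{generic branching law} furnishes the non-vanishing. For the inductive step, I would first use the transfer lemma (Lemma \ref{transfer lemma}) to guarantee that the Speh factor achieving the largest $a+b$ lies inside $\pi_1$: if the maximum is attained only by some factor of $\pi_2$, the transfer lemma swaps us to a branching problem in which this factor now sits in the first argument (possibly after Aubert-Zelevinsky duality). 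Let $u_{\rho_1}(a_1,b_1)$ denote such a maximal factor of $\pi_1$.

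Applying Lemma \ref{lemma for the key reduction step} to $u_{\rho_1}(a_1,b_1)$ produces the isomorphism
\[
\Ext^i_{\G_{n-1}}(\pi_1,\pi_2) \cong \Ext^i_{\G_{n-1}}\bigl(u_{\rho_1}(a_1,b_1-1)\times (\sigma\times\pi_1')|_{\G_a},\,\pi_2\bigr),
\]
where $\pi_1'$ collects the remaining segment type factors of $\pi_1$ and $\sigma$ is an auxiliary cuspidal chosen outside $\csuppline(\pi_1)\cup\csuppline(\pi_2)$. When $b_1 \ge 2$, the factor $u_{\rho_1}(a_1,b_1-1)$ is still a non-trivial unitary segment type representation, so I would next invoke Lemma \ref{lemma on reformulation}: the hypothesised non-vanishing forces a factorisation $\pi_2 = \pi_2'\times\pi_2''$ with $\pi_2'$ a unitary segment type representation satisfying $\A(\pi_2')|_{W_F\times\Delta\SL_2}\cong \A(u_{\rho_1}(a_1,b_1-1))|_{W_F\times\Delta\SL_2}$ together with $\Ext^*_{\G_a}\bigl((\sigma\times\pi_1')|_{\G_a},\pi_2''\bigr)\neq 0$. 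The rigidity of segment type representations under the diagonal restriction pins $\pi_2'$ down to either $u_{\rho_1}(a_1,b_1-1)$ itself or its Aubert-Zelevinsky dual $u_{\rho_1}(b_1-1,a_1)$, which realises respectively a matching of type (1) or type (4) in the enumeration of Remark 1.3.

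The residual non-vanishing is now a branching problem for the pair $(\G_{a+1},\G_a)$, whose first argument $\sigma\times\pi_1'$ is again a product of segment type Speh representations; by the inductive hypothesis, $(\sigma\times\pi_1',\pi_2'')$ is strong $\Ext$ relevant. Because $\sigma$ has cuspidal support disjoint from both original representations, it can only enter this relevance as an unmatched type-(1) summand with $c=a=1$, and I would then formally replace $\sigma$ by the original factor $u_{\rho_1}(a_1,b_1)$ (whose partner $\pi_2'$ was already identified in the previous step) to assemble strong $\Ext$ relevance of $(\pi_1,\pi_2)$. The edge case $b_1 = 1$ is handled analogously: Lemma \ref{lemma for the key reduction step} then reduces directly to $\Ext^i_{\G_{n-1}}((\sigma\times\pi_1')|_{\G_{n-1}},\pi_2)$, so that induction applies and $\delta_{\rho_1}(a_1)$ enters the final relevance relation as an unmatched type-(1) summand.

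The main obstacle I anticipate is the careful bookkeeping needed to recognise every output of Lemma \ref{lemma on reformulation} as one of the four matching shapes permitted by Definition \ref{definition of strong ext relevance}, particularly to control the interaction between the Aubert-Zelevinsky dualised match ($\pi_2' = u_{\rho_1}(b_1-1,a_1)$) and the transfer-lemma manoeuvre that may already have dualised one side; a secondary difficulty is verifying that the chosen lexicographic induction parameter strictly decreases across every case branch, including the $b_1 = 1$ branch where the total degree $n$ is unchanged but $\max(a+b)$ drops by one.
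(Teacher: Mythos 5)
Your proposal follows essentially the same route as the paper: reduce via Lemma \ref{lemma for the key reduction step} to the bottom piece of the coarse Bernstein--Zelevinsky filtration (with an auxiliary cuspidal $\sigma$ inserted by the replacement lemma), use the segment-type extension lemma (Lemma \ref{lemma one about extensions between segment type representations}, equivalently Lemma \ref{lemma on reformulation}) to split off a factor of $\pi_2$ equal to $\pi_{1,1}^{-}$ or $D(\pi_{1,1}^{-})$, apply induction to the residual pair $(\sigma\times\pi_1',\pi_2'')$, and invoke the transfer lemma when the maximal factor sits in $\pi_2$. Two small slips: the dualized match $u_{\rho_1}(b_1-1,a_1)$ realizes case (3), not case (4), of the eight-case enumeration in the introduction; and Lemma \ref{transfer lemma} produces smooth contragredients, not Aubert--Zelevinsky duals --- harmless here, since the contragredient of a product of unitary segment type representations is again such a product and strong $\Ext$ relevance of $(\pi_1,\pi_2)$ is equivalent to that of $(\pi_2^{\vee}\times\tilde\sigma,\pi_1^{\vee})$, but it should be said correctly.

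The one genuine defect is your induction parameter. The lexicographic pair (total number of Speh factors, $\max(a+b)$) does not strictly decrease along every branch: the transfer step replaces $(\pi_1,\pi_2)$ by $(\pi_2^{\vee}\times\tilde\sigma,\pi_1^{\vee})$, which has one more Speh factor and the same maximum, and after the subsequent reduction the factor count only returns to its original value while $\max(a+b)$ need not drop; likewise in the $b_1=1$ branch the count is unchanged and $\max(a+b)$ drops only if the maximum was attained by a unique factor, contrary to your claim that it ``drops by one''. The paper avoids this by inducting on $m(\pi_1,\pi_2)$, the number of non-cuspidal factors: the auxiliary representations $\sigma$, $\tilde\sigma$ inserted by the transfer and replacement lemmas are cuspidal, so they never increase $m$, while each pass through Lemma \ref{lemma for the key reduction step} destroys the maximal factor, which is non-cuspidal unless all factors are cuspidal (the generic base case); hence $m$ strictly decreases. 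Replacing your parameter by $m(\pi_1,\pi_2)$ (or by $\sum(a+b-2)$ taken over all factors of both sides) repairs the argument without changing anything else in your outline.
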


\begin{proof}

Suppose $\pi_1= \pi_{1,1}\times \pi_{1,2}\times \cdots \times \pi_{1,r}$ and $\pi_2= \pi_{2,1}\times \pi_{2,2}\times \cdots \times \pi_{2,s}$. Let us denote $\pi_{1,i}=u_{\rho_i}(a_i,b_i)$ and $\pi_{2,j}=u_{\tau_j}(c_i,d_i)$ where the $\rho_i$ ($i=1,2,\cdots,r$) and $\tau_j$ ($j=1,2,\cdots,s$) are some cuspidal representations. We are given that $\pi_{1,1},\pi_{1,2},\cdots,\pi_{1,r}$ and $\pi_{2,1},\pi_{2,2},\cdots,\pi_{2,s}$ are all of segment type. This means that for all $i=1,2,\cdots,r$ either $a_i=1$ or $b_i=1$. Similarly, for all $j=1,2,\cdots,s$ either $c_j=1$ or $d_j=1$. We know that $\Ext^i_{\G_{n-1}}(\pi_1,\pi_2)\neq 0$ for some integer $i\geq 0$. We want to show that the pair $(\pi_1,\pi_2)$ is strong $\Ext$ relevant. 

Let $m(\pi_1,\pi_2)$ denote the number of non-cuspidal factors of $\pi_1$ and $\pi_2$. The proof of the Theorem is via induction on $m(\pi_1,\pi_2)$. When $m(\pi_1,\pi_2)=0$ then both $\pi_1$ and $\pi_2$ are generic and hence automatically strong $\Ext$ relevant. We now assume that Theorem \ref{one direction of main theorem} holds for all integers less than $m(\pi_1,\pi_2)$.

Since $\pi_1$ and $\pi_2$ are irreducible we can permute their factors and ensure that one of the following cases occurs. We can ensure that either:
\begin{enumerate}
    \item $a_1+b_1\geq a_i+b_i$ and $a_1+b_1\geq c_j+d_j$ for all $i$ and $j$.
    \item $c_1+d_1\geq c_j+d_j$ and $c_1+d_1\geq a_i+b_i$ for all $i$ and $j$.
\end{enumerate}
We deal with the above two cases one by one.

\textbf{Case 1:} Let us suppose that $a_1+b_1\geq a_i+b_i$ and $a_1+b_1\geq c_j+d_j$ for all $i$ and $j$. Choose a unitary cuspidal representation $\sigma$ of $\G_2$ such that $\nu^{1/2}\sigma\not \in \csuppline(\pi_2)$ and $\sigma\not \in \csuppline(\pi_1)$. 

Since $\Ext^i_{\G_{n-1}}(\pi_1,\pi_2)\neq 0$ by Lemma \ref{lemma for the key reduction step} we have that,\[ \Ext^i_{\G_{n-1}} (u_{\rho_1}(a_1,b_1-1)\times (\sigma \times \pi_{1}^{\prime})|_{\G_{a}},\pi_2)\neq 0  \] where, $a=n - n(\rho_1)a_1(b_1-1)-1$ and $\pi_{1}^{\prime} = u_{\rho_2}(a_2,b_2)\times \cdots \times u_{\rho_r}(a_r,b_r)$. Note that $\pi_{1,1}^{-} = u_{\rho_1}(a_1,b_1)^- = u_{\rho_1}(a_1,b_1-1)$ is also of segment type.

We now invoke Lemma \ref{lemma one about extensions between segment type representations} and suitably rearrange the factors of $\pi_2$ to conclude the following.
\begin{enumerate}
    \item $\pi_{2,1}$ is equal to either $\pi_{1,1}^-$ or $D(\pi_{1,1}^-)$. Therefore, the pair $(\pi_{1,1},\pi_{2,1})$ is strong $\Ext$ relevant.
    \item We have that $\Ext^i_{\G_{a}} (\sigma \times \pi_{1}^{\prime},\pi_{2,2}\times \cdots \pi_{2,s})\neq 0$. So by the induction hypothesis the pair $(\sigma \times \pi_{1}^{\prime},\pi_{2,2}\times \cdots \pi_{2,s})$ is strong $\Ext$ relevant. Since $\nu^{1/2}\sigma\not \in \csuppline(\pi_2)$, the pair $(\pi_{1}^{\prime},\pi_{2,2}\times \cdots \pi_{2,s})$ is strong $\Ext$ relevant. 
\end{enumerate}

From the above two observations we conclude that the pair $\pi_1$ and $\pi_2$ are strong $\Ext$ relevant.

\textbf{Case 2:} Let us suppose that $c_1+d_1\geq c_j+d_j$ and $a_1+b_1\geq c_i+d_i$ for all $i$ and $j$. Then we can apply the transfer lemma (Lemma \ref{transfer lemma}) and deal with the pair $(\pi_2^{\vee}\times \tilde{\sigma},\pi_1^{\vee})$ rather than the pair $(\pi_1,\pi_2)$ where, $\tilde{\sigma}$ is some cuspidal representation of $\G_2$ chosen such that $\nu^{1/2}\tilde{\sigma} \not \in \csuppline(\pi_1^{\vee})$ and $\tilde{\sigma} \not \in \csuppline(\pi_2^{\vee})$. Since, $\tilde{\sigma} \not \in \csuppline(\nu^{-1/2}\pi_1^{\vee})$ observe that the pair $(\pi_1,\pi_2)$ is strong $\Ext$ relevant if and only if the pair $(\pi_2^{\vee} \times \tilde{\sigma},\pi_1^{\vee})$ is strong $\Ext$ relevant. When dealing with the pair $(\pi_2^{\vee} \times \tilde{\sigma},\pi_1^{\vee})$ the role of $\pi_{1,1}$ is replaced by $\pi_{2,1}$. Hence it is enough to deal with Case 1.
\end{proof}

\section{Proof of other direction of Theorem \ref{main theorem for segment type representations}}  \label{section 8 on proof of other direction}

We now prove the other direction of Theorem \ref{main theorem for segment type representations}. 

\begin{theorem} \label{other direction of main theorem}
Let $\pi_1$ and $\pi_2$ be irreducible Arthur type representations of $\GL_n(F)$ and $\GL_{n-1}(F)$ respectively. Suppose that $\pi_1$ and $\pi_2$ are products of discrete series representations and the Aubert-Zelevinsky duals of discrete series representations. If $\pi_1$ and $\pi_2$ are strong $\Ext$ relevant then, $\Ext^i_{\GL_{n-1}(F)}(\pi_1,\pi_2)\neq 0 $ for some integer $i\geq 0$. 
\end{theorem}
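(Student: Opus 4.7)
The plan is to run the induction used in the proof of Theorem \ref{one direction of main theorem} in reverse, building nonzero $\Ext$ groups inductively out of the generic case. Induct on the number $m(\pi_1,\pi_2)$ of non-cuspidal factors appearing in $\pi_1$ and $\pi_2$ together. When $m(\pi_1,\pi_2) = 0$ both $\pi_1$ and $\pi_2$ are irreducible and generic, so $\Hom_{\G_{n-1}}(\pi_1,\pi_2) \neq 0$ by Theorem \ref{generic branching law}; note that any such pair is automatically strong $\Ext$ relevant, since each cuspidal factor of $\pi_1$ can be recorded as an unmatched Type 1 summand (with $a_i = c_i = 1$) and each cuspidal factor of $\pi_2$ as an unmatched Type 2 summand (with $b_i = c_i = 1$) in Definition \ref{definition of strong ext relevance}.

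For the inductive step, pick the non-cuspidal factor with largest value of $a+b$. After possibly invoking the transfer lemma (Lemma \ref{transfer lemma}) exactly as in Case 2 of the proof of Theorem \ref{one direction of main theorem}, we may assume this factor lies in $\pi_1$; call it $\pi_{1,1} = u_{\rho_1}(a_1,b_1)$. Then Lemma \ref{lemma for the key reduction step} produces a cuspidal $\sigma$ lying outside the cuspidal lines of $\pi_1$ and $\pi_2$ and an isomorphism
\[ \Ext^i_{\G_{n-1}}(\pi_1,\pi_2) \cong \Ext^i_{\G_{n-1}}\bigl(\pi_{1,1}^- \times (\sigma \times \pi_1')|_{\G_a},\pi_2\bigr), \]
where $\pi_1' = \pi_1/\pi_{1,1}$. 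The maximality of $a_1+b_1$ forbids $\pi_{1,1}$ from playing the role of the smaller partner in a Type 2 or Type 4 pairing of the strong $\Ext$ relevance decomposition, so $\pi_{1,1}$ is either matched in Type 1 or Type 3 (with partner $\pi_{2,k} \in \{\pi_{1,1}^-,\,D(\pi_{1,1}^-)\}$) or is unmatched, in which case the format of the four sums forces $\pi_{1,1}$ to be a discrete series $u_{\rho_1}(a_1,1)$.

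In the matched case, write $\pi_2 = \pi_{2,k} \times \pi_2'$. The smaller pair $(\sigma \times \pi_1',\pi_2')$ is again strong $\Ext$ relevant (delete the matched pair $(\pi_{1,1},\pi_{2,k})$ and append $\sigma$ as an unmatched Type 1 summand on the $\pi_1$ side), and it has strictly fewer non-cuspidal factors, so the induction hypothesis yields $\Ext^*(\sigma \times \pi_1',\pi_2') \neq 0$. An application of Lemma \ref{lemma two about extensions between segment type representations} when $\pi_{2,k} = \pi_{1,1}^-$, or of Lemma \ref{lemma three about extensions between segment type representations} when $\pi_{2,k} = D(\pi_{1,1}^-)$, then lifts this to $\Ext^*_{\G_{n-1}}(\pi_{1,1}^- \times (\sigma \times \pi_1'),\pi_2) \neq 0$, completing the step via the reduction isomorphism above. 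In the unmatched case, $\pi_{1,1}^-$ is the trivial representation of $\G_0$ and the reduction formula collapses to $\Ext^i(\pi_1,\pi_2) \cong \Ext^i((\sigma \times \pi_1')|_{\G_{n-1}},\pi_2)$; the pair $(\sigma \times \pi_1',\pi_2)$ is strong $\Ext$ relevant (remove the unmatched $\pi_{1,1}$ and add $\sigma$ as an unmatched Type 1 summand), with fewer non-cuspidal factors, so the induction hypothesis closes the argument.

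The main obstacle is not any single analytic difficulty but the combinatorial bookkeeping: one must check that removing the matched pair (or the lone unmatched factor) and inserting the spacer cuspidal $\sigma$ really produces a valid strong $\Ext$ relevance decomposition of the smaller pair, and that the maximality constraint on $a_1+b_1$ really does exclude Types 2 and 4 as possible matchings of $\pi_{1,1}$. Once this bookkeeping is set up correctly, the proof parallels Theorem \ref{one direction of main theorem} step-for-step, with Lemmas \ref{lemma two about extensions between segment type representations} and \ref{lemma three about extensions between segment type representations} playing the role reserved for Lemma \ref{lemma one about extensions between segment type representations} in the forward direction.
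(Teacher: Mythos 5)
Your proposal is correct and follows essentially the same route as the paper's proof: induction on the number of non-cuspidal factors, the transfer lemma to place the maximal factor in $\pi_1$, the reduction via Lemma \ref{lemma for the key reduction step}, and then Lemmas \ref{lemma two about extensions between segment type representations} and \ref{lemma three about extensions between segment type representations} to lift the inductive non-vanishing back to the pair $(\pi_1,\pi_2)$. The only difference is that you explicitly treat the case where the maximal factor is an unmatched discrete series (so that $\pi_{1,1}^-$ is the trivial representation of $\G_0$ and the reduction collapses), a degenerate possibility that the paper's dichotomy (A)/(B) covers only implicitly; your handling of it is correct.
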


\begin{proof}
We set up notations as in the previous section. Suppose $\pi_1= \pi_{1,1}\times \pi_{1,2}\times \cdots \times \pi_{1,r}$ and $\pi_2= \pi_{2,1}\times \pi_{2,2}\times \cdots \times \pi_{2,s}$, where $\pi_{1,1},\pi_{1,2},\cdots,\pi_{1,r}$ and $\pi_{2,1},\pi_{2,2},\cdots,\pi_{2,s}$ are unitary representations of segment type. Let us denote $\pi_{1,i}=u_{\rho_i}(a_i,b_i)$ and $\pi_{2,j}=u_{\tau_j}(c_i,d_i)$ where the $\rho_i$ ($i=1,2,\cdots,r$) and $\tau_j$ ($j=1,2,\cdots,s$) are cuspidal representations.  

We are given that $\pi_1$ and $\pi_2$ are strong $\Ext$ relevant. Let $m(\pi_1,\pi_2)$ denote the number of non-cuspidal factors of $\pi_1$ and $\pi_2$. As before, the proof of the Theorem is via induction on $m(\pi_1,\pi_2)$. 
When $m(\pi_1,\pi_2)=0$ then both $\pi_1$ and $\pi_2$ are generic and by Theorem \ref{generic branching law} we conclude that $\Hom_{\G_{n-1}}(\pi_1,\pi_2)\neq 0$. We now assume that the Theorem \ref{other direction of main theorem} holds for all integers less than $m(\pi_1,\pi_2)$.

Arguing similarly as in Case 2 of the proof of Theorem \ref{one direction of main theorem}, it is enough to deal with the case that $a_1+b_1\geq a_i+b_i$ and $a_1+b_1\geq c_j+d_j$ for all $i$ and $j$. Choose a unitary cuspidal representation $\sigma$ of $\G_2$ such that $\nu^{1/2}\sigma\not \in \csuppline(\pi_2)$ and $\sigma\not \in \csuppline(\pi_1)$. Then by Lemma \ref{lemma for the key reduction step} we have that for all integers $i\geq 0$, \begin{equation} \label{equation one in proof of if direction}
\Ext^i_{\G_{n-1}}(\pi_1,\pi_2) = \Ext^i_{\G_{n-1}} (\pi_{1,1}^{-}\times (\sigma \times \pi_{1}^{\prime})|_{\G_{a}},\pi_2)  \end{equation} where, $a=n - n(\rho_1)a_1(b_1-1)-1$ and $\pi_{1}^{\prime} = u_{\rho_2}(a_2,b_2)\times \cdots \times u_{\rho_r}(a_r,b_r)$. Here $\pi_{1,1}^{-} = u_{\rho_1}(a_1,b_1)^- = u_{\rho_1}(a_1,b_1-1)$.

Now since the representations $\pi_1$ and $\pi_2$ are strong $\Ext$ relevant one of the following two cases must occur. \begin{enumerate}
    \item [(A)] $\pi_{2,m} = \pi_{1,1}^-$ for some $m=1,2,\cdots,s$. In this case $\pi_2 = \pi_{1,1}^- \times \pi_2^{\prime}$ where, $\pi_2^{\prime} = \pi_{2,1} \times\cdots \times \pi_{2,m-1} \times \pi_{2,m+1} \times \cdots \times \pi_{2,s}$. In this case, the representations $\pi_{1}^{\prime}$ and $\pi_2^{\prime}$ are strong $\Ext$ relevant.
    \item [(B)] $\pi_{2,m} = D(\pi_{1,1}^-)$ for some $m=1,2,\cdots,s$. In this case $\pi_2 = D(\pi_{1,1}^-) \times \pi_2^{\prime\prime}$ where, $\pi_2^{\prime\prime} = \pi_{2,1} \times\cdots \times \pi_{2,m-1} \times \pi_{2,m+1} \times \cdots \times \pi_{2,s}$. In this case, the representations $\pi_{1}^{\prime}$ and $\pi_2^{\prime\prime}$ are strong $\Ext$ relevant.
\end{enumerate}

Let us suppose that Case (A) above occurs, that is, $\pi_2 = \pi_{1,1}^- \times \pi_2^{\prime}$, and the representations $\pi_{1}^{\prime}$ and $\pi_2^{\prime}$ are strong $\Ext$ relevant. Since $\pi_{1}^{\prime}$ and $\pi_2^{\prime}$ are strong $\Ext$ relevant we have that the representations $\sigma\times \pi_{1}^{\prime}$ and $\pi_2^{\prime}$ are strong $\Ext$ relevant. By the induction hypothesis we conclude that, \[ \Ext^*_{\G_a}(\sigma\times \pi_{1}^{\prime},\pi_2^{\prime})\neq 0. \] By Lemma \ref{lemma two about extensions between segment type representations} and Equation \ref{equation one in proof of if direction} we conclude that \[ \Ext^i_{\G_{n-1}}(\pi_1,\pi_2)\neq 0 \] for some integer $i\geq 0$.

On the other hand suppose that Case (B) occurs. In this case, $\pi_2 = D(\pi_{1,1}^-) \times \pi_2^{\prime\prime}$, and the representations $\pi_{1}^{\prime}$ and $\pi_2^{\prime\prime}$ are strong $\Ext$ relevant. The proof proceeds exactly as in Case (A) but we invoke Lemma \ref{lemma three about extensions between segment type representations} instead of Lemma \ref{lemma two about extensions between segment type representations}.
\end{proof}

\section{A Final Remark} \label{section nine}

In this section we make a remark about the non-uniqueness of matching of factors in Definition \ref{definition of strong ext relevance} of strong Ext relevance. We first discuss the uniqueness of matching in the context of the original definition of relevance in the work \cite{ggp20}.

Let $m, n \in \mathbb{Z}_{\geq 0}$ and let $\pi_1$ and $\pi_2$ be Arthur type representations of $\GL_m(F)$ and $\GL_n(F)$ respectively. Let $\A(\pi_1)$ and $\A(\pi_2)$ denote their respective Arthur parameters. Recall that in the work \cite{ggp20}, the authors define a pair ($\pi_1$,$\pi_2$) to be relevant if there exist admissible homomorphisms $\{\phi_i\}_{i=1}^{i=r+s}$ of $W_F$ with bounded image and positive integers $a_1,a_2,\cdots a_r, b_{r+1},b_2,$ $\cdots b_{r+s},$ $ c_1,c_2,\cdots c_{r+s}$ such that,
\[ \A(\pi_1) = \sum_{i=1}^{r} \phi_i \otimes V_{c_i} \otimes V_{a_i} \oplus \sum_{i=r+1}^{r+s} \phi_i \otimes V_{c_i} \otimes V_{b_i - 1} \] and, \[ \A(\pi_2) = \sum_{i=1}^{r} \phi_i \otimes V_{c_i} \otimes V_{a_i-1} \oplus \sum_{i=r+1}^{r+s} \phi_i \otimes V_{c_i} \otimes V_{b_i}. \]

The decomposition of a relevant pair of representations into components in such a manner as above is unique. We refer to the discussion before Lemma 3.1 in \cite{ggp20} for the proof. Thus given a relevant pair $(\pi_1,\pi_2)$ corresponding to each irreducible term of the form $\phi \otimes V_{a} \otimes V_{b}$ in the decomposition $\A(\pi_1)$, there exists a unique term of the form $\phi \otimes V_{a} \otimes V_{b+1}$ or $\phi \otimes V_{a} \otimes V_{b-1}$ in $\A(\pi_2)$ matching to it. However as the next example shows we do not have the uniqueness of matching of factors in Definition \ref{definition of strong ext relevance} of strong Ext relevance. 

Consider the representations $\pi_1$ and $\pi_2$ of $\G_{13}$ and $\G_{12}$ respectively whose A-parameters as given as follows. \[ \A(\pi_1) = \mathbbm{1} \otimes V_{1} \otimes V_{7} \oplus \mathbbm{1} \otimes V_{5} \otimes V_{1} \oplus \phi \otimes V_{1} \otimes V_{1}, \]   \[ \A(\pi_2) = \mathbbm{1} \otimes V_{1} \otimes V_{6} \oplus \mathbbm{1} \otimes V_{6} \otimes V_{1},\] where, $\phi$ corresponds to a ramified character $\chi$ of $\G_1$. Note that $\pi_1$ and $\pi_2$ are strong Ext relevant. Then one may match the terms in the above decomposition in two different ways.
\begin{enumerate}
    \item The term $\mathbbm{1} \otimes V_{1} \otimes V_{7}$ matches with the term $\mathbbm{1} \otimes V_{1} \otimes V_{6}$ by decreasing the dimension of the representation of the Arthur $\SL_2$ by $1$. The term $\mathbbm{1} \otimes V_{5} \otimes V_{1}$ becomes zero when the dimension of the representation of the Arthur $\SL_2$ is decreased by $1$. Similarly the term $\mathbbm{1} \otimes V_{6} \otimes V_{1}$ becomes zero when the dimension of the representation of the Arthur $\SL_2$ is decreased by $1$.
    \item The term $\mathbbm{1} \otimes V_{1} \otimes V_{7}$ matches with the term $\mathbbm{1} \otimes V_{6} \otimes V_{1}$ by decreasing the the dimension of the representation of the Arthur $\SL_2$ by $1$ and then applying the Aubert-Zelevinsky duality. The term $\mathbbm{1} \otimes V_{5} \otimes V_{1}$ matches with the term $\mathbbm{1} \otimes V_{1} \otimes V_{6}$ (we first decrease the dimension of the representation of the Arthur $\SL_2$ of $\mathbbm{1} \otimes V_{1} \otimes V_{6}$ by $1$ and then apply the Aubert-Zelevinsky duality to obtain the term $\mathbbm{1} \otimes V_{5} \otimes V_{1}$).
\end{enumerate}
In both cases above the term $\phi \otimes V_{1} \otimes V_{1}$ becomes zero when the dimension of the representation of the Arthur $\SL_2$ is decreased by $1$. The above example shows that the decomposition of a strong Ext relevant pair $(\pi_1,\pi_2)$ as in Definition \ref{definition of strong ext relevance} is not unique.

We may explain this example in representation theoretic terms as follows. Recall that given an irreducible representation $\pi$ of $\GL_n(F)$, we we set $\pi^{-}= \nu^{1/2}\pi^{(h)}$ where, $\pi^{(h)}$ denotes the highest derivative of $\pi$. 

Let $\rho$ denote the trivial representation of $\G_1$. In the example above we have $\pi_1 = u_{\rho}(1,7) \times u_{\rho}(5,1) \times \chi$ and $\pi_2 = u_{\rho}(1,6) \times u_{\rho}(6,1)$. One may decompose $\pi_1$ and $\pi_2$ as, \begin{align*}  \pi_1 &= u_{\rho}(1,7) \times u_{\rho}(5,1) \times \chi \times u_{\rho}(6,1)^-  \\ \pi_2 &= u_{\rho}(1,7)^- \times u_{\rho}(5,1)^- \times \chi^- \times u_{\rho}(6,1). \end{align*} This corresponds to the decomposition in the first case above. 

On the other hand we may decompose $\pi_1$ and $\pi_2$ as, \begin{align*} \pi_1 &= u_{\rho}(1,7) \times D(u_{\rho}(1,6)^-) \times \chi \\ \pi_2 &= D(u_{\rho}(1,7)^-) \times u_{\rho}(1,6) \times \chi^-. \end{align*} This corresponds to the decomposition in the second case above.

We remark that a computation of the Ext branching for the above example is carried out in \cite[Example 7.6]{ch22}. We have that, \[ \Ext^i_{\G_{12}}(\pi_1,\pi_2) = \Ext^i_{\G_{11}}(\nu^{1/2}\pi_1^{(2)},\pi_2^{(1)}) \oplus \Ext^i_{\G_{6}}(\nu^{1/2}\pi_1^{(7)},\pi_2^{(6)}). \] This is an instance of a general conjecture about Ext branching for Arthur type representations made in \cite[Conjecture 7.1]{ch22}.

\section*{Acknowledgements} The author would like to thank Professor Dipendra Prasad for his comments on the paper and for helpful discussions especially on Definition \ref{definition of strong ext relevance} of strong $\Ext$ relevance. The author would also like to thank Professor K.Y. Chan for helpful communications through Professor Dipendra Prasad. The author thanks the referees for their suggestions which have improved the paper. This work was supported by an Institute Fellowship of the Indian Institute of Technology, Bombay.


\vskip 15pt

\end{document}